\documentclass[reqno,10.5pt]{amsart}
\usepackage{a4,latexsym,amssymb,amsfonts}

\usepackage{amsmath,amscd} 
\usepackage{mathrsfs}

\usepackage{pdfsync,verbatim} 

\usepackage{enumerate}
\usepackage{amsthm}
\usepackage[english]{babel}

\usepackage{color}
\usepackage{cite}

\usepackage{graphicx}


\textwidth16cm
\textheight22cm
\evensidemargin.2cm
\oddsidemargin.2cm

\addtolength{\headheight}{3.2pt}    




\def\cA{{\mathcal A}}

\def\cD{{\mathcal D}}

\def\cF{{\mathcal F}}
\def\cH{{\mathcal H}}

\def\cN{{\mathcal N}}

\def\cP{{\mathcal P}}

\def\cS{{\mathcal S}}
\def\cU{{\mathcal U}}
\def\cW{{\mathcal W}}


\def\bH{{\mathbf H}}

\def\bU{{\mathbf U}}
\def\bZ{{\mathbf Z}}


\newcommand{\bbC}{{\mathbb{C}}}
\newcommand{\bbR}{{\mathbb{R}}}

\newcommand{\bbZ}{{\mathbb{Z}}}

\def\Re{\operatorname{Re}}
\def\Im{\operatorname{Im}}
\def\Res{\operatorname{Res}}
\def\arctanh{\operatorname{arctanh}}

\def\la{\langle}
\def\ra{\rangle}
\def\p{\partial}

\def\eps{\varepsilon}

\def\z{\zeta}

\def\raw{\longrightarrow}

\def\sW{\mathcal{W}}

 \def\HollowBox #1#2{{\dimen0=#1 \advance\dimen0 by -#2       
       \dimen1=#1 \advance\dimen1 by #2                       
        \vrule height #1 depth #2 width #2                    
        \vrule height 0pt depth #2 width #1                   
        \llap{\vrule height #1 depth -\dimen0 width \dimen1}%
       \hskip -#2                                             
       \vrule height #1 depth #2 width #2}}


\newtheorem{Thm}{Theorem}
\newtheorem{thm}{Theorem}[section]
\newtheorem{prop}[thm]{Proposition}
\newtheorem{cor}[thm]{Corollary}
\newtheorem{lem}[thm]{Lemma}

\begin{document}

\title[Bergman kernel on the unbounded worm]{Bergman kernel and projection on the unbounded
  Diederich--Forn\ae ss worm domain} 

\author[S. G. Krantz]{Steven G. Krantz}
\address{Campus Box 1146
Washington University in St. Louis
St. Louis, Missouri 63130}
\author[M. M. Peloso]{Marco M. Peloso}
\address{Dipartimento di Matematica ``F. Enriques''\\
Universit\`a degli Studi di Milano\\
Via C. Saldini 50\\
I-20133 Milano}
\author[C. Stoppato]{Caterina Stoppato}
\address{Istituto Nazionale di Alta Matematica\\
Unit\`a di Ricerca di Firenze c/o
DiMaI ``U. Dini'' Universit\`a di Firenze\\
Viale Morgagni 67/A\\
I-50134 Firenze\bigskip}

\email{sk@math.wustl.edu}
\email{marco.peloso@unimi.it}
\email{stoppato@math.unifi.it}
\thanks{Second author supported in part by the 2010-11 PRIN grant
  \emph{Real and Complex Manifolds: Geometry, Topology and Harmonic Analysis}  
  of the Italian Ministry of Education (MIUR)}
\thanks{Third author supported by the FIRB grant \emph{Differential
    Geometry and Geometric Function Theory} of the MIUR} 
\keywords{Bergman kernel, Bergman projection,
worm domain.}
\subjclass[2000]{32A25, 32A36}

\date{\today}

\begin{abstract}
In this paper we study the Bergman kernel and projection on the unbounded
 worm domain
$$
\cW_\infty = \big\{ (z_1,z_2)\in\bbC^2:\,
\big|z_1-e^{i\log|z_2|^2}\big|^2<1 ,\ z_2\neq0\big\} \, .
$$
We first show that the Bergman space of $\cW_\infty$ is infinite dimensional.
Then we study Bergman kernel $K$ and Bergman projection
$\cP$ for $\cW_\infty$.
 We prove that $K(z,w)$ extends holomorphically  in $z$ (and 
 antiholomorphically in $w$) near each point of the boundary except for a 
 specific subset that we study in detail.
By means of an appropriate asymptotic expansion for $K$, we prove that 
the Bergman projection $\cP:W^s\not\to W^s$ if $s>0$ and 
$\cP:L^p\not\to L^p$ if $p\neq2$, where $W^s$
denotes the classic Sobolev space, and $L^p$ the Lebesgue space,
respectively,  on $\cW_\infty$.
\end{abstract}

\maketitle

\section*{Introduction}\label{intro}
\vskip12pt

In this paper we study the Bergman kernel and projection on the unbounded
 domain
\begin{equation}\label{unbdd-worm}
\cW_\infty = \big\{ (z_1,z_2)\in\bbC^2:\,
\big|z_1-e^{i\log|z_2|^2}\big|^2<1 ,\ z_2\neq0\big\}
\end{equation}
(see Figure 1).
\begin{figure}[t]
  \begin{center}
  \includegraphics[height=8cm]{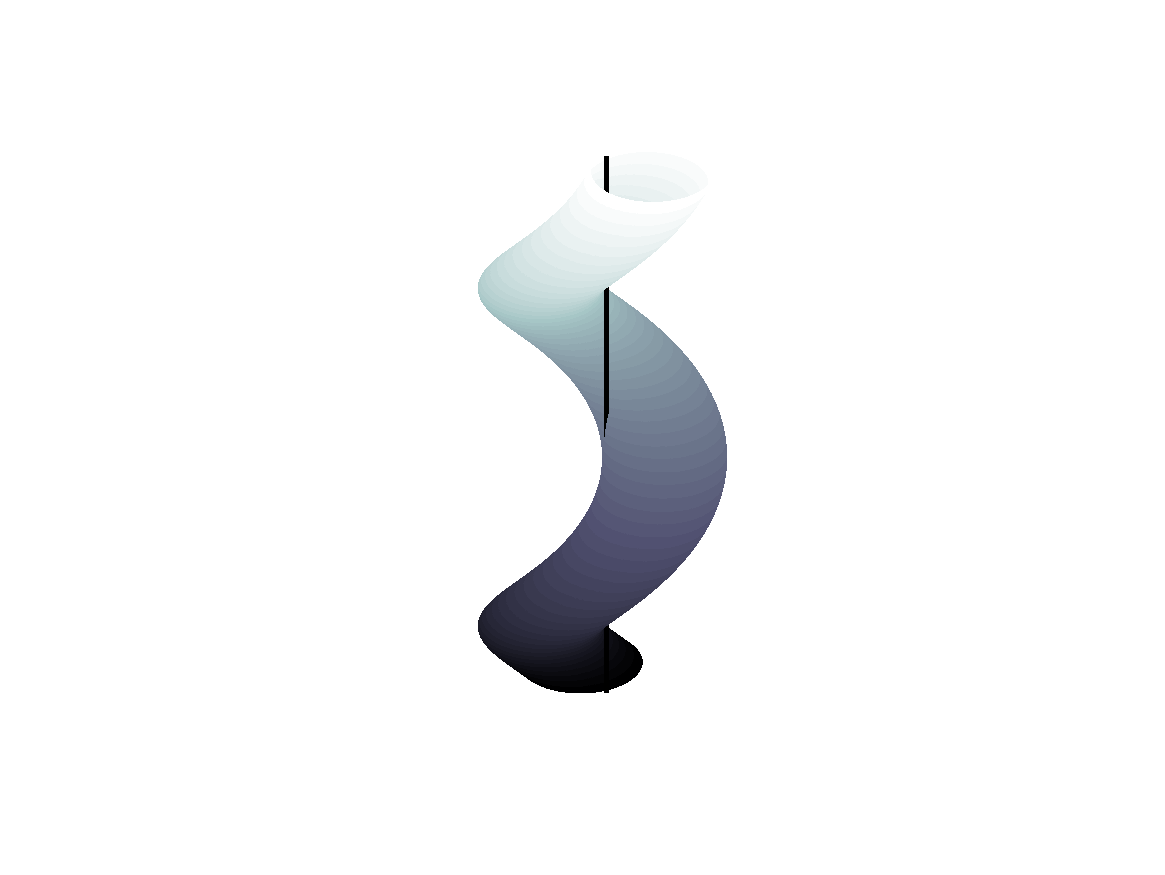}
\end{center}
  \caption{\footnotesize  A portrait in $\bbC \times \bbR$ of a
    section of $\cW$ . The first variable $z_1$ spans in the
    horizontal plane $\bbC$, while $\log|z_2|^2$ spans along the
    vertical line $\bbR$ (drawn in black).} 
\end{figure}
Recall that, for $\mu>0$, the Diederich--Forn\ae ss worm domain
$\cW_\mu$ is defined by
\begin{equation}\label{bdd-worm}
\cW_\mu = \big\{ (z_1,z_2)\in\bbC^2:\,
\big|z_1-e^{i\log|z_2|^2}\big|<1-\eta\big(\log|z_2|^2\big)\big\} \, ,
\end{equation}
where $\eta$ is a smooth, even, convex, non-negative function on the real 
line, chosen so that $\eta^{-1}(0)=[-\mu,\mu]$ and so that $\cW_\mu$ is
bounded, smooth, and pseudoconvex.  Its boundary is strongly 
pseudoconvex except at the points $\big\{ (0,z_2):\, \big|\log |z_2|^2\big|\le
\mu\big\}$.  The worm domain $\cW_\mu$ was introduced in \cite{DFo1} by
K. Diederich and J. E. Forn\ae ss and
turned out to be of great interest as it
provides ({\em counter-})examples for many important phenomena.   

Diederich and Forn\ae ss showed that 
the worm
is the first example of
a smoothly bounded domain with nontrivial Nebenh\"{u}lle.
Moreover, it gives an example of a smoothly
bounded, pseudoconvex domain which lacks a global
plurisubharmonic defining function.
Furthermore, nearly 15 years after its introduction, the worm domain showed 
another feature that is of great interest.
In order to describe this feature of $\cW_\mu$ 
and to motivate our present work on 
$\cW_\infty = \bigcup_{\mu>0} \cW_\mu$, let us first recall some preliminary 
material concerning the Bergman space of a complex domain and the associated 
Bergman projection, as well as its role in the study of the geometry of the domain.

\bigskip

If $\Omega$ is a given domain in $\bbC^n$, denote by
$A^2(\Omega)$ the space of 
holomorphic functions on $\Omega$ that are square integrable with respect to
Lebesgue measure.   Then,
$A^2(\Omega)$ is a closed subspace of $L^2(\Omega)$ and the Hilbert
space projection 
$$
P: L^2(\Omega) \raw A^2(\Omega)
$$
can be represented by an integration formula
$$
P f(z) = \int_\Omega K(z,\zeta) f(\zeta) \, dV(\zeta) \, .
$$
The kernel $K(z,\zeta) = K_\Omega(z,\zeta)$ is called the {\it Bergman
  kernel}.  There exists a vast literature on the Bergman kernel and
projection, and their role in geometric analysis in one and several
variables; here we only mention\cite{ChSh}, \cite{Kr1} and
\cite{Straube-lectures} for
the basic ideas and a general overview.

Clearly the Bergman projection $P$ is bounded on $L^2(\Omega)$. 
Its regularity, or irregularity, in other norms or more general topologies
is of great interest.

When $\Omega$ is assumed
to be smooth, bounded and pseudoconvex, S. R. Bell \cite{Bel1} 
formulated the notion of 
{\it Condition $R$}, that is the requirement that 
$P: C^\infty(\overline{\Omega}) \to
C^\infty(\overline{\Omega})$ is bounded.  
The work of Bell and that of Bell and E. Ligocka \cite{BelLi} led to the following 
fundamental result: if $\Phi: \Omega_1 \raw \Omega_2$ is a 
biholomorphic mapping between smoothly bounded, 
pseudoconvex domains of $\bbC^n$, one of which satisfies Condition 
$R$, then  $\Phi$ extends to be a $C^\infty$
diffeomorphism of $\overline{\Omega}_1$ to $\overline{\Omega}_2$.

Many different classes of domains are known to satisfy
Condition $R$: e.g., strongly pseudoconvex 
domains and domains of finite type, domains with 
real-analytic 
boundary, complete Hartogs domains in $\bbC^2$, 
domains that admit a defining function that is plurisubharmonic on
the boundary, see 
\cite{Cat1}, \cite{Cat2}, \cite{DFo2}, \cite{BoStraube} and
\cite{Bo-St-pluri}, respectively. 
On the other hand, considerable 
effort has been put into the search for examples of domains that 
do not satisfy Condition $R$. Among the first works on this matter we 
might mention \cite{Ba1}, where D. Barrett  showed that
there exists a smoothly bounded, non-pseudoconvex  
domain $\Omega$ in $\bbC^2$ on which Condition $R$ fails.
In particular, Barrett's work provides some insight on the problem 
caused by rapidly varying normals to the boundary; see also\cite{Ba2}.

Clearly, one way to try to measure whether a domain $\Omega$ 
satisfy or not Condition $R$ is to determine the Sobolev regularity of 
$P$; namely, whether or not, for $s > 0$, the projection $P$ preserves 
the Sobolev space $W^s(\Omega)$ (see, e.g.  \cite{Hor1}, \cite{Kr2}).   
In this direction, J. J. Kohn \cite{Kohn-quantitative} and B. Berndtsson 
and P. Charpentier \cite{Be-Ch} proved (independently and with 
completely different approaches) that for each smooth
bounded pseudoconvex domain $\Omega$ in $\bbC^n$ there exists
$s_\Omega>0$ such that $P: W^s(\Omega) \to W^s(\Omega)$ is 
bounded for $0<s<s_\Omega$.  
In \cite{Be-Ch} is it shown that $s_\Omega\ge \text{DF}(\Omega)/2$, 
where $\text{DF}$ denotes the Diederich-Forn\ae ss exponent 
of the given domain $\Omega$
\begin{multline}\label{DF-def}
\text{DF}(\Omega)= 
\sup\big\{\, 0<\delta\le1:\, \exists\text{ defining function } 
\varrho \text{ for } \Omega,\, -(-\varrho)^\delta\text{\
  plurisubharmonic on\ } \p\Omega \big\}\, .
\end{multline}
The lower bound obtained in \cite{Kohn-quantitative} is not explicit; 
one way to obtain such a lower bound is described in \cite{Pi-Za}.

An alternative method to establish regularity  is
via the Neumann operator $\cN$, that is,
the solution operator of the complex Laplacian
$\Box=\bar\partial\bar\p^*+\bar\p^*\bar\p$ 
on square-integrable $(0, 1)$-forms. In fact 
H. P. Boas and E. J. Straube \cite{BoS2} established
a connection between regularity of $\cN$ and $P$; see also 
\cite{Straube-lectures} and the references therein.

Another interesting result in this context is \cite{He-Mc-St} where
A.-K. Herbig, J. D. McNeal and Straube address the 
problem of studying on which
subspace of $C^\infty(\overline{\Omega})$ the Bergman projection 
is bounded as a map into $C^\infty(\overline{\Omega})$.

\bigskip

Consider now the worm domain $\cW_\mu$. 
Let
$\cP_\mu$ denote the Bergman projection on $\cW_\mu$ and set
$\nu=\pi/(2\mu)$. 
Boas and Straube \cite{Bo-St-Hartogs} showed that
the Bergman projection on $\cW_\mu$ maps $W^k$ into itself if 
$k$ is an integer and $k\ge\nu$, or if $k=\frac12$.
Furthermore, the result of \cite{Be-Ch} 
applies to $\cW_\mu$ so that $W^s$ must be preserved by 
$\cP_\mu$ for all $s < \text{DF}(\cW_\mu)/2$. We point out, though, 
that in \cite{DFo1} Diederich and Forn\ae ss showed 
that $\text{DF}(\cW_\mu)\le\nu$, 
(see also \cite{KrPe} for details).
\medskip

In the direction of understanding {\em irregularity} of the Bergman
projection, 
it was C. O. Kiselman \cite{Ki} who established an important
connection between the worm domain and Condition $R$.  He 
proved that, for a certain non-smooth version of the worm, a 
form of Condition $R$ fails.

 Stemming from the ideas developed in \cite{Ki}, 
in \cite{Barrett-Acta} Barrett proved the 
ground-breaking fact that
\begin{itemize}
\item[(i)]
$\cP_\mu: W^s(\cW_\mu)\not\to W^s(\cW_\mu)$ when $s\ge \nu$;\smallskip
\end{itemize}
where $W^s(\cW_\mu)$ denotes the standard Sobolev space.  By the same
proof, see also \cite{KrPe}, it also follows that
\begin{itemize}
\item[(ii)]
$\cP_\mu: L^p(\cW_\mu)\not\to L^p(\cW_\mu)$
for $\big|\frac1p-\frac12\big|\ge \nu/2$.
\medskip
\end{itemize}
Based on Barrett's result on the irregularity of $\cP_\mu$, the work of M. 
Christ \cite{Chr} showed that the worm domain is a counterexample to
Condition $R$. 
 After decomposing the space of square-integrable $(0,1)$-forms as 
$L^2_{(0,1)}(\cW_\mu) = \oplus_{j \in \bZ}\cH_j^1$, where
$\cH^1_j
= \big\{ u \in L^2_{(0,1)}(\cW_\mu) : \ u(w_1,e^{i\theta}w_2) =
e^{ij\theta} u(w_1,w_2)\big\}$, 
he showed that for all $s>0$ (apart from a discrete set of
exceptions) the Neumann operator $\cN$ satisfies an a priori estimate 
$||\cN u||_{W^s} \leq C_{s,j} ||u||_{W^s}$ 
valid for every $u \in \cH_j^1 \cap C^\infty(\overline{\cW}_\mu)$ 
such that $\cN u \in C^\infty(\overline{\cW}_\mu)$.
If $\cN:
C^\infty(\overline{\sW}_\mu)\to
C^\infty(\overline{\sW}_\mu)$ were bounded, such estimates 
would 
contradict the irregularity of $\cP_\mu$.

\bigskip

The peculiar properties of the worm domain $\cW_\mu$ have 
already earned it considerable attention as a counterexample to 
many important phenomena  and they motivate a deeper 
study of the Bergman space of $\cW_\mu$. 
This study is extremely challenging: for instance, writing down a 
basis or even a complete system for 
$A^2(\cW_\mu)$ is still an open problem. As a step towards the 
study of $\cW_\mu$, in this paper we study the unbounded 
worm domain $\cW_\infty$ defined in \eqref{unbdd-worm}, which 
can be thought of as the limit of the smoothly bounded worm 
domains $\cW_\mu$ as  $\mu\to+\infty$. This makes it an easier 
domain to study than the original $\cW_\mu$, as we are about to see.
We will explain in our Concluding Remarks how the technique applied 
here may shed some light on the study of the original smoothly 
bounded worm domains.
For simplicity of notation, we are going to write $\cW$ 
instead of $\cW_\infty$  and $\cP$ for $\cP_\infty$ in
the remainder of this paper. 
\medskip 
  
The domain $\cW$ is clearly unbounded. Denote by $\p\cW$
its boundary.  It is well-known  (see \cite{FrGr}, \cite{ChSh}, 
and the next section for details) that 
\begin{itemize}
\item $\cW$ is pseudoconvex; 
\item $\p\cW$ is smooth except at the points 
$\cN:=\{ (z_1,0):\, |z_1|\le2\}$;
\item $\cW$ has nontrivial Nebenh\"ulle;
\item the smooth part of $\p\cW$ is strongly pseudoconvex except 
at the points of the critical annulus $\cA:=\{ 0\}\times\bbC^*$.
\end{itemize}
Here, and in what follows, $\bbC^*=\bbC\setminus\{0\}$. \medskip 

In this work we first show that the Bergman space of $\cW$ is not
trivial, showing in particular that it is infinite dimensional. 
Then we consider a biholomorphically equivalent domain 
$\cU$ that we call the {\em unwound} worm, which is also 
unbounded, but has the property that the fibers in the second 
component, that is the sets
$\{ z_2\in\bbC:\, (z_1,z_2)\in \cW\}$,
are connected. This allows us to reduce our study to a family of 
weighted Bergman spaces $\{A^2(\bU, \alpha_j)\}_{j \in \bbZ}$ on 
the upper half-plane $\bU$ and to the corresponding kernels 
$\{K_j\}_{j \in \bbZ}$. At each point of $\bU \times \bU$, we 
compute the value of $K_j$  as $\widehat \phi_\lambda(j+1)$, 
where: $\lambda$ is a number in the right half-plane 
$\bH$, associated to the given point of $\bU \times \bU$; and 
$\widehat \phi_\lambda$ denotes the Fourier transform of the 
function
\begin{equation*}
\phi_\lambda(s) = \frac{1}{2\pi^3} \frac{1}{\cosh^2 s}\Big[
\big(2\log(\cosh s)+\lambda\big)^{-2} + 4 \big( 2\log(\cosh
s)+\lambda\big)^{-3}
\Big] \, .
\end{equation*}
Altogether, we express the Bergman kernel $K$ of $\cW$ as a 
series of functions, each of which is explicitly computed in 
terms of the aforementioned $K_j$.

By means of this machinery, we prove that $K(z,w)$ 
extends holomorphically in $z$ (and antiholomorphically in $w$) 
near each point of the boundary except for a specific subset, which 
includes the critical set $(\cA\times \cW) \cup (\cW \times \cA)$. 
We then find an asymptotic expansion for $K$ near the critical set 
that allows us to prove that 

\begin{Thm}\label{irregularity}	  
For all $s>0$, the Bergman projection $\cP$ does not map the
Sobolev space $W^s(\cW)$ into itself; nor does it map
$L^p(\cW)$ into itself for any $p$ other than $2$.  
\end{Thm}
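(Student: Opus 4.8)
The plan is to follow the classical strategy of Barrett and Kiselman, adapted to the unbounded setting. The key idea is that irregularity of $\cP$ is detected by testing it against a well-chosen family of $L^2$ forms (or functions) that are, in fact, smooth up to the boundary and compactly supported away from the non-smooth points $\cN$, but whose images under $\cP$ fail to be as regular. Concretely, the asymptotic expansion of $K$ near the critical annulus $\cA$ (which we have available from the preceding analysis) shows that $K(z,w)$ has a singularity of a specific order as $(z,w)$ approaches a point of $(\cA\times\cW)\cup(\cW\times\cA)$; the strength of this singularity is exactly what obstructs Sobolev and $L^p$ boundedness.

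First I would reduce, via the biholomorphism to the unwound worm $\cU$ and the decomposition into the weighted Bergman spaces $A^2(\bU,\alpha_j)$, to a statement about the weighted kernels $K_j$ on the upper half-plane. The point is that $L^p$-boundedness of $\cP$ on $\cW$ would force a uniform control on the family $\{K_j\}$ (with weights), and Sobolev boundedness would force control of derivatives; the explicit formula $K_j = \widehat{\phi_\lambda}(j+1)$ in terms of the Fourier transform of $\phi_\lambda$ lets me read off the precise decay/growth in $j$ and the precise behavior in $\lambda$ as $\lambda$ tends to the imaginary axis (which corresponds to approaching the critical set). Then I would exhibit an explicit test function: take $f$ supported in a single component $\cH_j$ (equivalently, a monomial in $z_1$ times a nice function of $z_2$), chosen so that $f$ is smooth up to $\p\cW$ and decays at infinity, but so that $\cP f$, computed through $K_j$, picks up the singular factor near $\cA$. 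Estimating $\|f\|_{W^s}$ from above and $\|\cP f\|_{W^s}$ (or the $L^p$ analogues) from below, using the asymptotic expansion, gives the desired failure of boundedness for every $s>0$ and every $p\neq2$.

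For the $L^p$ statement specifically, I would use the standard duality/interpolation packaging: if $\cP$ were bounded on $L^p$ for some $p>2$, it would be bounded on $L^{p'}$ with $p'<2$ as well (since $\cP$ is self-adjoint), and then by interpolation on a whole interval around $2$; so it suffices to rule out boundedness on $L^p$ for $p$ arbitrarily close to $2$, which again follows from the singular integral estimate once the kernel asymptotics are in hand. For the Sobolev statement, the observation (as in Barrett's work) is that the worm's critical annulus is non-compact in the relevant sense — here, genuinely so, since $\cA = \{0\}\times\bbC^*$ is unbounded — so the singularity cannot be absorbed no matter how small $s>0$ is; differentiating the test function and its image $\cP f$ along the $z_2$-direction transverse to $\cA$ amplifies the disparity in a way that scales with $s$.

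The main obstacle I expect is making the test-function construction simultaneously (a) genuinely in $L^2(\cW)$ and smooth up to the smooth part of the boundary, (b) supported away from the non-smooth locus $\cN$ so that $W^s$ and $L^p$ norms are unambiguous, and (c) concentrated enough near $\cA$ — in the right single Fourier component $\cH_j$ — that $\cP f$ inherits the full strength of the kernel singularity rather than having it cancelled. Controlling the interplay between the sum over $j$ and the behavior of $K_j$ near the imaginary axis, so that one component alone already violates the estimate (which is what lets us conclude for \emph{all} $s>0$, not just large $s$), is the delicate technical heart of the argument; everything else is a careful but routine unwinding of the explicit formulas for $\phi_\lambda$ and $K_j$ established above.
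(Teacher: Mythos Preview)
Your strategy is workable in principle but substantially more elaborate than what the paper actually does, and parts of your description are off-target.

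The paper's argument is much more direct. No delicate test-function construction in a single component $\cH^j$ is needed; the test function is simply a smooth bump $\chi_w\in C^\infty_0(\cW)$ near a fixed interior point $w$, for which $\cP\chi_w = K(\cdot,w)$ (this is a standard fact about reproducing kernels). The whole content of the proof is then the preceding corollary: from the asymptotic expansion
\[
K(z,w)=\frac{H(z,w)}{z_1\overline{w}_1\,z_2\overline{w}_2\,\big(\ell(z)-\overline{\ell(w)}\big)^2}
\]
with $H$ bounded and bounded away from zero on suitable sectors as $z_1\to 0$, one reads off directly that $K(\cdot,w)\notin L^p(\cW)$ for $p>2$ and $K(\cdot,w)\notin W^s(\cW)$ for any $s>0$ (the latter via the Ligocka characterization $W^s\cap\mathrm{Hol}=\{f:\rho^{-s}f\in L^2\}$ for $0<s<\tfrac12$). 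Since $\chi_w$ lies in every $W^s$ and every $L^p$, boundedness of $\cP$ on $W^s$ or on $L^p$ with $p>2$ is immediately ruled out. The case $p<2$ is then handled by the duality computation you sketch, essentially as you say.

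So your proposed machinery---passing to $\cU$, isolating a single $K_j$, building an $\cH^j$-supported test function, tracking the $j$-dependence and the approach of $\lambda$ to the imaginary axis---is not needed here. The ``main obstacle'' you anticipate (items (a)--(c)) simply does not arise: the bump $\chi_w$ trivially satisfies all regularity requirements, and the full kernel $K(\cdot,w)$ already exhibits the singularity. Note also two inaccuracies in your outline: elements of $\cH^j(\cW)$ are not ``monomials in $z_1$ times a nice function of $z_2$'' (the fibers of $\cW$ over $z_1$ are disconnected, which is precisely why one unwinds to $\cU$); and the reason irregularity holds for \emph{all} $s>0$ is not the non-compactness of $\cA$ but rather the specific strength $|z_1|^{-1}(\log|z_1|)^{-2}$ of the kernel singularity along $\cA$, which is borderline $L^2$ and therefore fails every $W^s$, $s>0$, and every $L^p$, $p>2$.
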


We point out again that the domain is unbounded and non-smooth.  
However, the analysis of the singularities of the Bergman kernel 
shows that the irregularity of the projection is caused by the 
pathological behavior of $K(\cdot,w)$ near each point of the 
critical annulus $\cA$, where the boundary of the domain is smooth.
\bigskip

\noindent {\em Acknowledgements}

Part of the work was done while the second author held a visiting 
position at the Department of Mathematical Sciences of the 
University of Arkansas.  He wishes to thank the Faculty and the 
Staff of the Department for the opportunity to work in such a 
stimulating environment.

Preliminary research was done when the third author was a 
fellow at the Department of Mathematics of the University of 
Milan, supported by FSE and by Regione Lombardia. She warmly 
thanks all of these institutions for the remarkable research 
opportunity. 
During part of the same period, she acknowledges partial suppport 
by the PRIN grant 
\emph{Real and Complex Manifolds: Geometry, Topology and Harmonic Analysis}  
  of the MIUR.
She is also grateful to the Department of Mathematical 
Sciences of the University of Arkansas, which she visited during the 
preparation of this paper.

It is a pleasure to thank Harold Boas for useful conversations 
about the Bergman projection.

\medskip

\section{Basic facts about $\cW$ and 
  $\cU$}\label{basic-sec} 
\medskip

We begin with the following well-known result,---see
e.g. \cite{FrGr}.
\begin{prop}\label{basics} \sl
The domain $\cW$ is pseudoconvex and has nontrivial Nebenh\"ulle.
Moreover, the boundary   
 $\p\cW$ is smooth except at the points $\cN=\{ (z_1,0):\,
 |z_1|\le2\}$ and
the smooth part of $\p\cW$ is strongly pseudoconvex except at
  the points of the critical annulus $\cA=\{ 0\}\times\bbC^*$.
\end{prop}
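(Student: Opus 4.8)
Write $\rho(z)=\big|z_1-e^{i\log|z_2|^2}\big|^2-1$, a real-analytic function on $\{z_2\neq0\}$, so that $\cW=\{z_2\neq0,\ \rho<0\}$. The plan is to first pin down $\p\cW$. Since $\partial_{z_1}\rho=\overline{z_1-e^{i\log|z_2|^2}}$ has modulus $\big|z_1-e^{i\log|z_2|^2}\big|=1$ on $\{\rho=0\}$, the real gradient of $\rho$ never vanishes there, so the part of $\p\cW$ lying over $z_2\neq0$ is a real-analytic hypersurface with defining function $\rho$. The remaining boundary points lie over $z_2=0$: as $z_2\raw0$ the center $e^{i\log|z_2|^2}$ sweeps out the whole unit circle, so $(z_1^0,0)$ is a limit of points of $\cW$ exactly when $z_1^0$ lies within distance $1$ of the unit circle, i.e. $|z_1^0|\le2$, which is $\cN$. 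To see $\p\cW$ is \emph{not} smooth along $\cN$, I would note that for fixed $z_1$ with $0<|z_1|<2$ the slice $\{z_2:(z_1,z_2)\in\cW\}$ is a disjoint union of infinitely many annuli accumulating at $z_2=0$; hence no neighborhood of a point of $\cN$ meets $\cW$ in the one-sided, connected way forced by a smooth hypersurface.

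For pseudoconvexity I would use that $\cW=\bigcup_{\mu>0}\cW_\mu$ is an \emph{increasing} union of the bounded pseudoconvex worms $\cW_\mu$ of Diederich--Forn\ae ss (larger $\mu$ lengthens the flat part, enlarging the domain); since an increasing union of pseudoconvex open subsets of $\bbC^2$ is pseudoconvex (Behnke--Stein), this settles the claim at one stroke, uniformly across the singular set $\cN$. As an independent check that also feeds the strong-pseudoconvexity statement, I would compute the Levi form of $\rho$ on the smooth boundary. Passing to $w=z_1e^{-i\log|z_2|^2}$ (so $\rho=|w|^2-2\Re w$ and the boundary becomes $|w-1|=1$), a direct computation of the complex Hessian $(\partial_{z_j}\partial_{\bar z_k}\rho)$ restricted to the complex tangent space $\{\partial_{z_1}\rho\,t_1+\partial_{z_2}\rho\,t_2=0\}$ collapses, after the terms carrying $(\Im w)^2$ cancel, to the single value
\[
L=\frac{|z_1|^2}{|z_2|^2}.
\]

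Since $L\ge0$ everywhere, the smooth boundary is Levi pseudoconvex, consistent with the preceding paragraph; and since $L>0$ precisely when $z_1\neq0$, the smooth boundary is strongly pseudoconvex off $\{z_1=0,\ z_2\neq0\}=\cA$, while on $\cA$ the Levi form vanishes. This establishes the strong-pseudoconvexity assertion.

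The remaining and most delicate point is the Nebenh\"ulle. Here I would argue via the Kontinuit\"atssatz, deciding whether any point of $\bbC^2\setminus\overline{\cW}$ is forced into every pseudoconvex domain containing $\overline{\cW}$ — equivalently, whether $\overline{\cW}$ admits a Stein neighborhood basis. The decisive region is the critical annulus $\cA$, exactly where $L$ degenerates: one tests continuous families of analytic disks whose boundaries lie in $\overline{\cW}$ near $\cA$ while their interiors probe the exterior, exploiting the infinitely many accumulating annular slices found above. Showing that no exterior point is so forced, and producing the corresponding shrinking family of pseudoconvex neighborhoods of $\overline{\cW}$, is the main obstacle; this disk analysis is precisely what yields the Nebenh\"ulle assertion of the proposition.
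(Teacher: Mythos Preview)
The paper does not prove this proposition; it is stated as well-known with a reference to \cite{FrGr}, so there is no ``paper's proof'' to compare against beyond noting that your argument supplies what the paper omits. Your treatment of boundary smoothness, of pseudoconvexity via the increasing union $\cW=\bigcup_{\mu>0}\cW_\mu$ together with Behnke--Stein, and of the Levi form on the smooth boundary (the value $L=|z_1|^2/|z_2|^2$ is correct, with the $(\Im w)^2$ cancellation exactly as you describe) is sound and handles those three assertions cleanly.

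The Nebenh\"ulle paragraph, however, has a genuine problem, and it is rooted in a typo in the statement itself: the proposition should read \emph{nontrivial} Nebenh\"ulle, exactly as the paper's own Introduction asserts a few paragraphs earlier, and as is the whole point of the Diederich--Forn\ae ss construction. You have set out to prove the wrong claim, and your sketch of a Stein neighborhood basis is in any case only a plan (``showing that no exterior point is so forced\dots is the main obstacle''), not an argument. The correct direction is the classical one: any pseudoconvex open $\Omega\supset\overline{\cW}$ contains a tube $\{|z_1|<\delta\}\times\{a<|z_2|<b\}$ about a portion of the critical annulus $\cA$, together with all the closed disks $\overline{\Delta(e^{i\log|z_2|^2},1)}\times\{z_2\}$; sliding $\log|z_2|^2$ through an interval of length exceeding $\pi$ and applying the Kontinuit\"atssatz to the resulting continuous family of analytic disks in the $z_1$-variable forces $\Omega$ to contain points with small nonzero $z_1$ that lie outside $\overline{\cW}$. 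That is the disk analysis one actually carries out here, and it yields the opposite conclusion to the one you were aiming for.
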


We write $\Delta(\z,r)$ to denote the disk of center $\z$ and radius
$r$  in $\bbC$ and $\bH$ to denote the right half-plane
in the complex plane. 
Observe that
\begin{align*}
\cW = \bigcup_{z_2 \in \bbC^*} \Delta(e^{i\log|z_2|^2},1)
\times\{z_2\}\, . 
\end{align*}
In particular, the projection of $\cW$ onto the first variable is 
$\Delta(0,2) \setminus \{0\}$.

We denote by $\log\z$ the principal branch of logarithm for
$\z\in\bbC\setminus(-\infty,0]$ and use it to define some useful 
functions on $\cW$.

\begin{lem}  \sl
Setting
\begin{equation}\label{def-L}
L(z) = \log\big(z_1e^{-i\log|z_2|^2}\big)+i\log|z_2|^2
\end{equation}
defines a complex-valued holomorphic function in the variable
$z=(z_1,z_2)\in \bbC^2$ on the domain $\cD=
\bigcup_{z_2 \in \bbC^*} \{e^{i\log|z_2|^2}\bH\} \times \{z_2\} \subset
\bbC^2$. The same is true for 
\begin{equation}\label{def-E}
E_\eta(z) := e^{\eta L(z)} = \big(z_1e^{-i\log|z_2|^2}\big)^\eta e^{i\eta\log|z_2|^2}
\end{equation}
for each $\eta \in \bbC$.
\end{lem}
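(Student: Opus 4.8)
The plan is to establish holomorphy of $L$ by a direct computation of $\dbar L$, one variable at a time, and then to read off the assertion about $E_\eta$ as an immediate consequence.

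First I would record the two elementary points that $\cD$ is open in $\bbC^2$ and that $L$ is unambiguously defined there. A point $(z_1,z_2)$ lies in $\cD$ precisely when $z_2\neq0$ and $\Re\big(z_1e^{-i\log|z_2|^2}\big)>0$, and this is an open condition because the map $(z_1,z_2)\mapsto\Re\big(z_1e^{-i\log|z_2|^2}\big)$ is continuous on $\bbC\times\bbC^*$. On $\cD$ the argument $z_1e^{-i\log|z_2|^2}$ of the logarithm lies in the right half-plane $\bH\subset\bbC\setminus(-\infty,0]$, on which the principal branch $\log$ is defined and holomorphic; and since $|z_2|^2$ is a positive real number, $\log|z_2|^2$ is an unambiguous real number. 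Hence $L$ is a well-defined single-valued function, and as a composition of $C^\infty$ maps it is $C^\infty$ on $\cD$.

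The core of the argument --- and the only place where anything is at stake, since $L$ contains the manifestly non-holomorphic quantity $\log|z_2|^2$ --- is the identity $\dbar L=0$ on $\cD$. Fixing $z_2$, the number $t:=\log|z_2|^2$ is a real constant, so $z_1\mapsto L(z_1,z_2)=\log\big(z_1e^{-it}\big)+it$ is holomorphic, being the composition of the holomorphic map $z_1\mapsto z_1e^{-it}$ (whose image lies in $\bH$) with the principal logarithm; thus $\p L/\p\bar z_1=0$. For the second variable I would set $g(z):=z_1e^{-i\log|z_2|^2}$, so that $L=\log g+i\log|z_2|^2$, use $\p\big(\log|z_2|^2\big)/\p\bar z_2=\p\big(\log(z_2\bar z_2)\big)/\p\bar z_2=1/\bar z_2$, note that therefore $\p g/\p\bar z_2=-i\,g/\bar z_2$, and conclude
\begin{equation*}
\frac{\p L}{\p\bar z_2}=\frac{1}{g}\,\frac{\p g}{\p\bar z_2}+\frac{i}{\bar z_2}=-\frac{i}{\bar z_2}+\frac{i}{\bar z_2}=0\, .
\end{equation*}
In other words, the non-holomorphic contribution of $\log|z_2|^2$ hidden inside the logarithm is cancelled exactly by the $+i\log|z_2|^2$ term; this cancellation is the main (and essentially the only) obstacle in the proof. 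Since $L$ is $C^1$ on the open set $\cD$ and satisfies $\dbar L=0$, it is holomorphic on $\cD$. (One can alternatively see this by splitting $\log|z_2|^2=\log z_2+\log\bar z_2$ locally, which formally rewrites $L$ as $\log\big(z_1z_2^{-i}\big)+i\log z_2$, visibly holomorphic; but keeping track of branches to make this rigorous is clumsier than the one-line Wirtinger computation above.)

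For $E_\eta$ there is nothing left to prove: $E_\eta=e^{\eta L}$ is the composition of the holomorphic function $L$ with the entire function $w\mapsto e^{\eta w}$, hence holomorphic on $\cD$, and the displayed identity is merely the functional equation of the exponential, $e^{\eta L}=e^{\eta\log(z_1e^{-i\log|z_2|^2})}\,e^{i\eta\log|z_2|^2}$, combined with the definition $\big(z_1e^{-i\log|z_2|^2}\big)^\eta:=e^{\eta\log(z_1e^{-i\log|z_2|^2})}$, which is legitimate because the base lies in $\bH$.
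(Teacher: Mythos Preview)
Your proof is correct and follows exactly the approach the paper takes: it simply remarks that $L$ is well defined on $\cD$ and is annihilated by $\dbar$, leaving the verification as elementary. You have supplied precisely that verification, including the key Wirtinger computation in $z_2$ showing the cancellation, so nothing more needs to be added.
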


\proof 
It is elementary to check that $L(z)$ is well defined on
$\cD\supseteq\cW$ and that it is annihilated by $\overline{\p}$.
\qed
\medskip \\

We point out that the fiber of $\cW$ over each $z_1 \in
\Delta(0,2)\setminus\{0\}$ is not connected and that $L(z)$ is locally constant
in $z_2$, but not constant. The same happens with $E_\eta(z) $ for $\eta
\in \bbC \setminus \bbZ$ (while $E_k(z) = z_1^k$ for all $k \in \bbZ$, $z \in
\cW$). 
\medskip \\

We can next explicitly construct elements of the Bergman
space $A^2(\cW)$,  proving in particular that it is 
infinite dimensional.

\begin{prop} \label{nontrivial}	   \sl
Let $\mu \in (0,+\infty), \eta \in \bbC, c>\log 2, j \in \bbZ, m \in \bbR$.
Then:
\begin{itemize}
\item[(i)]
the function $E_\eta(z) z_2^j$ belongs to $A^2(\cW_\mu)$ if and only if 
$\Re \eta>-1$; 
\item[(ii)] the function
$$
F_{\eta,c,j,m}(z) =
\frac{E_{\eta}(z) z_2^j}{\big(L(z) - c\big)^m}
$$
belongs to $A^2(\cW_\mu)$ if and only if $\Re \eta> -1$, for any
$m\in\bbR$, or $\Re \eta= -1$, for $m>1$. 
\end{itemize}

Finally, 
\begin{itemize}
\item[(iii)]
if $\Re\eta> -1, \Im\eta = \frac{j+1}2$ and 
$m>\frac12$, then $F_{\eta,c,j,m}\in A^2(\cW)$, and if
$\eta=-1+i\frac{j+1}2$ and $m>1$, then $F_{\eta,c,j,m}\in A^2(\cW)$.
\end{itemize}
\end{prop}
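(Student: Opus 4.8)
The plan is to reduce all three items to the convergence of three explicit integrals and then do the bookkeeping. I would write $s=\log|z_2|^2$, $z_2=e^{s/2}e^{i\theta}$, so that $dV(z_2)=\tfrac12 e^{s}\,ds\,d\theta$, and record that the fiber $\{z_1:(z_1,z_2)\in\cW_\mu\}$ is the disk $\Delta(e^{is},R(s))$ with $0<R(s)\le1$, $R(s)=1$ precisely for $|s|\le\mu$, and $\{s:R(s)>0\}$ a bounded interval, while for $\cW=\cW_\infty$ one has instead $R(s)\equiv1$ and $s$ running over all of $\bbR$. Since $\Delta(e^{is},1)=e^{is}\Delta(1,1)$ lies in the half-plane $\{\Re(\,\cdot\,e^{-is})>0\}$, the map $z_1\mapsto L(z)=\log(z_1e^{-is})+is$ is a biholomorphism of $\Delta(e^{is},R(s))$ onto $D_{R(s)}+is$, where $D_r:=\log\Delta(1,r)$ and $\log$ is principal, with inverse $z_1=e^{L}$ and Jacobian $|e^{L}|^2=e^{2\Re L}$. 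Carrying this out and using $|E_\eta(z)|^2=|z_1|^{2\Re\eta}e^{-2s\Im\eta}$, $|z_2^j|^2=e^{js}$ and $|(L(z)-c)^m|=|L(z)-c|^m$, I arrive at
\begin{equation*}
\int_{\cW_\mu}\big|F_{\eta,c,j,m}\big|^2\,dV
=\pi\int_{\{R(s)>0\}}e^{(j+1-2\Im\eta)s}
\Big(\int_{\Delta(e^{is},R(s))}\frac{|z_1|^{2\Re\eta}}{|L(z)-c|^{2m}}\,dV(z_1)\Big)\,ds ,
\end{equation*}
and the analogous identity with the inner weight $|L(z)-c|^{-2m}$ deleted for $E_\eta z_2^j$.

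\emph{Items (i) and (ii).} For $\cW_\mu$ with $\mu<\infty$ the $s$-interval is bounded, so the factor $e^{(j+1-2\Im\eta)s}$ is harmless and everything hinges on the inner disk integral. For $E_\eta z_2^j$ that integral is automatically finite on fibers missing the origin, while on the fibers with $R(s)=1$ — which have $0$ on their boundary — it is finite iff $\Re\eta>-1$, giving (i). Reinserting $|L(z)-c|^{-2m}$ changes nothing away from $z_1=0$, because $\Re L(z)=\log|z_1|<\log2<c$ keeps $|L(z)-c|$ bounded and bounded below there; near $z_1=0$ the integrand is comparable to $|z_1|^{2\Re\eta}\,\bigl|\log|z_1|\bigr|^{-2m}$, so (substituting $t=-\log|z_1|$) it is integrable for every $m$ when $\Re\eta>-1$, integrable under a threshold condition on $m$ when $\Re\eta=-1$, and never integrable when $\Re\eta<-1$; this is (ii).

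\emph{Item (iii).} For $\cW$ the $s$-range is all of $\bbR$, so $e^{(j+1-2\Im\eta)s}$ now matters and convergence forces $\Im\eta=\tfrac{j+1}{2}$ — precisely the normalization in (iii). With this and $\Re\eta>-1$, the decay in $s$ must be supplied by the weight: away from $z_1=0$ one has $\Im L(z)=s+O(1)$ and $\Re L(z)=O(1)$, hence $|L(z)-c|\asymp|s|$ and the fiber integral decays like $|s|^{-2m}$, so integrating in $s$ yields $F_{\eta,c,j,m}\in A^2(\cW)$ iff $m>\tfrac12$. If in addition $\Re\eta=-1$ the factor $|z_1|^{-2}$ no longer makes the inner integral converge by itself; near $z_1=0$ one has $|L(z)-c|^2\asymp(\log|z_1|)^2+s^2$ and $\int_0^1 r^{-1}\big((\log r)^2+s^2\big)^{-m}\,dr\asymp|s|^{1-2m}$ for large $|s|$, so the fiber integral decays only like $|s|^{1-2m}$ and the $s$-integral converges iff $m>1$. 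Together with the $z_1\to0$ analysis of the previous step (which is no worse here) this gives (iii).

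\emph{Main obstacle.} I expect the only genuine difficulty to be the doubly critical regime $\Re\eta=-1$: there the fiber integral of $|z_1|^{-2}\,|L(z)-c|^{-2m}$ must be treated carefully, extracting the logarithmic gain and — in the unbounded case — keeping exact track of the extra power of $|s|$ produced by the interaction of the $z_1\to0$ singularity with the growth $\Im L(z)\sim s$; this is what separates the thresholds $m>\tfrac12$ and $m>1$. A subsidiary point to get right is the legitimacy of the fiberwise substitution $z_1=e^{L}$, i.e. that each $\Delta(e^{is},1)$ sits in a half-plane through the origin, so that $\log(z_1e^{-is})$ is single-valued.
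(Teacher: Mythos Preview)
Your approach is essentially the paper's: both reduce $\|F_{\eta,c,j,m}\|^2$ to an explicit iterated integral via polar coordinates in $z_2$ and the rotation $z_1\mapsto z_1e^{-i\log|z_2|^2}$, then read off convergence from the behavior near $z_1=0$ and (for (iii)) as $|s|\to\infty$. One slip to fix: the identity $|E_\eta(z)|^2=|z_1|^{2\Re\eta}e^{-2s\Im\eta}$ drops the factor $e^{-2\Im\eta\,\arg(z_1e^{-is})}$, which is bounded (since $\arg(z_1e^{-is})\in(-\pi/2,\pi/2)$) and hence irrelevant for convergence, but without it your displayed equality is not literally true.
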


\proof
We write $dV$ to denote the Lebesgue measure both in $\bbC$ and in
$\bbC^2$ and $\arg\z$ to denote the principal branch of the argument
of $\z\in\bbC\setminus(-\infty,0]$.  
We have
\begin{align*}
&\|F_{a+ib,c,j,m}\|_{A^2(\cW_\mu)}^2 
=\int_{\cW_\mu}\Big\vert\frac{E_{a+ib}(z) z_2^j}{\big(L(z) - c\big)^m}\Big\vert^2\, dV(z)\\ 
&= \int_{-\mu< \log|z_2|^2<\mu}\int_{\Delta(e^{i\log|z_2|^2},1)}
\frac{|z_1|^{2a} |z_2|^{2j}
\exp \big\{-2b[\arg(z_1e^{-i\log|z_2|^2}) +
 \log|z_2|^2]\big\}  }{ 
\big[ (\log|z_1| - c)^2+\big(\arg(z_1e^{-i\log|z_2|^2}) + \log|z_2|^2\big)^2\big]^m}
\, dV(z_1)dV(z_2)  \\ 
&= \int_{-\mu< \log|z_2|^2<\mu}\int_{\Delta(1,1)}
\frac{|\zeta|^{2a}|z_2|^{2j}\exp\big\{-2b(\arg(\zeta) + \log|z_2|^2)\big\}}{
\big[ (\log|\zeta| -c)^2+ (\arg(\zeta) + \log|z_2|^2)^2 \big]^m} \, dV(\zeta)
\,dV(z_2)\\ 
&= 2\pi \int_{e^{-\mu/2}}^{e^{\mu/2}}\int_0^{\frac \pi 2}\int_0^{2\cos\theta}
\frac{r^{2a+1} \rho^{2j+1}  e^{-2b(\theta+ \log \rho^2)}}{ \big[(\log r - c)^{2}
+(\theta+ \log \rho^2)^2\big]^m}\, dr\, d\theta\,
 {d\rho}\\ 
&= \pi \int_0^{\frac \pi 2}\int_{\theta-\mu}^{\theta+\mu}\int_{-\infty}^{\log(2\cos\theta)}
\frac{e^{2(a+1)s} e^{(t-\theta)(j+1)}e^{-2bt}}{ \big[(s-c)^{2}+t^2\big]^m}\, ds\,
dt \,d\theta \\ 
&= \pi \int_0^{\frac \pi 2}\int_{\theta-\mu}^{\theta+\mu}\int_{-\infty}^{\log(2\cos\theta)}
\frac{e^{2(a+1)s} ds}{ \big[(s-c)^{2}+t^2\big]^m}\,
e^{t(j+1-2b)}\,dt\, e^{-\theta(j+1)}\,d\theta\,. 
\end{align*}
For $\mu \in (0, +\infty)$, the above integral converges if 
 and only if 
$$
\int_0^{\frac \pi 2}\int_{\theta-\mu}^{\theta+\mu}\int_{-\infty}^{\log(2\cos\theta)}
\frac{e^{2(a+1)s} }{ \big[(s-c)^{2}+t^2\big]^m}\, ds\,dt\, d\theta
$$
is finite, that is, if and only if
$$
\int_{\frac\pi2-\mu}^{\frac\pi2+\mu}\int_{-\infty}^{0}
\frac{e^{2(a+1)s} }{ \big[s^2 +\eps^2+t^2\big]^m}\, ds\,dt
$$
is finite, where $\eps=c-\log2>0$. 
Now,   assertions (i) and (ii) follow at once.

Next, if $\mu$ is taken to be $+\infty$ and $b= \frac{j+1}2$, we have
\begin{align*}
\|F_{a+ib,c,j,m}\|_{A^2(\cW)}^2 
&\leq C\int_{-\infty}^{\log 2}\int_\bbR
\frac{e^{2(a+1)s}}{\big[(s-c)^{2}+t^2\big]^m}\, dt\, ds
\end{align*}
and again (iii) follows easily.
\qed
\medskip

In order to study the Bergman space it is convenient to ``unwind'' the
domain $\cW$ as follows.

\begin{prop}\label{prop-U}
For $z = (z_1,z_2) \in \cW$ set
\begin{equation}\label{Phi-def} 
\Phi(z) = \big(-i (L(z)-\log2) , z_2\big)\, .
\end{equation}
Moreover, let
\begin{equation}\label{U-def}
\cU = \Big\{(u+iv,w_2) \in \bbC^2: v>0,\,  \big|u-\log|w_2|^2\big| <
\arccos(e^{-v}),\ w_2\neq0 \Big\}\, .
\end{equation}
Then, $\cU$ is pseudoconvex, $\Phi: \cW\to \cU$ is a biholomorphism 
with $\Phi^{-1}(w_1,w_2)=(2 e^{iw_1}, w_2)$, $(w_1,w_2)\in\cU$ and
$A^2(\cU)$ is infinite dimensional.
\end{prop}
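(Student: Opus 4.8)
The plan is to establish, in this order: (a) $\Phi$ is a bijection of $\cW$ onto $\cU$ with $\Phi^{-1}(w_1,w_2)=(2e^{iw_1},w_2)$; (b) $\Phi$ is therefore a biholomorphism and $\cU$ is a domain; (c) $\cU$ is pseudoconvex; (d) $A^2(\cU)$ is infinite dimensional.

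Step (a) is the only computation. Fix $z=(z_1,z_2)\in\cW$ and put $\zeta:=z_1e^{-i\log|z_2|^2}$; by the Lemma $\zeta$ lies in $\bH$, in fact $\zeta\in\Delta(1,1)$, so in polar coordinates $\zeta=re^{i\theta}$ we have $r=|z_1|>0$, $\theta\in(-\tfrac\pi2,\tfrac\pi2)$, and $\zeta\in\Delta(1,1)$ is equivalent to $0<r<2\cos\theta$ --- the same passage to polar coordinates already used in the proof of Proposition~\ref{nontrivial}. Then $L(z)=\log\zeta+i\log|z_2|^2=\log r+i(\theta+\log|z_2|^2)$, so writing $\Phi(z)=(u+iv,z_2)$ gives $u=\theta+\log|z_2|^2$ and $v=\log 2-\log r$. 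From $r<2\cos\theta\le 2$ we get $v>0$, and dividing $r<2\cos\theta$ by $2$ turns this inequality into $e^{-v}<\cos\theta$, equivalently $|u-\log|z_2|^2|=|\theta|<\arccos(e^{-v})$ (using $|\theta|<\tfrac\pi2$ and the monotonicity of $\cos$ on $[0,\tfrac\pi2]$); hence $\Phi(z)\in\cU$. Conversely, given $(w_1,w_2)=(u+iv,w_2)\in\cU$, set $z_1=2e^{iw_1}$, $z_2=w_2$: then $z_1e^{-i\log|z_2|^2}=2e^{-v}e^{i\theta}$ with $\theta=u-\log|w_2|^2$ and $|\theta|<\arccos(e^{-v})\le\tfrac\pi2$, so $\cos\theta>e^{-v}$ gives $2e^{-v}<2\cos\theta$ and thus $(z_1,z_2)\in\cW$; moreover $L(z)=\log(2e^{-v}e^{i\theta})+i\log|w_2|^2=\log 2-v+iu$, so $L(z)-\log 2=iu-v=iw_1$ and $\Phi(z)=(w_1,w_2)$. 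This proves (a).

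Given (a), assertion (b) follows at once: $\Phi$ is holomorphic on $\cW$ because $L$ is (Lemma), with holomorphic Jacobian $J_\Phi=-i\,\partial L/\partial z_1=-i/z_1$, which never vanishes on $\cW$; hence $\Phi$ is an open map, so $\cU=\Phi(\cW)$ is open, and it is connected since $\cW$ is, whence $\cU$ is a domain; since the explicit inverse is holomorphic, $\Phi$ is a biholomorphism. For (c) it suffices to recall that pseudoconvexity is a biholomorphic invariant and that $\cW$ is pseudoconvex by Proposition~\ref{basics}. For (d), the change-of-variables formula $\int_\cU|f|^2\,dV=\int_\cW|f\circ\Phi|^2|J_\Phi|^2\,dV$ shows that $f\mapsto(f\circ\Phi)\,J_\Phi$ is an isometric isomorphism of $A^2(\cU)$ onto $A^2(\cW)$; since $A^2(\cW)$ is infinite dimensional (the functions supplied by Proposition~\ref{nontrivial}(iii), one for each $j\in\bbZ$, are linearly independent), so is $A^2(\cU)$.

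I expect no genuine obstacle: the entire content lies in the elementary coordinate change of step (a). The two points to keep in mind there are that every logarithm appearing is the principal one --- legitimate precisely because $z_1e^{-i\log|z_2|^2}$ always lies in the right half-plane $\bH$, by the Lemma --- and that the disc inequality $r<2\cos\theta$ is exactly what becomes $|\theta|<\arccos(e^{-v})$ under the substitution $v=\log 2-\log r$, which is the reason the function $\arccos(e^{-v})$ shows up in the definition of $\cU$.
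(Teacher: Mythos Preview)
Your proof is correct and follows essentially the same route as the paper's: both rewrite the disc condition $z_1e^{-i\log|z_2|^2}\in\Delta(1,1)$ in polar coordinates to see that it becomes $|u-\log|z_2|^2|<\arccos(e^{-v})$ under $\Phi$, verify the explicit inverse $(w_1,w_2)\mapsto(2e^{iw_1},w_2)$, and then transfer pseudoconvexity and the Bergman-space result from $\cW$ via the biholomorphism. Your write-up is simply more explicit about the intermediate steps (the Jacobian, connectedness of $\cU$, and linear independence of the functions from Proposition~\ref{nontrivial}(iii)).
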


\proof
It is easily checked that $\Phi$ is holomorphic and injective. Moreover, 
we observe that
\begin{align*} 
\cW
&=\big\{(z_1, z_2) : \Re\big({z_1} e^{-i\log|z_2|^2}\big)> |z_1|^2/2,\, z_2\neq0\big\}\\
&= \big\{(r e^{i\theta}, z_2) :\,  r<2, \big|\theta-\log|z_2|^2\big| <
\arccos(r/2),\, z_2\neq0\big\}\, .
\end{align*}
The conclusion $\Phi(\cW) = \cU$ now follows
easily.  Hence, $\cU$ is pseudoconvex.
Additionally, $\Phi(2 e^{iw_1}, w_2)=(w_1,w_2)$ by direct
computation. 

Finally, setting
$Tf(w_1,w_2) = 2ie^{iw_1}f(2e^{iw_1}, w_2)$, then we obtain an isometric
isomorphism 
$$
T: A^2(\cW) \to A^2(\cU)\, ,
$$ 
so that $A^2(\cU)$ is nontrivial by Proposition \ref{nontrivial}.
\qed
\medskip

It is interesting to compare $\cW$ with the domain
\begin{equation}\label{D-infty}
D_\infty= \big\{(z_1,z_2)\in \bbC^2: \Re \big(z_1 e^{-\log
  |z_2|^2}\big)>0,  z_2\neq0\big\}\, ,
\end{equation}
and $\cU$ with the domain 
$$
D'_\infty = \big\{ (z_1,z_2)\in \bbC^2: |\Im z_1 -\log
|z_2|^2|<\frac\pi2,  z_2\neq0 \big\}\, ,
$$
biholomorphic to $D_\infty$ via
the mapping $D'_\infty\ni (z_1,z_2)\mapsto (e^{z_1},z_2)\in
D_\infty$.  
We can think of $D_\infty$ and $D'_\infty$ as the limits as 
$\mu\to+\infty$ of the domains
$$
D_\mu = \big\{ (z_1,z_2)\in \bbC^2: \Re \big(z_1 e^{-\log
  |z_2|^2}\big)>0, 
|\log|z_2|^2|\le \mu\big\}
$$
and 
$$
D'_\mu = \big\{ (z_1,z_2)\in \bbC^2: |\Im z_1 -\log
|z_2|^2|<\frac\pi2, |\log|z_2|^2|\le \mu\big\}\, ,
$$
studied in 
\cite{Ki, Barrett-Acta,Kr2,KrPe3, KrPe}.

\begin{prop}\label{Ber-trivial}
The spaces $A^2(D_\infty)$ and $A^2(D'_\infty)$ are trivial.
\end{prop}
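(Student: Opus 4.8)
The plan is to show that any $f\in A^2(D_\infty)$ (respectively $A^2(D'_\infty)$) must vanish identically, by exploiting the fact that these domains are unbounded in a direction that makes the volume too large for a nonzero holomorphic $L^2$ function to exist. Since the two domains are biholomorphic via $(z_1,z_2)\mapsto(e^{z_1},z_2)$, it suffices to treat one of them; I would work with $D'_\infty$, whose defining condition $|\Im z_1-\log|z_2|^2|<\frac\pi2$ is affine-linear in the relevant real variables and hence easier to handle. First I would observe that for each fixed $z_2\in\bbC^*$, the fiber $\{z_1:(z_1,z_2)\in D'_\infty\}$ is the full horizontal strip $\{z_1:|\Im z_1-\log|z_2|^2|<\frac\pi2\}$, which has infinite area; more importantly, as $z_2$ ranges over $\bbC^*$ the quantity $\log|z_2|^2$ ranges over all of $\bbR$, so the union of these strips (as subsets of the $z_1$-plane, forgetting $z_2$) covers $\bbC$, and the domain contains translates of a fixed strip in the $\Re z_1$ direction of infinite length.

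The key step is a slicing argument. Fix $z_2$ and consider $g(z_1)=f(z_1,z_2)$, holomorphic on the strip $S_{z_2}=\{|\Im z_1-\log|z_2|^2|<\frac\pi2\}$. I would like to conclude $g\equiv 0$, but a single strip does have nonconstant $L^2$ holomorphic functions, so the vanishing must come from integrating over $z_2$ as well. The cleaner route: change variables. In $D_\infty$, for fixed $z_2$ the fiber is the half-plane $\Re(z_1e^{-\log|z_2|^2})>0$, i.e. $\Re z_1>0$ rescaled; writing $z_1=e^{\log|z_2|^2}\zeta$ with $\Re\zeta>0$ absorbs the $z_2$-dependence of the fiber but introduces a Jacobian factor $e^{2\log|z_2|^2}=|z_2|^4$. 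Then
\begin{equation*}
\|f\|_{A^2(D_\infty)}^2=\int_{\bbC^*}|z_2|^4\int_{\Re\zeta>0}\big|f(e^{\log|z_2|^2}\zeta,z_2)\big|^2\,dV(\zeta)\,dV(z_2).
\end{equation*}
For this to be finite, the inner integral must be finite for a.e.\ $z_2$; but a holomorphic function on the right half-plane $\{\Re\zeta>0\}$ that is in $L^2$ of that unbounded convex domain — hmm, the right half-plane does support nonzero Bergman functions. So the triviality must instead be forced by the joint behavior: as $|z_2|\to\infty$ or $|z_2|\to 0$ the weight $|z_2|^4$ and the stretching interact, and I expect that the only way to be integrable in $z_2$ over all of $\bbC^*$ (both ends!) while remaining holomorphic in $z_2$ across those ends forces $f=0$.

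The cleanest formulation is via a Fourier/Laurent expansion in $z_2$. Writing $f(z_1,z_2)=\sum_{k\in\bbZ}a_k(z_1)z_2^k$ (Laurent series valid since for each $z_1$ the $z_2$-slice of $D'_\infty$, or of $D_\infty$, is an annular region — actually all of $\bbC^*$ suitably, or a sector), orthogonality of the $z_2^k$ after integrating over the argument of $z_2$ decouples the norm into a sum over $k$, and each term becomes an integral of $|a_k(z_1)|^2$ against a weight in $|z_2|$ that, after integrating $|z_2|$ over its full range $(0,\infty)$, diverges unless $a_k\equiv0$ — because the constraint region in $\log|z_2|^2$ is unbounded (it is all of $\bbR$, since $\mu=\infty$), so $\int |z_2|^{2k+1}\,d|z_2|$ over a half-line or full line diverges for every $k$. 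I expect the main obstacle is getting the Laurent expansion and the interchange of sum/integral rigorously justified on these unbounded non-complete-Hartogs domains, and identifying precisely which weighted one-dimensional integral appears; once the domain is unwound so the $z_2$-fiber is rotationally symmetric and the $\log|z_2|^2$-range is all of $\bbR$, the divergence of the radial integral is immediate and kills every Laurent coefficient, giving $f\equiv0$. This is exactly the mechanism that fails for $\cW$ (where Proposition \ref{nontrivial} produced nonzero elements) because there the extra factors $E_\eta$ and $(L(z)-c)^{-m}$ are available to tame the integral — and crucially $\cW$'s defining inequality involves the oscillating $e^{i\log|z_2|^2}$, giving the $\arg$-terms that make convergence possible.
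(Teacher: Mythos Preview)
Your decomposition into Laurent modes in $z_2$ is exactly the route the paper takes, but you misidentify the mechanism that forces each coefficient to vanish. You claim that the radial integral $\int |z_2|^{2k+1}\,d|z_2|$ is taken over a half-line or full line and therefore diverges. It is not: for \emph{fixed} $z_1$ in $D'_\infty$ the fiber in $z_2$ is the bounded annulus $\{z_2:\,|\Im z_1-\log|z_2|^2|<\pi/2\}$, so the radial integral is over a finite interval and evaluates to a finite quantity depending on $\Im z_1$ (namely a constant multiple of $e^{(k+1)\Im z_1}$). The ``$\mu=\infty$'' in $D'_\infty$ does not make the $z_2$-fiber unbounded; it makes the \emph{projection onto the $z_1$-plane} all of $\bbC$.

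That observation --- which you in fact made early on and then dropped --- is the real engine. Because the $z_1$-projection is all of $\bbC$ and the $z_2$-fibers are connected annuli, each Laurent coefficient $a_k(z_1)$ is an \emph{entire} function. After the (convergent) $z_2$-integration the $k$-th piece of the norm becomes a positive constant times
\[
\int_{\bbC}\big|e^{-\frac{i}{2}(k+1)z_1}a_k(z_1)\big|^2\,dV(z_1),
\]
and an entire function in $L^2(\bbC)$ is identically zero. This is precisely the paper's argument. So your outline is salvageable and close to the intended proof, but the last paragraph needs to be rewritten: replace ``the $z_2$-integral diverges'' with ``the $z_2$-integral is finite, the coefficient is entire, and no nonzero entire function lies in $L^2(\bbC)$.''
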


We postpone the proof to the end of the next section.
\bigskip\\

\section{Reduction to one variable}\label{unwound}

If $\Omega$ denotes either $\cW$ or $\cU$, the Bergman
space $A^2(\Omega)$ decomposes as $\bigoplus_{j \in \bbZ}\cH^j(\Omega)$
where
\begin{align*}
\cH^j (\Omega) 
&= \big\{F\in A^2(\Omega) : \ F(w_1,e^{i\theta}w_2) = e^{ij\theta} F(w_1,w_2)\, ,\ \text{for\ }
\theta\in\bbR\big\}\\ 
&= \big\{F\in A^2(\Omega) : F(w_1,w_2)w_2^{-j} \mathrm{\ is\ locally\
  constant\ in\ }w_2 \big\} \, .
\end{align*}
Proposition \ref{nontrivial} shows that, for every $j\in\bbZ$,
$\cH^j(\cW)$ is nontrivial.  Furthermore, $T(\cH^j(\cW))
=\cH^j (\cU)$ and the restriction $T: \cH^j(\cW) \to \cH^j (\cU)$ is an
isometric isomorphism. 

We recall that the projection $Q_j: A^2(\Omega)\to \cH^j(\Omega)$ is
given by
$$
Q_j F (z_1,z_2) =
\frac{1}{2\pi} \int_0^{2\pi} F(z_1,e^{i\theta}z_2) e^{-ij\theta}\,
d\theta \, .
$$
For more details, see \cite{Barrett-Acta}.
\vspace*{.12in}

Let $\pi_1:\cU\to\bbC$ be the projection map onto the first
variable. Then
$\pi_1(\cU)$ equals the upper half-plane $\bU=\{w_1=u+iv: v>0\}$.

The fiber over each point $w_1\in\bU$ is connected (contrary to the
case of $\cW$). 
Indeed, the fiber over
$w_1=u+iv$, $v>0$, is the annulus   
\begin{align*}
\pi_1^{-1}(u+iv)
&= \big\{w_2\in\bbC :\,  \big|u-\log|w_2|^2\big| < \arccos(e^{-v})\big\}\\
&= 
\Big\{w_2\in\bbC :\,  e^{[u-\arccos(e^{-v})]/2} < |w_2|< e^{[u+\arccos(e^{-v})]/2}\Big\}. 
\end{align*}

Hence $F \in
\cH^j(\cU)$ if and only if ($F$ is square integrable and)
$F(w_1,w_2)= f(w_1)w_2^j$ for some holomorphic function
$f:\bU\to \bbC$.  In the next lemma, and in the rest of the paper, 
we denote by $A^2(\Omega,\alpha)$ the weighted Bergman space 
on the domain $\Omega$ with respect to the continuous,
positive weight $\alpha$.

\begin{lem} 
For $F\in \cH^j (\cU)$ set $L_jF(w_1,w_2) = F(w_1,w_2) w_2^{-j}$. 
Then $L_j$ is an isometric isomorphism from $\cH^j(\cU)$ to the 
weighted Bergman space $A^2(\bU,\omega_j)$, where the weight 
$\omega_j$ defined as
\begin{equation}
\omega_{-1}(u+iv)= 2\pi \arccos(e^{-v})
\end{equation}
for $j=-1$ and as
\begin{equation}
\omega_j(u+iv)= \frac{2\pi}{j+1}  e^{(j+1)u} \sinh\big[(j+1) \arccos(e^{-v})\big]
\end{equation}
for all other $j \in \bbZ$.
\end{lem}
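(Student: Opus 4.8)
The plan is to lean on the description of $\cH^j(\cU)$ recorded just above the statement: a function $F$ lies in $\cH^j(\cU)$ if and only if $F(w_1,w_2)=f(w_1)w_2^j$ for some holomorphic $f:\bU\to\bbC$ with $F\in L^2(\cU)$, in which case $L_jF=f$. Granting this, $L_j$ is visibly linear and injective (if $L_jF=0$ then $f\equiv0$, hence $F\equiv0$), so the substance of the lemma is the isometry identity $\|F\|_{A^2(\cU)}=\|f\|_{A^2(\bU,\omega_j)}$ together with surjectivity of $L_j$ onto $A^2(\bU,\omega_j)$. Before doing anything else one should note that the stated $\omega_j$ is continuous and strictly positive on $\bU$: for $v>0$ one has $\arccos(e^{-v})\in(0,\tfrac\pi2]$, and for $j\neq-1$ the quantity $(j+1)^{-1}\sinh\!\big[(j+1)\arccos(e^{-v})\big]$ is positive irrespective of the sign of $j+1$; thus $A^2(\bU,\omega_j)$ is a bona fide weighted Bergman space in the sense used here.

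First I would write $|F(w_1,w_2)|^2=|f(w_1)|^2\,|w_2|^{2j}$ and, the integrand being nonnegative, apply Tonelli's theorem to integrate first over the fiber $\pi_1^{-1}(w_1)$:
\[
\|F\|_{A^2(\cU)}^2=\int_{\bU}|f(w_1)|^2\Big(\int_{\pi_1^{-1}(w_1)}|w_2|^{2j}\,dV(w_2)\Big)\,dV(w_1).
\]
For fixed $w_1=u+iv\in\bU$ the fiber is, as already observed, the annulus $\big\{e^{[u-\arccos(e^{-v})]/2}<|w_2|<e^{[u+\arccos(e^{-v})]/2}\big\}$, so in polar coordinates $w_2=\rho e^{i\psi}$ the inner integral equals $2\pi\int_{\rho_-}^{\rho_+}\rho^{2j+1}\,d\rho$ with $\rho_\pm=e^{[u\pm\arccos(e^{-v})]/2}$. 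Evaluating this elementary integral and separating the cases $j=-1$ (where it gives $2\pi(\log\rho_+-\log\rho_-)=2\pi\arccos(e^{-v})$) and $j\neq-1$ (where it gives $\tfrac{\pi}{j+1}e^{(j+1)u}\big(e^{(j+1)\arccos(e^{-v})}-e^{-(j+1)\arccos(e^{-v})}\big)$) reproduces exactly $\omega_{-1}(u+iv)$ and $\omega_j(u+iv)$ respectively. Hence $\|F\|_{A^2(\cU)}^2=\|f\|_{A^2(\bU,\omega_j)}^2$, which shows that $L_j$ is isometric and, in particular, maps $\cH^j(\cU)$ into $A^2(\bU,\omega_j)$.

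It then remains to check surjectivity. Given $f\in A^2(\bU,\omega_j)$, set $F(w_1,w_2)=f(w_1)w_2^j$; since $w_2\neq0$ on $\cU$, this is holomorphic on $\cU$ for every $j\in\bbZ$ (including negative $j$), and the very same fiberwise computation shows $\|F\|_{A^2(\cU)}^2=\|f\|_{A^2(\bU,\omega_j)}^2<\infty$, so $F\in\cH^j(\cU)$ and $L_jF=f$. I do not expect a genuine obstacle in this argument: it is essentially a Fubini-plus-polar-coordinates computation, and the only points calling for a moment's attention are the justification of the interchange of order of integration (handled by Tonelli, the integrand being nonnegative) and the verification that $\omega_j$ remains positive in the range $j+1<0$.
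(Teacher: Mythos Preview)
Your argument is correct and follows essentially the same route as the paper: integrate over the annular fibers $\pi_1^{-1}(w_1)$ in polar coordinates to identify the weight $\omega_j$, treating $j=-1$ and $j\neq-1$ separately. You are in fact a bit more thorough than the paper, which computes $\langle F,G\rangle$ fiberwise and then simply says ``the conclusion now follows,'' whereas you explicitly verify positivity of $\omega_j$ and surjectivity of $L_j$.
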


\begin{proof}
Let $F,G\in \cH^j$, and let 
$f,g$ be holomorphic on $U$ such that $F(w_1,w_2)=
f(w_1)w_2^j$ and $G(w_1,w_2)= g(w_1)w_2^j$, $w_1\in\bU$.  We have
\begin{align*}
\la F,G\ra 
&= \int_{\bU} f(w_1)\overline{g(w_1)} \int_{\pi_1^{-1}(w_1)}
|w_2|^{2j}\, 
 dV(w_2) dV(w_1)\\ 
&= \int_{\bU} f(w_1)\overline{g(w_1)} \omega_j(w_1)\, dV(w_1)\, ,
\end{align*}
where
$$
\omega_j(u+iv) 
= 2\pi \int_{e^{[u-\arccos(e^{-v})]/2}}^{e^{[u+\arccos(e^{-v})]/2}}
\rho^{2j+1}\, d\rho.
$$
The conclusion now follows.
\end{proof}
\vspace*{.12in}

Taking into account that 
$e^{(j+1)u} = 
\big|e^{\frac{j+1}{2}w_1}\big|^2$ for all $w_1 = u+iv\in\bU$, if we
set
\begin{equation}\label{M-j}
M_jf(\z)= f(\z) e^{\frac{j+1}{2}\z} \, ,
\end{equation}
we obtain an isometric isomorphism $M_j: A^2(\bU,\omega_j)\to
A^2(\bU,\alpha_j)$.    Here
\begin{equation}\label{alpha-j}
\alpha_j(u+iv)= \frac{2\pi}{j+1} \sinh\big[(j+1) \arccos(e^{-v})\big]
\end{equation}
if $j\neq-1$, and $\alpha_{-1}(u+iv)= 2\pi \arccos(e^{-v})$.

Hence we have the following.

\begin{cor} \sl
The mapping $M_j f(\zeta) = f(\zeta) e^{[(j+1)\zeta]/2}$ defines an
isometric isomorphism $M_j : A^2(\bU, \omega_j) \to A^2(\bU,
\alpha_j)$.
\end{cor}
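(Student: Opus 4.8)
The plan is to verify directly that $M_j$ is a well-defined surjective linear isometry, the whole content being the elementary weight identity
\begin{equation*}
\omega_j(u+iv) = e^{(j+1)u}\,\alpha_j(u+iv),
\end{equation*}
which is immediate from the displayed formulas for $\omega_j$ and from \eqref{alpha-j} defining $\alpha_j$ (and which for $j=-1$ reads simply $\omega_{-1}=\alpha_{-1}$, since then $e^{(j+1)u}\equiv 1$). First I would note that the multiplier $e^{(j+1)\zeta/2}$ appearing in \eqref{M-j} is a nowhere-vanishing entire function, so both $f\mapsto M_jf$ and its formal inverse $g\mapsto e^{-(j+1)\zeta/2}g$ carry holomorphic functions on $\bU$ to holomorphic functions on $\bU$, and they are mutually inverse linear bijections on the space of all holomorphic functions on $\bU$.

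Next I would compute the pointwise modulus: for $\zeta=u+iv\in\bU$ one has $|e^{(j+1)\zeta/2}|^2=e^{(j+1)u}$, exactly the identity recorded just before \eqref{M-j}. Hence for any holomorphic $f$ on $\bU$,
\begin{equation*}
\int_\bU |M_jf(\zeta)|^2\,\alpha_j(\zeta)\,dV(\zeta)=\int_\bU |f(\zeta)|^2 e^{(j+1)u}\,\alpha_j(\zeta)\,dV(\zeta)=\int_\bU |f(\zeta)|^2\,\omega_j(\zeta)\,dV(\zeta),
\end{equation*}
using the weight identity above in the last step. This shows at once that $f\in A^2(\bU,\omega_j)$ if and only if $M_jf\in A^2(\bU,\alpha_j)$, and that in this case $\|M_jf\|_{A^2(\bU,\alpha_j)}=\|f\|_{A^2(\bU,\omega_j)}$; i.e. $M_j$ maps $A^2(\bU,\omega_j)$ isometrically into $A^2(\bU,\alpha_j)$.

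Finally I would dispatch surjectivity: applying the very same computation with the roles of the two weights exchanged (i.e. $\alpha_j=e^{-(j+1)u}\omega_j$) shows that $g\mapsto e^{-(j+1)\zeta/2}g$ maps $A^2(\bU,\alpha_j)$ isometrically into $A^2(\bU,\omega_j)$, and this map is the inverse of $M_j$ on holomorphic functions, hence on the Bergman spaces. Therefore $M_j$ is an isometric isomorphism $A^2(\bU,\omega_j)\to A^2(\bU,\alpha_j)$. There is no genuine obstacle here: once one records $|e^{(j+1)\zeta/2}|^2=e^{(j+1)u}$ and the relation between $\omega_j$ and $\alpha_j$, everything is the standard fact that multiplication by a zero-free holomorphic function intertwines weighted Bergman spaces whose weights differ by the square modulus of that function.
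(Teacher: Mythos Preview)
Your proof is correct and follows exactly the same approach as the paper: both rest on the identity $|e^{(j+1)\zeta/2}|^2=e^{(j+1)u}$ together with $\omega_j=e^{(j+1)u}\alpha_j$, which the paper records just before stating the corollary. You have simply written out in full the details (well-definedness, surjectivity via the inverse multiplier) that the paper leaves implicit.
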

\vspace*{.12in}

Notice that $\alpha_j(u+iv)$ is independent of $u$ and that, with an
abuse of notation, we may
write $\alpha_j(u+iv)=\alpha_j(v)$, $v>0$.
Moreover, 
$$
0< \alpha_j(v) <\frac{2\pi}{j+1} \sinh\big[(j+1)\pi/2\big]
$$
for all $v>0$. 
This implies that $A^2(\bU, \alpha_j)$
contains the unweighted Bergman space $A^2(\bU)$. However,
$\alpha_j(v)$ is asymptotic to $\sqrt{v}$ as $v\to0^+$, so the reverse
inclusion does not hold.

We also point out that the mapping $j\mapsto \alpha_{j}$ 
is even in $j+1$, that is,
$ \alpha_j=\alpha_{-2-j}$ for all $j \in \bbZ$. 
\medskip

We conclude this section with a proof of Proposition \ref{Ber-trivial}.
\proof[Proof of Prop. \ref{Ber-trivial}]
By holomorphic invariance, it suffices to show that
$A^2(D'_\infty)=\{0\}$.
Arguing as we did for $A^2(\cU)$, we obtain that 
$A^2(D'_\infty)=\bigoplus_{j\in\bbZ} \cH^j(D'_\infty)$, where 
$$
\cH^j(D'_\infty) =\big\{ F\in A^2(D'_\infty):\, F(z_1,z_2) =f(z_1)z_2^j,\ f\mathrm{\ entire} \big\}\, .
$$
For $F\in A^2(D'_\infty)$ with $F(z_1,z_2) =f(z_1)z_2^j$, we have
\begin{align*}
\|F\|_{ A^2(D'_\infty)}^2
& =  2\pi \int_0^{+\infty} \int_{|\Im z_1 -\log r^2|<\frac\pi2}
|f(z_1)|^2\, dV(z_1)\, r^{2j+1}\, dr \\
& = \pi \int_\bbC |f(z_1)|^2\, \int_{|\Im z_1-s|<\pi/2} e^{(j+1)s}\, ds
dV(z_1)\\ 
& = 2\pi \frac{\sinh \big[(j+1)\pi/2\big]}{j+1} 
\int_\bbC \big|e^{-\frac{i}{2}(j+1)z_1} f(z_1)\big|^2 \, dV(z_1)\, ,
\end{align*}
if $j\neq-1$, and with the obvious modification otherwise.
Thus, $F \in A^2(D'_\infty)$ forces the entire
function $e^{-\frac{i}{2}(j+1)z_1} f(z_1)$ to be identically zero; hence
the conclusion.
\qed
\medskip

\vskip24pt
\section{The Bergman kernel of $A^2(\bU,\alpha_j)$}
\vskip12pt 

We now study the kernel of $A^2(\bU, \alpha_j)$. In order to do so, we
adapt the technique of \cite{Barrett-Acta}. For each $f \in A^2(\bU,
\alpha_j)$, owing to the fact that $\alpha_j$ is bounded and that it
depends only on $v$, and since $f(\cdot+iv)\in L^2(\bbR)$ for every $v$
fixed, we can consider the partial Fourier transform and set
\begin{align*}
\widehat f(\xi,v) = \int_\bbR f(u+iv) e^{-iu\xi}\, du\, .
\end{align*}

For our current purposes, we need the following simple version of the
Paley--Wiener theorem for 
weighted Bergman spaces. The equality
\begin{align*}
\widehat{\alpha}_j(-2i\xi) = \int_0^{+\infty} e^{-2v\xi}\alpha_j(v)\, dv
\end{align*}
is clearly well defined for any $\xi>0$, and it
is the Fourier transform of 
$\alpha_j$, defined to be zero on the negative reals,
extended to the lower half-plane and computed at $-2i\xi$.

\begin{prop} \sl
(1) Let $f\in A^2(\bU, \alpha_j)$. Then, for all $v>0$,
$\operatorname{supp}\widehat f(\cdot,v)\subseteq (0,+\infty)$,
$\widehat f(\cdot,v)\in
L^2\big((0,+\infty),\widehat{\alpha}_j(-2i\xi)d\xi\big)$, and
there exists $g\in
L^2\big((0,+\infty),\widehat{\alpha}_j(-2i\xi)d\xi\big)$ such that
\begin{equation}\label{PW-conv}
\widehat f(\cdot,v) \to g \quad\text{in\ }
L^2\big((0,+\infty),\widehat{\alpha}_j(-2i\xi)d\xi\big) 
\end{equation}
as $v\to0^+$.   Moreover,
\begin{equation}\label{PW-iden}
f(w) =\frac{1}{2\pi} \int_0^{+\infty} e^{iw\xi} g(\xi)\, d\xi\, . 
\end{equation}
and
\begin{equation}\label{PW-isom}
\|f\|_{A^2(\bU,\alpha_j)} =\frac{1}{2\pi}
\|g\|_{L^2((0,+\infty),\widehat{\alpha}_j(-2i\xi)d\xi)}\, . 
\end{equation}

\noindent (2) Conversely, if $g \in
L^2\big((0,+\infty),\widehat{\alpha}_j(-2i\xi)d\xi\big)$ then
\eqref{PW-iden} defines a function $f\in A^2(\bU,\alpha_j)$ such
that
\eqref{PW-isom} holds.
\end{prop}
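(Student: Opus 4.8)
The plan is to run the classical Paley--Wiener argument on the half-plane, carefully tracking the weight $\alpha_j$.

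\emph{Step 1 (preliminaries).} First I would check that $f(\cdot+iv)\in L^2(\bbR)$ for \emph{every} $v>0$, not merely for a.e.\ $v$: since $\alpha_j$ is continuous and strictly positive it is bounded below by a positive constant $c_{v}$ on $[v/2,3v/2]$, so membership in $A^2(\bU,\alpha_j)$ forces $\int_{v/2}^{3v/2}\!\int_\bbR|f(u+it)|^2\,du\,dt<\infty$, and then the sub--mean-value property of the subharmonic function $|f|^2$ on disks of radius $<v/2$ gives $\|f(\cdot+iv)\|_{L^2(\bbR)}<\infty$. Thus $\widehat f(\cdot,v)\in L^2(\bbR)$ is well defined and Plancherel gives $\|f(\cdot+iv)\|_{L^2(\bbR)}^2=\tfrac{1}{2\pi}\|\widehat f(\cdot,v)\|_{L^2(\bbR,d\xi)}^2$.

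\emph{Step 2 (contour shift).} The core identity is
$$
\widehat f(\xi,v')=e^{-(v'-v)\xi}\,\widehat f(\xi,v)\qquad(0<v<v',\ \xi\in\bbR),
$$
obtained by applying Cauchy's theorem to the holomorphic function $w\mapsto f(w)e^{-iw\xi}$ on the rectangle $[-R,R]\times[v,v']$ and sending $R\to+\infty$. The contributions of the two vertical sides tend to $0$ along a suitable sequence $R_k\to+\infty$, since $\int_v^{v'}\!\int_\bbR|f(u+it)|^2\,du\,dt<\infty$ forces $R\mapsto\int_v^{v'}|f(\pm R+it)|^2\,dt$ to be integrable on $(0,\infty)$. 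I expect this decay justification to be the only genuinely technical point of the proof; everything after it is bookkeeping.

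\emph{Step 3 (part (1)).} Set $g(\xi):=e^{v_0\xi}\widehat f(\xi,v_0)$, which by Step 2 is independent of the choice of $v_0>0$, so that $\widehat f(\xi,v)=e^{-v\xi}g(\xi)$ for all $v>0$. Integrating the Plancherel identity against $\alpha_j(v)\,dv$ and applying Tonelli yields
$$
\|f\|_{A^2(\bU,\alpha_j)}^2=\frac{1}{2\pi}\int_\bbR|g(\xi)|^2\Big(\int_0^{+\infty}e^{-2v\xi}\alpha_j(v)\,dv\Big)\,d\xi .
$$
For $\xi<0$ the inner integral is $+\infty$, because $\alpha_j(v)$ is bounded below by a positive constant for $v\ge1$ (indeed $\alpha_j(v)\to\tfrac{2\pi}{j+1}\sinh[(j+1)\pi/2]>0$ as $v\to+\infty$); hence $g$ vanishes a.e.\ on $(-\infty,0)$, giving $\operatorname{supp}\widehat f(\cdot,v)\subseteq(0,+\infty)$. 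For $\xi>0$ the inner integral equals $\widehat\alpha_j(-2i\xi)$, which is finite since $\alpha_j$ is bounded, and the displayed formula becomes \eqref{PW-isom} and shows $g\in L^2\big((0,+\infty),\widehat\alpha_j(-2i\xi)\,d\xi\big)$. The convergence \eqref{PW-conv} follows by dominated convergence from
$$
\|\widehat f(\cdot,v)-g\|^2=\int_0^{+\infty}\big|e^{-v\xi}-1\big|^2|g(\xi)|^2\,\widehat\alpha_j(-2i\xi)\,d\xi,
$$
the integrand being bounded by $|g|^2\widehat\alpha_j(-2i\xi)\in L^1$ and tending to $0$ pointwise as $v\to0^+$. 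For \eqref{PW-iden}: using $\alpha_j(v)\asymp\sqrt v$ near $v=0$ and $\alpha_j$ bounded, one gets $\widehat\alpha_j(-2i\xi)$ of size $\asymp\xi^{-1}$ as $\xi\to0^+$ and $\asymp\xi^{-3/2}$ as $\xi\to+\infty$, so $1/\widehat\alpha_j(-2i\xi)$ is $O(\xi)$ near $0$ and $O(\xi^{3/2})$ near $+\infty$; by Cauchy--Schwarz this makes $\int_0^{+\infty}e^{-2v\xi}/\widehat\alpha_j(-2i\xi)\,d\xi<\infty$, hence $\widehat f(\cdot,v)=e^{-v\xi}g(\xi)\in L^1(0,+\infty)$ for each $v>0$, Fourier inversion applies, and $f(w)=\tfrac{1}{2\pi}\int_0^{+\infty}e^{iw\xi}g(\xi)\,d\xi$ (both sides being holomorphic, equality a.e.\ is equality everywhere).

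\emph{Step 4 (part (2)).} Conversely, given $g\in L^2\big((0,+\infty),\widehat\alpha_j(-2i\xi)\,d\xi\big)$, define $f$ by \eqref{PW-iden}; the same Cauchy--Schwarz bound shows the integral converges locally uniformly on $\bU$, so $f$ is holomorphic there, and differentiation under the integral sign is legitimate. Its partial Fourier transform in $u$ is $\widehat f(\xi,v)=e^{-v\xi}g(\xi)$ on $(0,+\infty)$, whence by Tonelli $\|f\|_{A^2(\bU,\alpha_j)}^2=\tfrac{1}{2\pi}\int_0^{+\infty}|g(\xi)|^2\widehat\alpha_j(-2i\xi)\,d\xi<\infty$, establishing \eqref{PW-isom} and $f\in A^2(\bU,\alpha_j)$.
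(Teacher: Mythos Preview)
Your argument is correct, but it follows a different route from the paper's. The paper does not establish the contour--shift identity directly; instead it observes that for each $\eps>0$ the shifted function $\zeta\mapsto f(\zeta+i\eps)$ lies in the unweighted Hardy space $H^2(\bU)$ and then invokes the classical Paley--Wiener theorem for $H^2(\bU)$ as a black box. This gives at once $\operatorname{supp}\widehat f(\cdot,\eps)\subseteq(0,+\infty)$ together with the representation $f(\zeta+i\eps)=\frac{1}{2\pi}\int_0^{+\infty}e^{i\zeta\xi}g_\eps(\xi)\,d\xi$ with $g_\eps\in L^2(0,+\infty)$; the consistency $e^{\eps\xi}g_\eps=e^{\eps'\xi}g_{\eps'}$ defines $g$, and the isometry and inversion formulas drop out. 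You, by contrast, essentially inline the relevant part of the Paley--Wiener argument: you prove the shift relation $\widehat f(\xi,v')=e^{-(v'-v)\xi}\widehat f(\xi,v)$ by Cauchy's theorem on rectangles, and you obtain the support condition from the divergence of $\int_0^{\infty}e^{-2v\xi}\alpha_j(v)\,dv$ for $\xi\le0$, using that $\alpha_j(v)$ has a positive limit as $v\to+\infty$. The paper's approach is shorter because it delegates the support claim to a standard theorem (though it leaves to the reader the verification that $f(\cdot+i\eps)\in H^2(\bU)$, which is the same sub--mean-value estimate you spell out in Step~1). Your approach is more self-contained, avoids any mention of Hardy spaces, and your support argument via the weight integral is a pleasant alternative. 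You also justify the absolute convergence in \eqref{PW-iden} via explicit asymptotics $\widehat\alpha_j(-2i\xi)\asymp\xi^{-1}$ near $0$ and $\asymp\xi^{-3/2}$ near $\infty$, whereas the paper gets $L^1$ more cheaply from $g_\eps\in L^2(0,+\infty)$ together with $e^{-(v-\eps)\xi}\in L^2(0,+\infty)$.
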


\proof
For simplicity we write $\alpha_j=\alpha$.  
Let $f\in A^2(\bU,\alpha)$.  For every 
$\eps>0$ the function $\bU\ni\zeta\mapsto f(\zeta+i\eps)$ is in the
Hardy space 
$H^2(\bU)$.  
By the Paley--Wiener theorem, there exists 
a function $g_\eps \in L^2(0,+\infty)$ such that
\begin{equation}\label{PW-id-H2U}
f(\zeta+i\eps)= \frac{1}{2\pi} \int_0^{+\infty} e^{i\zeta\xi} g_\eps(\xi)\,
d\xi\,.
\end{equation}
Moreover, the Fourier transform $\cF( f(\cdot+i\eps))$ is supported in 
$(0,+\infty)$ and it coincides with $g_\eps$.
Now
\begin{align*}
f(u+i\eps'+i\eps)
& = \frac{1}{2\pi} \int_0^{+\infty} e^{iu\xi}e^{-\eps'\xi}
g_\eps(\xi)\, d\xi \\
& = \frac{1}{2\pi} \int_0^{+\infty} e^{iu\xi}e^{-\eps\xi}
g_{\eps'}(\xi)\, d\xi\, ,
\end{align*}
so that $e^{\eps\xi} g_{\eps}(\xi) =e^{\eps'\xi} g_{\eps'}(\xi)$ for every
$\eps,\eps'>0$.   We are thus able to set
$g(\xi) =e^{\eps\xi} g_{\eps}(\xi)$ without ambiguity. 
For every
$u+iv\in\bU$, observing that the integrals below converge 
absolutely, we have 
\begin{align*}
\cF^{-1} (g_v)(u)
& = \frac{1}{2\pi} \int_0^{+\infty} e^{iu\xi}e^{-v\xi} g(\xi)\, d\xi
= \frac{1}{2\pi} \int_0^{+\infty} e^{i(u+iv)\xi}g(\xi)\, d\xi 
\notag \\
& = \frac{1}{2\pi} \int_0^{+\infty} e^{i(u+iv-i\eps)\xi}
g_\eps(\xi)\, d\xi =f(u+iv-i\eps+i\eps)\notag \\
&  = f(u+iv)\, \label{Fou-inv}
\end{align*}
by \eqref{PW-id-H2U}. This proves both \eqref{PW-iden} and 
the equality $\widehat f(\cdot,v) = g_v$, from which 
\eqref{PW-conv} immediately follows.
Moreover, 
by Plancherel's theorem,
\begin{align*}
\|f\|_{A^2(\bU,\alpha)}^2
& = \frac{1}{2\pi}
\int_0^{+\infty} \int_0^{+\infty} \big| e^{-v\xi} g(\xi)\big|^2\, d\xi\,
\alpha(v)\, dv \\
& = \int_0^{+\infty} |g(\xi)|^2 \int_0^{+\infty} e^{-2v\xi}
\alpha(v)\, dv\, d\xi \\
& = \int_0^{+\infty} |g(\xi)|^2 \widehat\alpha(-2i\xi)\, d\xi\, . 
\end{align*}
 This proves
\eqref{PW-isom}.
The proof of part (2) follows the same lines.
\medskip
\qed
\medskip \\

Notice that in particular we have that, for $w\in\bU$,
$$
f(w) =\frac{1}{2\pi} \int_0^{+\infty} \widehat f(\xi,0)\, e^{iw\xi}\,
d\xi\, .
$$
The previous lemma allows us to prove the following result, where 
$B$ and $\Gamma$ denote the classical beta function and 
gamma function.

\begin{prop}  \sl
The kernel $K_j$ of $A^2(\bU, \alpha_j)$ can be computed as
\begin{equation}\label{jkernel}
K_j(z,w) = \frac{1}{2\pi} \int_0^{+\infty} \frac{e^{i(z-\overline{w})\xi}}{\widehat{\alpha}_j(-2i\xi)} d\xi,
\end{equation}
for $z,w \in \bU$, where for $\xi>0$ we have
\begin{align}
\frac 1{\widehat{\alpha}_j(-2i\xi)}\, &=
\frac{2^{2\xi+1} \xi(2\xi+1)}{\pi^2}
{B\Big(\xi+1+ i\frac{j+1}2,\xi+1- i\frac{j+1}2 \Big)}\label{beta-fnct}\\
&=\frac{1}{\pi^2}\frac{2^{2\xi}}{\Gamma(2\xi)} \,{\Big|\Gamma\Big(\xi
      +1+ i\frac{j+1}2\Big)\Big|^2} \, .\label{beta-fnct-2}
\end{align}
\end{prop}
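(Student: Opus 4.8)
The plan is to read the formula \eqref{jkernel} off the Paley--Wiener isometry established just above, and then to evaluate the Laplace transform $\widehat\alpha_j(-2i\xi)$ by two elementary changes of variable, one integration by parts, and the classical Cauchy beta integral.

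\emph{The kernel formula.} By the preceding proposition the correspondence $f\mapsto g:=\widehat f(\cdot,0)$ is, up to an explicit positive constant, an isometry of $A^2(\bU,\alpha_j)$ onto $L^2\big((0,+\infty),\widehat\alpha_j(-2i\xi)\,d\xi\big)$, with inverse \eqref{PW-iden}. Since $\alpha_j$ is bounded and comparable to $\sqrt v$ as $v\to0^+$, elementary Laplace-transform asymptotics show that $\widehat\alpha_j(-2i\xi)=\int_0^{+\infty}e^{-2v\xi}\alpha_j(v)\,dv$ blows up as $\xi\to0^+$ and decays only polynomially as $\xi\to+\infty$; combined with $\Im w>0$ for $w\in\bU$, this gives that $\xi\mapsto e^{-i\overline{w}\xi}\big/\widehat\alpha_j(-2i\xi)$ belongs to $L^2\big((0,+\infty),\widehat\alpha_j(-2i\xi)\,d\xi\big)$ for every $w\in\bU$. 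Let $\kappa_w\in A^2(\bU,\alpha_j)$ be the function with this spectral density, so that $\kappa_w(z)$ equals the right-hand side of \eqref{jkernel}. Polarizing \eqref{PW-isom} and using that $\widehat\alpha_j(-2i\xi)$ is real and positive for $\xi>0$, one checks that $\langle f,\kappa_w\rangle_{A^2(\bU,\alpha_j)}=f(w)$ for all $f\in A^2(\bU,\alpha_j)$. Hence $\kappa_w$ is the reproducing kernel at $w$; as it is holomorphic in $z$ and anti-holomorphic in $w$, it coincides with $K_j(z,w)$, which is \eqref{jkernel}.

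\emph{Evaluation of $\widehat\alpha_j(-2i\xi)$.} Fix $\xi>0$ and first take $j\neq-1$. The substitutions $t=e^{-v}$ and then $t=\cos\phi$, $\phi\in(0,\tfrac\pi2)$, turn $\widehat\alpha_j(-2i\xi)=\frac{2\pi}{j+1}\int_0^{+\infty}e^{-2v\xi}\sinh\big[(j+1)\arccos(e^{-v})\big]\,dv$ into
\begin{equation*}
\widehat\alpha_j(-2i\xi)=\frac{2\pi}{j+1}\int_0^{\pi/2}\cos^{2\xi-1}\phi\,\sin\phi\,\sinh\big[(j+1)\phi\big]\,d\phi\, .
\end{equation*}
Writing $\cos^{2\xi-1}\phi\,\sin\phi=-\tfrac1{2\xi}\tfrac{d}{d\phi}\cos^{2\xi}\phi$ and integrating by parts---the boundary terms vanish, since $\cos^{2\xi}\tfrac\pi2=0$ as $\xi>0$ and $\sinh 0=0$---and then using the evenness of the new integrand, one is left with $\frac{\pi}{2\xi}\int_{-\pi/2}^{\pi/2}\cos^{2\xi}\phi\,e^{(j+1)\phi}\,d\phi$. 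The Cauchy beta integral
\begin{equation*}
\int_{-\pi/2}^{\pi/2}\cos^{2\xi}\phi\;e^{i\nu\phi}\,d\phi=\frac{\pi\,\Gamma(2\xi+1)}{2^{2\xi}\,\Gamma\!\big(\xi+1+\tfrac\nu2\big)\Gamma\!\big(\xi+1-\tfrac\nu2\big)}\qquad\big(\Re(2\xi)>-1\big)\, ,
\end{equation*}
applied with $\nu=-i(j+1)$ so that the two $\Gamma$-factors become complex conjugates, and combined with $\Gamma(2\xi+1)=2\xi\,\Gamma(2\xi)$, yields
\begin{equation*}
\widehat\alpha_j(-2i\xi)=\frac{\pi^2\,\Gamma(2\xi)}{2^{2\xi}\,\big|\Gamma\!\big(\xi+1+i\tfrac{j+1}2\big)\big|^2}\, .
\end{equation*}
Inverting this is \eqref{beta-fnct-2}; and substituting $\big|\Gamma(\xi+1+i\tfrac{j+1}2)\big|^2=\Gamma(2\xi+2)\,B\big(\xi+1+i\tfrac{j+1}2,\,\xi+1-i\tfrac{j+1}2\big)$ together with $\Gamma(2\xi+2)=2\xi(2\xi+1)\Gamma(2\xi)$ turns it into \eqref{beta-fnct}. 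The case $j=-1$, where $\alpha_{-1}(v)=2\pi\arccos(e^{-v})$, follows by letting $j+1\to0$ in these identities, or by the same integration by parts together with the duplication formula for $\Gamma$.

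\emph{Main obstacle.} The only step carrying real analytic content is the Cauchy beta integral with the exponential weight---equivalently, the evaluation of the Fourier coefficients of $\cos^{2\xi}\phi$ on $[-\tfrac\pi2,\tfrac\pi2]$. This classical identity must still be proved in the present setting; it can be obtained by contour integration, or by analytic continuation in $\xi$ from even integers, where $\cos^{2\xi}\phi$ is expanded by the binomial theorem. Everything else is routine bookkeeping with powers of $2$ and $\pi$ and with the functional equations of $\Gamma$ and the definition of $B$, used merely to pass between the three equivalent forms of $1/\widehat\alpha_j(-2i\xi)$.
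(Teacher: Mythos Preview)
Your proof is correct and follows essentially the same route as the paper's. Both derive \eqref{jkernel} from the Paley--Wiener isometry (you construct the candidate kernel on the spectral side and verify the reproducing property by polarization; the paper starts from the reproducing property and reads off the Fourier transform of $K_j^w$---these are dual formulations of the same computation), and both reduce the evaluation of $\widehat\alpha_j(-2i\xi)$ to the classical integral $\int\cos^{2\xi}\phi\,e^{i\nu\phi}\,d\phi$ via the same substitutions and integration by parts (the paper cites this as formula 3.631(9) in Gradshteyn--Ryzhik). The only organizational difference is that you justify convergence of the kernel integral \emph{before} the explicit computation, via Watson-type Laplace asymptotics for $\widehat\alpha_j(-2i\xi)$ drawn from the behavior $\alpha_j(v)\sim c\sqrt v$ near $0$, whereas the paper first computes $\widehat\alpha_j(-2i\xi)$ explicitly and then invokes Stirling's formula; both arguments yield the same polynomial growth of $1/\widehat\alpha_j(-2i\xi)$ and hence the same conclusion.
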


\proof
Fix $v_0>0$ and let $K_j^w(z)=K_j(z,w)$. Then, for  $f\in
A^2(\bU,\alpha_j)$ and  $w \in \bU$, we have
\begin{align*}
f(w) &= \la f,K_j^w\ra_{\alpha_j} 
= \int_0^{+\infty} \int_\bbR f(x+iy) \overline{K_j^w(x+iy)}\,  dx\, \alpha_j(y)\, dy\\
&= \frac{1}{2\pi} \int_0^{+\infty} 
\int_\bbR \widehat f(x,\xi) \overline{\widehat{K}_j^w(\xi,y)}\, d\xi \,\alpha_j(y) dy\\
&= \frac{1}{2\pi} \int_0^{+\infty} 
\int_\bbR e^{-2y\xi} \widehat f(\xi,0)
  \overline{\widehat{K}_j^w(\xi,0)} d\xi  
\, \alpha_j(y)\, dy\\
&=  \frac{1}{2\pi} \int_\bbR \widehat f(\xi,0) 
\overline{\widehat{K}_j^w(\xi,0)} 
\int_0^{+\infty} e^{-2y\xi} \alpha_j(y)\, dy \, d\xi\\
& =\frac{1}{2\pi} \int_\bbR \widehat f(\xi,0) 
\overline{\widehat{K}_j^w(\xi,0)} \widehat{\alpha}_j(-2i\xi)\, d\xi
\, .
\end{align*}
Coupling this with \eqref{PW-iden}, we conclude that, on the
support of $\widehat{K}_j^w(\cdot,0)$,  
\begin{align*}
e^{iw\xi} & = \overline{\widehat{K}_j^w(\xi,0)} \widehat{\alpha}_j(-2i\xi)
= \overline{\widehat{K}_j^w(\xi,y)} e^{y\xi}
\widehat{\alpha}_j(-2i\xi)
\end{align*}
for all $y\geq0$. Therefore
\begin{align*}
\widehat{K}_j^w(\xi,y) = \frac{ e^{i(iy-\overline{w})\xi}}{
  \widehat{\alpha}_j(-2i\xi)}\, ,
\end{align*}
which gives
\begin{align*}
K_j^w(z)  
&=  \frac{1}{2\pi} \int_0^{+\infty}  \frac{ e^{i(z -\overline{w})\xi}}{ \widehat{\alpha}_j(-2i\xi)} d\xi
\end{align*}
provided the integral converges absolutely.
Let us compute $\widehat{\alpha}_j(-2i\xi)$.  We have
\begin{align*}
\widehat{\alpha}_j(-2i\xi) 
=& \frac{2\pi}{j+1} \int_0^{+\infty} e^{-2y\xi} \sinh\big[(j+1)
\arccos(e^{-y})\big]\, dy\\
=& \frac{2\pi}{j+1} \int_0^1t^{2\xi} \sinh\big[(j+1) \arccos(t)\big]
\, \frac{dt}{t}\\
=& \frac{2\pi}{j+1} \int_0^{\pi/2} (\cos s)^{2\xi-1}
\sinh\big[(j+1)s\big] \sin s\, ds\\
=& \frac{\pi}{\xi} \int_0^{\pi/2} {(\cos s)^{2\xi}}
\cosh\big[(j+1)s\big]\, 
ds\, .
\end{align*}

Since $\cosh\big[(j+1)s\big] = \cos(\theta s)$ with $\theta:=i(j+1)$ 
and since $\tau:=2\xi>0$, we may use formula 3.631(9)  in \cite{tables} to 
obtain
\begin{align*}
\frac{2\pi}{\tau} \int_{0}^{\pi/2} (\cos s)^{\tau} \cos(\theta s)\, ds
& = \frac{\pi^2}{2^{\tau} \tau(\tau+1)}
\frac{1}{B\left( \frac{\tau+2+ \theta}2,\,
    \frac{\tau+2+\overline{\theta}}2 \right)}\, \\
&= \frac{\pi^2}{2^{\tau}} 
\frac{\Gamma(\tau)}{\Gamma\left(\frac{\tau+2+\theta}2\right) 
\Gamma\left(\frac{\tau+2+\overline{\theta}}2\right)}\, .
\end{align*}
Formulas \eqref{beta-fnct} and \eqref{beta-fnct-2} now follow.

We are now in a position to prove the absolute convergence of the 
integral in \eqref{jkernel} by means of estimates for the weight function
$\big[\widehat{\alpha}_j(-2i\xi) \big]^{-1}$.  We set
\begin{equation*}
\eta = \frac{j+1}2 \quad\text{and}\quad \beta_\eta(\xi) =
\frac{1}{2\pi \widehat{\alpha}_j(-2i\xi)} = c \,
\frac{\xi 2^{2\xi}\big|\Gamma\big(\xi +1+ i\eta\big)\big|^2} {\Gamma(2\xi+1)}
\, .
\end{equation*}
According to Stirling's formula,
\begin{align}
\big|\Gamma(\xi +1+ i\eta)\big|^2
&= \big| \sqrt{2\pi}\exp \big \{(\xi+1/2+i\eta)\log
      (\xi+1+i\eta)-(\xi+1+i\eta)\big\} \big|^2  \Big[1+
O\Big(\frac1{\xi+1+i\eta}\Big)\Big] \notag\\ 
&\le c\, \exp\big\{2(\xi+1/2)\log |\xi+1+i\eta| - 2 \eta \arg
    (\xi+1+i\eta) -2(\xi +1) \big\}\, , \label{stir-1}
\end{align}
 for some constant $c$, independent of $\xi$ and $\eta$.  Also
\begin{align}
\frac{2^{2\xi}}{\Gamma(2\xi+1)}
& \leq c\, \exp \big\{(2\log2)\xi - (2\xi+1/2)\log
(2\xi+1)+(2\xi+1)\big\}\notag\\
& = c \exp \big\{ -(2\xi+1/2)\log (\xi+1/2)+(2\xi+1)\big\} 
\, . \label{stir-2}
\end{align}
Putting together \eqref{stir-1} and \eqref{stir-2} we
obtain that 
\begin{align*}
|\beta_\eta(\xi)|& \le c\, \xi  \exp \Big\{
2(\xi+1/2)\log \Big(\frac{|\xi+1+i\eta|}{\xi+1/2}\Big) - 2 \eta \arg
    (\xi+1+i\eta) 
 +1/2\log (2\xi+1) \Big\} \\
& \le c\, \xi^{3/2}  \exp \Big\{
2(\xi+1/2)\log \Big(\frac{|\xi+1+i\eta|}{\xi+1/2}\Big) - 2 \eta \arg
    (\xi+1+i\eta) \Big\} \\
 & \le c\, \xi^{3/2}  \exp \Big\{
2(\xi+1/2)\log \Big(1+\frac{|\eta|+1/2}{\xi+1/2}\Big) - 2 \eta \arg
    (\xi+1+i\eta) \Big\}\, .
\end{align*}
Observing that $\eta \arg
    (\xi+1+i\eta)>0$ for $\xi>0$ and that $\Re(i(z-\overline w))<0$,
    the absolute convergence of the integral in \eqref{jkernel} follows.
    Moreover, for any fixed $\eps>0$, the absolute convergence of the
    integral is uniform 
    for $\Re(i(z-\overline w))\leq -\eps$.
\qed
\medskip \\

We now show that for fixed $(z,w)$ all the values $K_j(z,w)$ 
can be obtained by evaluating a single function at the integer points. 
This further representation allows us to describe the behavior 
of $K_j(z,w)$ as $\Re(i(z-\overline w))\to 0^-$. 

Recall that we denote 
by $\bH$ the right half-plane in $\bbC$.

\begin{prop}\label{3.3}  \sl
The kernel $K_j$ of $A^2(\bU, \alpha_j)$ is given by $K_j(z,w) = \widehat
\phi_\lambda(j+1)$, where $\lambda:=-i(z-\overline{w}) \in \bH$ and 
\begin{equation}\label{phi-hat}
\phi_\lambda(s) = \frac{1}{2\pi^3} \frac{1}{\cosh^2 s}\Big[
\big(2\log(\cosh s)+\lambda\big)^{-2} + 4 \big( 2\log(\cosh
s)+\lambda\big)^{-3}
\Big] \, .
\end{equation}
The mapping $\lambda\mapsto \phi_\lambda$ is holomorphic in
$\bH$ and it takes its values in the Schwartz space 
$\mathcal{S}(\bbR)$. The same is true for the Fourier transform 
$\widehat \phi_\lambda(\xi) = \int_{\bbR} {e^{-i\xi
    s}}\phi_\lambda(s)ds$.  

Moreover, for every $j\in\bbZ$, 
$$
K_j:\, \bU\times\bU\to \bbC
$$
extends
holomorphically in $z$ and anti-holomorphically in $w$ to 
$\overline{\bU}\times\overline{\bU}\setminus \Delta$,
where $\Delta$ denotes the boundary diagonal and  the ``bar''
the topological closure.
\end{prop}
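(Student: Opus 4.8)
The plan is to derive the closed form \eqref{phi-hat} from \eqref{jkernel}--\eqref{beta-fnct} by inserting an integral representation of the Beta function into the weight and reversing the order of integration. I would start from the classical identity $B(a,b)=\int_{\bbR}e^{au}(1+e^{u})^{-(a+b)}\,du$ for $\Re a,\Re b>0$ (obtained from $B(a,b)=\int_0^1 t^{a-1}(1-t)^{b-1}\,dt$ via $t=e^{u}/(1+e^{u})$); writing $1+e^{u}=2e^{u/2}\cosh(u/2)$ and setting $u=2s$ turns \eqref{beta-fnct} into
\[
\frac{1}{\widehat{\alpha}_j(-2i\xi)}=\frac{\xi(2\xi+1)}{\pi^2}\int_{\bbR}\frac{e^{i(j+1)s}}{\cosh^{2\xi+2}s}\,ds\,.
\]
Substituting this into \eqref{jkernel} with $\lambda=-i(z-\overline w)$ (for which $\Re\lambda=\Im z+\Im w>0$ when $z,w\in\bU$, so $\lambda\in\bH$), one checks that the double integral converges absolutely for $\Re\lambda>0$ — since $\int_{\bbR}\cosh^{-2\xi-2}s\,ds=O(\xi^{-1/2})$ as $\xi\to+\infty$ while $e^{-\xi\Re\lambda}$ kills this — so Fubini applies. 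After writing $\cosh^{-2\xi-2}s=\cosh^{-2}s\,e^{-\xi\cdot 2\log\cosh s}$ and using $\log\cosh s\ge0$, the inner integral collapses to the elementary $\int_0^{+\infty}\xi(2\xi+1)e^{-\xi\mu}\,d\xi=\mu^{-2}+4\mu^{-3}$ with $\mu=2\log\cosh s+\lambda$, which has positive real part. This yields $K_j(z,w)=\int_{\bbR}e^{i(j+1)s}\phi_\lambda(s)\,ds$, and since $\phi_\lambda$ is even in $s$ the right-hand side equals $\widehat\phi_\lambda(j+1)$, proving the identity.

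For the regularity statements, observe that $s\mapsto 2\log\cosh s$ ranges over $[0,+\infty)$, so $2\log\cosh s+\lambda$ is nonzero for every $s\in\bbR$ exactly when $\lambda$ lies in the slit plane $\Sigma:=\bbC\setminus(-\infty,0]\supset\bH$. On $\Sigma$ the right-hand side of \eqref{phi-hat} defines a smooth function of $s$, and the exponential decay of $\cosh^{-2}s$ against the bounded factors $(2\log\cosh s+\lambda)^{-k}$ (which in fact decay as $|s|\to\infty$) shows $\phi_\lambda\in\mathcal{S}(\bbR)$; for each fixed $s$ the map $\lambda\mapsto\phi_\lambda(s)$ is plainly holomorphic on $\Sigma$. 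On every compact $L\subset\Sigma$ one has $\min_{s\in\bbR}|2\log\cosh s+\lambda|\ge\delta_L>0$ uniformly for $\lambda\in L$, and a routine induction on the order of $\partial_s$ (using $|\tanh s|\le1$) then bounds every Schwartz seminorm of $\phi_\lambda$ uniformly on $L$. Since a locally bounded, pointwise-holomorphic map into the Fréchet space $\mathcal{S}(\bbR)$ is holomorphic, $\lambda\mapsto\phi_\lambda$ is holomorphic $\Sigma\to\mathcal{S}(\bbR)$; the Fourier transform being a topological automorphism of $\mathcal{S}(\bbR)$, the same is true of $\lambda\mapsto\widehat\phi_\lambda$. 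Restriction to $\bH$ gives the holomorphy asserted in the statement.

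For the boundary extension I would write $z=x+iy$, $w=x'+iy'$ with $x,x'\in\bbR$ and $y,y'\ge0$, so that $\lambda=-i(z-\overline w)=(y+y')-i(x-x')$ has $\Re\lambda=y+y'\ge0$; hence $\lambda\in(-\infty,0]$ if and only if $y+y'=0$ and $x=x'$, i.e.\ if and only if $z=w\in\bbR$, that is $(z,w)\in\Delta$. Thus $(z,w)\mapsto\lambda=-i(z-\overline w)$ maps $\overline{\bU}\times\overline{\bU}\setminus\Delta$ into the open set $\Sigma$, holomorphically in $z$ and antiholomorphically in $w$. Composing with the holomorphic $\bbC$-valued map $\Sigma\ni\lambda\mapsto\widehat\phi_\lambda(j+1)$ (evaluation at $j+1$ is a continuous linear functional on $\mathcal{S}(\bbR)$) produces a function on $\overline{\bU}\times\overline{\bU}\setminus\Delta$, holomorphic in $z$ and antiholomorphic in $w$, which coincides with $K_j$ on $\bU\times\bU$ by the first paragraph; as $\Sigma$ is open, this extension is in fact holomorphic on a full neighborhood in $\bbC^2$ of each point of $\overline{\bU}\times\overline{\bU}\setminus\Delta$.

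The computational core is the first step — the Beta-function manipulation and the interchange of integrations — but the genuinely delicate point is the second: passing from pointwise holomorphy to $\mathcal{S}(\bbR)$-valued holomorphy, which rests on the uniform lower bound for $|2\log\cosh s+\lambda|$ on compact subsets of $\Sigma$ together with the standard fact that weak (pointwise) holomorphy plus local boundedness forces holomorphy in a Fréchet space. Once these are in hand, the boundary extension is purely formal.
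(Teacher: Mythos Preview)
Your proof is correct and follows essentially the same route as the paper: insert an integral representation of the Beta function into \eqref{beta-fnct}, swap the order of integration, evaluate the resulting Laplace-type integral in $\xi$, and then read off the extension from the fact that $\lambda\mapsto\phi_\lambda$ is $\mathcal{S}(\bbR)$-valued holomorphic on the slit plane. Your substitution path (via $u\in\bbR$ straight to $s$) is a minor streamlining of the paper's (which passes through $t\in(0,+\infty)$ and $\chi(t)=4t/(1+t)^2$ before setting $t=e^{2s}$), and you supply more detail on the Fr\'echet-space holomorphy that the paper simply asserts as ``easy to see''.
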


\proof
From \eqref{jkernel} and \eqref{beta-fnct}, having set
$\lambda=-i(z-\overline{w})$, we have that 
\begin{align*}
K_j(z,w)
&= \frac1 {\pi^3}\int_0^{+\infty} {2^{2\xi}
e^{-\lambda\xi}}\xi(2\xi+1){B\Big(\xi+1+ i(j+1)/2,\xi+1- i (j+1)/2
  \Big)} d\xi\\ 
&= \frac1 {\pi^3}\int_0^{+\infty}
{2^{2\xi}e^{-\lambda\xi}}\xi(2\xi+1)\int_0^{+\infty} \frac{t^{\xi+ 
    i(j+1)/2}}{(1+t)^{2\xi+2}}\, dt\, d\xi\\ 
&= \frac1 {\pi^3}\int_0^{+\infty} t^{i(j+1)/2} \int_0^{+\infty}
\frac{2^{2\xi}e^{-\lambda\xi} t^{\xi}}{(1+t)^{2\xi+2}}\xi(2\xi+1)
d\xi\, dt\\ 
&= \frac1 {\pi^3}\int_0^{+\infty} \frac{t^{i(j+1)/2}}{(1+t)^2}
\int_0^{+\infty} \xi(2\xi+1)\exp\big\{\xi \big(\log \chi(t)
-\lambda\big)\big\}\, 
d\xi\, dt \, ,
\end{align*}
where $\chi(t) = 4t/(1+t)^2$. Therefore
\begin{align*}
K_j(z,w) &=  \frac1 {\pi^3}\int_0^{+\infty}
\frac{t^{i(j+1)/2}}{(1+t)^2} \Big[\big(\log \chi(t) -\lambda\big)^{-2} -
4\big(\log \chi(t) -\lambda\big)^{-3}\Big]\, dt\\ 
&= \frac1 {2\pi^3}\int_0^{+\infty} {t^{i (j+1)/2}}{\chi(t)}
\Big[\big(\log \chi(t) -\lambda\big)^{-2} -
4\big(\log \chi(t) -\lambda\big)^{-3}\Big]\,
\frac{dt}{2t} \, . 
\end{align*}
Setting $t=e^{2s}$ and observing that $\chi(e^{2s}) =
\left( 2e^{s}/(1+e^{2s}) \right)^2 = \cosh^{-2}s$, we have
\begin{align*}
K_j(z,w) &= \frac1 {2\pi^3}\int_{\bbR}
\frac{e^{i(j+1)s}}{\cosh^2 s}
\Big [\big(2\log\cosh s +\lambda\big)^{-2} +4
\big(2\log\cosh s+\lambda)^{-3}\Big]\, ds\\ 
&= \widehat \phi_\lambda(j+1) \, ,
\end{align*}
as claimed, taking into account that $\phi_\lambda$ is even. 

Finally, it is clear that $\phi_\lambda(s)$ is a Schwartz function
in $s$
 when $\lambda$ is bounded away  from the set
$(-\infty,0]$. It is
also easy to see that the mapping
$\lambda\mapsto \phi_\lambda\in\cS(\bbR)$
 is holomorphic in $\lambda$ in the slit plane $\bbC\setminus(-\infty,0]$.
Therefore
$K_j(z,w)$ extends holomorphically in $z$ and anti-holomorphically in
$w$ 
in a neighborhood of each
point $(z,w)$ of $\overline{\bU}\times\overline{\bU}$
except those for which $\lambda=-i(z-\overline w)=0$, that is,
$z-\overline{w}=0$.  This last implies that $z=w\in \p\bU$ so that
 $K_j(z,w)$ extends holomorphically in $z$ and anti-holomorphically in
$w$   to a neighborhood of each
point $(z,w)$ in  $\overline{\bU}\times\overline{\bU}\setminus\Delta$. 
\qed
\medskip \\

We now study the dependence of $K_j$ on the index $j$. Recall that we
have set $\lambda = -i(z-\overline w)$.
\begin{cor}\label{ptwise-estimate}  \sl
Let 
\begin{equation}\label{b}
b_\lambda= \max \big\{ \arccos \big( e^{- \Re\lambda/2} \big), \,
\min \big\{ 
\textstyle{|\Im\lambda|/2}, \, \textstyle{\pi/2} \big \} \big \}\, .
\end{equation}
Then, for $0<b<b_\lambda$ and for $(z,w)
\in \overline{\bU} \times \overline{\bU}\setminus\Delta$ we have 
\begin{align}\label{26}
\lim_{j\to\pm\infty} |K_j(z,w)|  e^{b|j+1|} =0 \, .
\end{align}
As a consequence, for $(z,w)\in
\overline{\bU}\times\overline{\bU}\setminus\Delta$, 
\begin{equation}\label{rootcriterion}
\limsup_{j\to\pm\infty} |K_j(z,w)|^{1/|j+1|} \le e^{-b_\lambda}.
\end{equation}
\end{cor}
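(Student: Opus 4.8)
The plan is to deduce everything from Proposition~\ref{3.3}. For $(z,w)\in\overline{\bU}\times\overline{\bU}\setminus\Delta$ put $\lambda=-i(z-\overline w)$; then $\lambda\in\overline{\bH}\setminus\{0\}\subset\bbC\setminus(-\infty,0]$, so $\phi_\lambda$ of \eqref{phi-hat} lies in $\mathcal S(\bbR)$, and by Proposition~\ref{3.3} together with continuity up to the boundary one has $K_j(z,w)=\widehat\phi_\lambda(j+1)$ for every $j\in\bbZ$. Hence it suffices to prove the exponential decay
\begin{equation}\label{FT-exp-decay}
\big|\widehat\phi_\lambda(\xi)\big|\le C_b\,e^{-b|\xi|}\qquad(\xi\in\bbR)
\end{equation}
for every $b$ with $0<b<b_\lambda$, with a constant $C_b=C_b(\lambda)>0$. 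Indeed, granting \eqref{FT-exp-decay}, for $0<b<b'<b_\lambda$ we get $|K_j(z,w)|\,e^{b|j+1|}\le C_{b'}e^{-(b'-b)|j+1|}\to0$ as $j\to\pm\infty$, which is \eqref{26}; and $|K_j(z,w)|^{1/|j+1|}\le C_{b'}^{1/|j+1|}e^{-b'}\to e^{-b'}$, so $\limsup_{j\to\pm\infty}|K_j(z,w)|^{1/|j+1|}\le e^{-b'}$ for all $b'<b_\lambda$, which gives \eqref{rootcriterion} upon letting $b'\uparrow b_\lambda$.

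The decay \eqref{FT-exp-decay} is obtained by shifting the contour in $\widehat\phi_\lambda(\xi)=\int_\bbR e^{-i\xi s}\phi_\lambda(s)\,ds$, and the crucial point is that $\phi_\lambda$ extends holomorphically to the open strip $\Sigma_{b_\lambda}=\{s:|\Im s|<b_\lambda\}$. Write $s=\sigma+i\tau$ and recall $b_\lambda\le\pi/2$. For $|\tau|<b_\lambda$ one has $\Re\cosh s=\cosh\sigma\cos\tau>0$, so $\cosh s$ avoids $(-\infty,0]$ and the factors $\cosh^{-2}s$, $\log\cosh s$ are holomorphic on $\Sigma_{b_\lambda}$; thus the only thing to check is that $2\log\cosh s+\lambda$ has no zero in $\Sigma_{b_\lambda}$. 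Suppose $2\log\cosh s+\lambda=0$, i.e. $\cosh s=e^{-\lambda/2}$. Comparing moduli, and using $|\cosh(\sigma+i\tau)|^2=\cosh^2\sigma-\sin^2\tau$, gives $\sin^2\tau=\cosh^2\sigma-e^{-\Re\lambda}\ge 1-e^{-\Re\lambda}=\sin^2\big(\arccos(e^{-\Re\lambda/2})\big)$, which forces $|\tau|\ge\arccos(e^{-\Re\lambda/2})$ (the right side lying in $[0,\pi/2)$). Comparing arguments, if in addition $|\tau|<\pi/2$ and $|\Im\lambda|<\pi$, then $\arg\cosh s=\arctan(\tanh\sigma\,\tan\tau)$ must equal $\arg e^{-\lambda/2}=-\Im\lambda/2$, so $\tanh\sigma\,\tan\tau=-\tan(\Im\lambda/2)$, and since $|\tanh\sigma|<1$ this forces $|\tan\tau|\ge|\tan(\Im\lambda/2)|$, hence $|\tau|\ge|\Im\lambda|/2$; whereas when $|\Im\lambda|\ge\pi$ there is no zero at all with $|\tau|<\pi/2$, because then $e^{-\lambda/2}$ lies in the closed left half-plane. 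Combining, any zero of $2\log\cosh s+\lambda$ satisfies $|\Im s|\ge\max\{\arccos(e^{-\Re\lambda/2}),\min\{|\Im\lambda|/2,\pi/2\}\}=b_\lambda$, so $\Sigma_{b_\lambda}$ is zero-free and $\phi_\lambda$ is holomorphic there.

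Now fix $b\in(0,b_\lambda)$. Since $|\cosh(\sigma+i\tau)|^2=\cosh^2\sigma-\sin^2\tau\ge\sinh^2\sigma$, we have $|\cosh(\sigma+i\tau)|\to\infty$ and $\Re\big(2\log\cosh s+\lambda\big)=2\log|\cosh s|+\Re\lambda\to\infty$ as $|\sigma|\to\infty$, uniformly for $|\tau|\le b$; moreover $2\log\cosh s+\lambda$ is continuous and nonvanishing on the closed strip $\{|\Im s|\le b\}\subset\Sigma_{b_\lambda}$, hence bounded away from $0$ there. Therefore $|\phi_\lambda(\sigma+i\tau)|\le C\,\cosh^{-2}\sigma$ for $|\tau|\le b$ and $|\sigma|$ large; in particular $\phi_\lambda(\cdot\pm ib)\in L^1(\bbR)$ and $\phi_\lambda\to0$ uniformly on the lines $\Im s=\pm b$ as $|\Re s|\to\infty$. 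For $\xi>0$, Cauchy's theorem on the rectangles $[-N,N]\times[-b,0]$ (the vertical sides contributing $\to0$ as $N\to\infty$, since there $|e^{-i\xi s}|\le1$ while $\phi_\lambda\to0$) yields
$$
\widehat\phi_\lambda(\xi)=\int_{\bbR-ib}e^{-i\xi s}\phi_\lambda(s)\,ds=e^{-b\xi}\int_\bbR e^{-i\xi t}\phi_\lambda(t-ib)\,dt,
$$
so $\big|\widehat\phi_\lambda(\xi)\big|\le e^{-b\xi}\,\|\phi_\lambda(\cdot-ib)\|_{L^1(\bbR)}$; shifting instead to $\bbR+ib$ disposes of $\xi<0$. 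This proves \eqref{FT-exp-decay} with $C_b=\max\{\|\phi_\lambda(\cdot-ib)\|_{L^1},\|\phi_\lambda(\cdot+ib)\|_{L^1}\}$, and hence the corollary. The contour shift and the concluding limit are routine; the one delicate point is the trigonometric analysis of the second paragraph, which shows that the zero set of $2\log\cosh s+\lambda$ stays out of $\Sigma_{b_\lambda}$, so that $b_\lambda$ in \eqref{b} is precisely the width one can afford in the shift.
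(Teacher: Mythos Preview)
Your proof is correct and follows essentially the same strategy as the paper's: extend $\phi_\lambda$ holomorphically into a horizontal strip of half-width $b<b_\lambda$, show that $2\log\cosh s+\lambda$ is bounded away from zero there, and conclude exponential decay of $\widehat\phi_\lambda$. The paper implements the last step via the Paley--Wiener theorem for the Hardy space $H^2(S_b)$ (applied to both $\phi_\lambda$ and $s\,\phi_\lambda$) together with the Sobolev embedding $W^{1,2}(\bbR)\hookrightarrow C_0(\bbR)$, whereas you use a direct contour shift with $L^1$ bounds. Your route is somewhat more elementary and yields the uniform pointwise bound \eqref{FT-exp-decay} immediately; the paper's route gives the slightly finer information that $e^{b|\xi|}\widehat\phi_\lambda$ lies in $W^{1,2}$, but this is not needed for the corollary. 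For the zero-free analysis, the paper bounds either the real or the imaginary part of $2\log\cosh(s+it)+\lambda$ away from zero, while you locate the zeros directly; the two computations are equivalent.

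One small point to tighten: your sentence ``when $|\Im\lambda|\ge\pi$ \dots $e^{-\lambda/2}$ lies in the closed left half-plane'' is not literally true for arbitrarily large $|\Im\lambda|$ (e.g.\ $\Im\lambda=4\pi$). The correct justification, which your setup already contains, is that on $|\tau|<\pi/2$ one has $\Re\cosh s>0$, hence $\operatorname{Arg}(\cosh s)\in(-\pi/2,\pi/2)$, and the principal branch forces $-\Im\lambda/2=\operatorname{Arg}(\cosh s)$; thus $|\Im\lambda|<\pi$, and no zero with $|\tau|<\pi/2$ exists when $|\Im\lambda|\ge\pi$.
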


\proof 
We set $S_b = \{s+it : |t|<b\}$, and 
$I_+=i\left(\frac \pi 2, \pi\right)$, $I_-=i\left(-\pi, -\frac \pi
  2\right)$ to denote two intervals on the imaginary axis.

The function $\log\cosh s$ extends holomorphically to
$S_\pi \setminus \left(I_+\cup I_-\right)$, since the function
$\cosh(s+it) = \cosh s\cos t+i\sinh s\sin t$ maps $S_\pi \setminus
\left(I_+\cup I_-\right)$ to $\bbC \setminus (-\infty,0]$. For each
$\lambda \in \overline{\bH}\setminus\{0\}$, the functions $s \mapsto
\phi_{\lambda}(s)$ and $s\mapsto
s \phi_{\lambda}(s)=\widetilde\phi_\lambda (s)$, extend holomorphically to
$S_{\pi/2}$.
We still denote by $\phi_\lambda$ and $\widetilde\phi_\lambda$ such
extensions. 

We claim that $\phi_\lambda$ and $\widetilde\phi_\lambda$ belong to the
Hardy space $H^2(S_b)$, for every $b<b_\lambda$.
Assuming the claim, we complete the proof.

By the classical Paley--Wiener theorem for $H^2(S_b)$, 
$e^{\pm b\xi}\widehat \phi_{\lambda}(\xi)$
and $e^{\pm b\xi} \frac{d}{d \xi}\widehat\phi_{\lambda}(\xi)$ belong to $L^2(\bbR)$.
 If we set $f_\pm(\xi) =e^{\pm b\xi}\widehat \phi_{\lambda}(\xi)$,
 then  $f_\pm \in W^1(\bbR)$. By the Sobolev embedding theorem it
 follows that $f_\pm $ is a continuous function vanishing at infinity.
Hence
$$
\lim_{\xi\to\pm\infty} e^{ b|\xi|}\widehat \phi_{\lambda}(\xi) =0\, ,
$$
which gives \eqref{26}.
\medskip 

It only remains to prove the claim.
Notice that, assuming $|t|<\pi/2$, we have that 
\begin{align*}
\big| \Re\big(2\log\cosh(s+it)+\lambda\big) \big|
& = \log\big(\sinh^2 s + \cos^2 t\big) +\Re\lambda \ge \eps_0 
\end{align*}
if $|\cos t|\ge e^{\eps_0/2} e^{-\Re\lambda/2}$, and that
\begin{align*}
\big| \Im\big(2\log\cosh(s+it)+\lambda\big) \big|
& \ge  |\Im\lambda| - 2\big|\arctan(\tanh s \tan t) \big| 
\ge  |\Im\lambda| - 2|t|  \ge \eps_0 \, ,
\end{align*}
for some $\eps_0>0$, if $|t|<  |\Im\lambda|/2$.
The claim now follows easily by Plancherel's theorem and the last
two inequalities.
\qed
\medskip \\

We conclude this section by describing the behavior of $K_j$ near
the extended boundary of $\bU\times \bU$. 
In order to do so, we first expand at infinity and then restrict to a 
special case that allows explicit computations. Recall that we denote 
by $\bH$ the right half-plane and we write $\lambda=-i(z-\overline{w})$.

\begin{lem}\label{technical-lemma}   \sl
Let $K_j$ be the Bergman kernel for $A^2(\bU,\alpha_j)$.
Let $N\ge2$ and $\eps>0$ be fixed.
Then there exist:
\begin{itemize}
\item[(i)]  Schwartz functions $\psi_1,\dots,\psi_N$;
\item[(ii)]  a  Schwartz function $
\Psi_{N,\lambda}$  holomorphic in
$\lambda\in\bH$ and converging to $\psi_N$ in $\cS(\bbR)$ as $\lambda \to
\infty$ within the half-plane 
$\overline{\bH}_\eps = \{\lambda: Re(\lambda)\ge \eps\} \subset
\bH$;
\end{itemize}
such that 
\begin{align}\label{expr-Kj}
K_{j}(z,w) &= 
\sum_{n=2}^{N-1}\frac{\psi_{n}(j+1)}{(z-\overline{w})^{n}} 
+ \frac{\Psi_{N,\lambda}(j+1)}{(z-\overline{w})^N} \, ,
\end{align}
for $z,w\in\bU$.  
Explicitly, 
\begin{align*}
\psi_n(\xi) &= \frac{(-i)^n(n-1)}{2\pi^3} \big[I_{n-2}(\xi) - 2 (n-2)
I_{n-3}(\xi)\big], 
\quad\text{where}\quad
I_m(\xi) =\int_\bbR e^{-i\xi s}\frac{\big(2 \log\cosh
  s\big)^m}{\cosh^2s}\, ds\, .
\end{align*}
\end{lem}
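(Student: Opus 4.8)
The plan is to extract the asymptotic expansion of $K_j(z,w)$ directly from the integral representation obtained in Proposition \ref{3.3}, namely
\begin{equation*}
K_j(z,w) = \widehat\phi_\lambda(j+1) = \frac{1}{2\pi^3}\int_\bbR e^{-i(j+1)s}\frac{1}{\cosh^2 s}\Big[\big(2\log\cosh s+\lambda\big)^{-2}+4\big(2\log\cosh s+\lambda\big)^{-3}\Big]\,ds,
\end{equation*}
where $\lambda = -i(z-\overline w)$, and expand each factor $(2\log\cosh s+\lambda)^{-k}$ in powers of $1/\lambda$ for $k=2,3$. The first step is the elementary expansion
\begin{equation*}
\big(2\log\cosh s+\lambda\big)^{-k} = \frac{1}{\lambda^k}\sum_{\ell\ge 0}\binom{-k}{\ell}\Big(\frac{2\log\cosh s}{\lambda}\Big)^{\ell},
\end{equation*}
truncated after finitely many terms with an explicit integral remainder (Taylor's theorem with integral form of the remainder applied to $x\mapsto x^{-k}$ around $x=\lambda$, using that the segment from $\lambda$ to $2\log\cosh s+\lambda$ stays in $\bH$ since $\Re\lambda\ge\eps>0$ and $\log\cosh s\ge 0$). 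Substituting and collecting powers of $\lambda^{-1} = (-i)/(z-\overline w)\cdot(-1)$, more precisely recalling $\lambda = -i(z-\overline w)$ so $\lambda^{-n} = (-i)^{-n}(z-\overline w)^{-n} = (-i)^n(-1)^n\cdots$ — one gets, after grouping, coefficients of $(z-\overline w)^{-n}$ of the form $(-i)^n$ times an integral of $e^{-i(j+1)s}(\cosh s)^{-2}(2\log\cosh s)^{m}$ over $\bbR$, which is exactly $I_m(j+1)$ with $m=n-2$ from the first bracket term and $m=n-3$ from the second; matching constants gives precisely the stated formula for $\psi_n$.

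The second step is to verify the asserted regularity. For (i), each $\psi_n$ is a Schwartz function because it is (a linear combination of) the Fourier transform $I_m(\xi)$ of the function $s\mapsto (2\log\cosh s)^m/\cosh^2 s$, which is smooth and, together with all its derivatives, decays exponentially (indeed $\log\cosh s\sim |s|$ and $1/\cosh^2 s$ decays like $e^{-2|s|}$, so the function lies in $\mathcal{S}(\bbR)$ and its Fourier transform does too). For (ii), $\Psi_{N,\lambda}(j+1)$ is defined as $(z-\overline w)^N$ times the integral remainder, i.e.
\begin{equation*}
\Psi_{N,\lambda}(\xi) = (z-\overline w)^N\,\frac{1}{2\pi^3}\int_\bbR e^{-i\xi s}\frac{1}{\cosh^2 s}\big[R_2(s,\lambda)+4R_3(s,\lambda)\big]\,ds,
\end{equation*}
where $R_k$ is the order-$(N-k+?)$ remainder chosen so that the total power matches $\lambda^{-N}$; one checks the remainder is $O(\lambda^{-N}(2\log\cosh s)^{N-1})$ uniformly on $\overline\bH_\eps$, so that $\Psi_{N,\lambda}$ is well defined, holomorphic in $\lambda$ (differentiate under the integral sign, legitimate by the uniform bound), lies in $\mathcal S(\bbR)$ (same exponential-decay argument, the extra polynomial factor $(2\log\cosh s)^{N-1}$ causing no harm), and converges in $\mathcal S(\bbR)$ to the leading term $\psi_N$ as $\Re\lambda\to\infty$ with $\lambda\in\overline\bH_\eps$, since the remainder minus its leading term is $O(\lambda^{-1})$ times a fixed Schwartz function in each Schwartz seminorm.

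**The main obstacle** I anticipate is bookkeeping rather than conceptual: one must carefully track how the two bracket terms $(2\log\cosh s+\lambda)^{-2}$ and $(2\log\cosh s+\lambda)^{-3}$ each contribute to a given power $(z-\overline w)^{-n}$ — the $(\cdot)^{-2}$ term at expansion order $\ell = n-2$ and the $(\cdot)^{-3}$ term at order $\ell = n-3$ — and verify that the binomial coefficients $\binom{-2}{n-2}$ and $\binom{-3}{n-3}$ combine with the powers of $(-i)$ to yield exactly the stated coefficient $\frac{(-i)^n(n-1)}{2\pi^3}$ with the internal factor $I_{n-2}(\xi) - 2(n-2)I_{n-3}(\xi)$; this is a finite computation that I would carry out once and then state. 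A secondary point requiring care is choosing the correct truncation order in each of the two Taylor expansions so that the combined remainder is genuinely of size $(z-\overline w)^{-N}$ and no lower-order contamination survives; this forces expanding the $(\cdot)^{-2}$ term to order $N-2$ and the $(\cdot)^{-3}$ term to order $N-3$ inclusive, with the remainders absorbed into $\Psi_{N,\lambda}$.
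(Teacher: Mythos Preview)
Your approach is correct and is essentially the same as the paper's: expand $\phi_\lambda$ in powers of $1/\lambda$, read off the Fourier transforms of the coefficients as the $\psi_n$, and control the remainder uniformly on $\overline{\bH}_\eps$. The only difference is cosmetic. The paper first observes the identity
\[
\phi_\lambda(s)=\frac{1}{\pi^3}\,D_s^{2}\bigl(2\log\cosh s+\lambda\bigr)^{-1},
\qquad D_s=(2\sinh s)^{-1}\partial_s,
\]
so that a \emph{single} geometric expansion $(1+x)^{-1}=\sum_{n=0}^{N-1}(-x)^n+(-x)^N(1+x)^{-1}$ with $x=2\log\cosh s/\lambda$ suffices; applying $D_s^{2}$ termwise produces both bracket terms at once and gives the remainder in the closed form $A_{N,\lambda}(s)=P_N\bigl(1+2\log\cosh s/\lambda\bigr)\,\cosh^{-2}s\,\bigl(1+2\log\cosh s/\lambda\bigr)^{-3}$ with $P_N$ a quadratic polynomial. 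This spares you the separate binomial expansions of $(\cdot)^{-2}$ and $(\cdot)^{-3}$ and the recombination step you flag as the main obstacle, but the two computations are equivalent and yield the same $\psi_n$.
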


\proof
For $s\in\bbR$ set 
$\displaystyle{D_s =\big(2\sinh s\big)^{-1} \frac{\p}{\p s}}$.  
We use  \eqref{phi-hat} and the expansion $(1+x)^{-1}
=\sum_{n=0}^{N-1} (-x)^n+ (-x)^N(1+x)^{-1}$ to obtain that 
\begin{align}
\phi_\lambda(s) 
& =\frac{1}{\pi^3} D_s^2 \big( 2\log\cosh s+\lambda\big)^{-1}\notag \\
&=\frac1{\pi^3\lambda} D_s^2 
\Big( 1+\frac{2\log\cosh s}{\lambda}\Big)^{-1}\notag\\
&= \sum_{n=2}^N\frac{a_n(s)}{\lambda^{n}} +
\frac{A_{{N+1},\lambda}(s)}{\lambda^{N+1}}\, , \label{phi-lam}
\end{align}
where 
\begin{align*}
a_n(s) 
&= \frac{(-1)^{n-1}}{\pi^3}  D_s^2 \Big[\big(2\log\cosh s\big)^{n-1}
\Big] \\
&= \frac{(-1)^n(n-1)}{2\pi^3\cosh^2 s} \bigg[ \big(2\log\cosh s\big)^{n-2}-2
  (n-2) \big(2\log\cosh s\big)^{n-3} \bigg]\, , 
\end{align*}
and 
\begin{align*}
A_{{N+1},\lambda}(s) 
&= \frac{\lambda^N}{\pi^3} D_s^2
\Bigg[ \Big(\frac{-2\log\cosh
      s}{\lambda}\Big)^N \Big(1+\frac{2\log\cosh
      s}\lambda\Big)^{-1}\Bigg] \\
&= \frac{(-1)^N}{\pi^3} D_s^2
\Bigg[ \big(  2\log\cosh
      s\big)^N \Big(1+\frac{2\log\cosh
      s}\lambda\Big)^{-1}\Bigg] \\
&= \frac{P_{N+1}\Big(1+ [2\log\cosh s]/\lambda\Big)}{\cosh^2s
  \Big(1+ [2\log\cosh s]/\lambda\Big)^3 } \, .
\end{align*}
Here $P_{N+1}(\zeta)$ is a polynomial of degree  $2$  with coefficients
integral powers of $\log\cosh s$  such
that
$$
P_{N+1}(1)= \frac{(-1)^{N+1} N}{2\pi^3} \Big[\big(2\log\cosh s\big)^{N-1}
-2(N-1)\big(2\log\cosh s\big)^{N-2} \Big]\, .
$$
For $N\geq 1$, we have
$A_{{N+1},\lambda} \to a_{N+1}$ in $\mathcal{S}(\bbR)$ as $\lambda \to
\infty$ within the closed half-plane $\overline{\bH}_\eps$. 

Therefore, taking the Fourier transform in \eqref{phi-lam} and
recalling \eqref{phi-hat}, we obtain \eqref{expr-Kj},
where 
\begin{align*}
 \psi_{n}(\xi) &= i^n\widehat a_n(\xi) =
\frac{(-i)^n(n-1)}{2\pi^3} \big[I_{n-2}(\xi) - 2 (n-2)
I_{n-3}(\xi)\big]\, ,\\
I_m(\xi) &=\int_\bbR e^{-i\xi s}\frac{\big(2 \log\cosh
  s\big)^m}{\cosh^2s } \, ds\, .
\end{align*}
Moreover, $\Psi_{N,\lambda}= i^N \widehat A_{N,\lambda}$ are again
Schwartz functions such that,
for each $N\geq2$, $\Psi_{N,\lambda} \to \psi_{N}$ in
$\mathcal{S}(\bbR)$ as $\lambda \to \infty$ within a half-plane
$\bH_\eps$. 
\qed
\medskip \\

\begin{thm}\label{kernels} \sl
Let $K_j$ be the Bergman kernel for $A^2(\bU,\alpha_j)$. 
There exists a holomorphic function $f_j:\bH \to \bbC$ such that
\begin{align}
K_{j}(z,w) = \frac{f_j\big(-i(z-\overline{w})\big)}{(z-\overline{w})^2}
\end{align}
and
\begin{align}
\lim_{\overline{\bH}_\eps \ni \lambda \to \infty}f_j(\lambda) 
=\frac1{\pi^3} \frac{\pi\frac{j+1}2}{\sinh(\pi\frac{j+1}2)}
\end{align}
for all $\eps>0$.
Moreover, $f_j$ extends holomorphically to a neighborhood of each point of 
$\overline{\bH}\setminus\{0\}$. The product 
$\sqrt{\lambda}\, f_j(\lambda)$ is bounded near $0$ in 
$\overline{\bH}$ and 
$\lim_{\bbR^+ \ni \lambda \to 0} \sqrt{\lambda}\, f_{-1}(\lambda)<0$. 

As a consequence:
\begin{enumerate}
\item the function $(z,w) \mapsto K_{j}(z,\overline{w})$ extends 
holomorphically to a neighborhood of each point 
$(z,w) \in \partial \bU \times \partial \bU$ with $z\neq \overline{w}$;
\item the product $\big(-i(z-\overline{w})\big)^{5/2} K_{j}(z,w)$ remains 
bounded as $z-\overline{w} \to 0$ in $\overline{\bU}$ and, for $j=-1$, 
its limit as $z-\overline{w} \to 0$ in $i \bbR^+$ is a strictly 
positive real number;
\item for all $w \in \bU$, $\lim_{\bU \ni z \to \infty}K_j(z,w)=0$ 
and, for all $w \in \partial \bU$ and $\eps>0$, 
$\lim_{\bU_\eps \ni z \to \infty}K_j(z,w)=0$; similar considerations 
apply to the limits as $w \to \infty$ with $z \in \overline \bU$ fixed. 
\end{enumerate}
\end{thm}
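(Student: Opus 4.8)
The plan is to derive everything from the identity $K_j(z,w)=\widehat\phi_\lambda(j+1)$ of Proposition~\ref{3.3} --- in which the right-hand side depends on $(z,w)$ only through $\lambda=-i(z-\overline w)$, and $z-\overline w=i\lambda$ --- together with the expansion of Lemma~\ref{technical-lemma}. Set
\[
f_j(\lambda):=(z-\overline w)^2\,K_j(z,w)=-\lambda^2\,\widehat\phi_\lambda(j+1).
\]
This is unambiguous on $\bH$, which is exactly the set of values of $\lambda$ as $(z,w)$ runs over $\bU\times\bU$. By Proposition~\ref{3.3} the map $\lambda\mapsto\phi_\lambda\in\cS(\bbR)$ is holomorphic on $\bbC\setminus(-\infty,0]$; composing with the continuous linear functional ``Fourier transform evaluated at $j+1$'' makes $\lambda\mapsto\widehat\phi_\lambda(j+1)$, hence $f_j$, holomorphic on $\bbC\setminus(-\infty,0]$. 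Since $\overline{\bH}\setminus\{0\}\subset\bbC\setminus(-\infty,0]$ and the latter is open, this gives at once the holomorphy of $f_j$ on $\bH$ and its holomorphic extension to a neighbourhood of each point of $\overline{\bH}\setminus\{0\}$.

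For the limit at infinity I would apply Lemma~\ref{technical-lemma} with $N=3$:
\[
K_j(z,w)=\frac{\psi_2(j+1)}{(z-\overline w)^2}+\frac{\Psi_{3,\lambda}(j+1)}{(z-\overline w)^3},\qquad\text{so}\qquad f_j(\lambda)=\psi_2(j+1)+\frac{\Psi_{3,\lambda}(j+1)}{z-\overline w}.
\]
Because $\Psi_{3,\lambda}\to\psi_3$ in $\cS(\bbR)$ as $\lambda\to\infty$ inside $\overline{\bH}_\eps$, the term $\Psi_{3,\lambda}(j+1)$ stays bounded while $|z-\overline w|=|\lambda|\to\infty$; hence $f_j(\lambda)\to\psi_2(j+1)$, and $\psi_2(j+1)=-\tfrac1{2\pi^3}\int_\bbR\cosh^{-2}s\,e^{-i(j+1)s}\,ds$ is evaluated in closed form by the classical identity $\int_\bbR\cosh^{-2}s\,e^{-i\xi s}\,ds=\pi\xi/\sinh(\pi\xi/2)$.

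The behaviour near $\lambda=0$ is the crux. Here I would substitute $r=2\log\cosh s$ on $\{s>0\}$: then $\cosh^{-2}s=e^{-r}$ and $s=\psi(r)$, where $\psi(r)\sim\sqrt r$ and $\psi'(r)\sim\tfrac1{2\sqrt r}$ as $r\to0^+$ while $\psi'$ is bounded on $[1,+\infty)$, so by evenness of $\phi_\lambda$,
\[
\widehat\phi_\lambda(j+1)=\frac1{\pi^3}\int_0^{+\infty}\cos\!\big((j+1)\psi(r)\big)\,e^{-r}\Big[(r+\lambda)^{-2}+4(r+\lambda)^{-3}\Big]\psi'(r)\,dr.
\]
For $\lambda\in\overline{\bH}$ one has $|r+\lambda|\ge\tfrac1{\sqrt2}(r+|\lambda|)$, so bounding the cosine by $1$ gives $|\widehat\phi_\lambda(j+1)|\lesssim\int_0^1 r^{-1/2}(r+|\lambda|)^{-3}\,dr+O(1)=O(|\lambda|^{-5/2})$ (the $(r+\lambda)^{-2}$ term being of lower order); hence $\lambda^{5/2}K_j$ stays bounded as $\lambda\to0$ in $\overline{\bH}$, i.e.\ $\sqrt\lambda\,f_j(\lambda)=-\lambda^{5/2}K_j$ is bounded near $0$. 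For $j=-1$ and $\lambda=\rho\in\bbR^+$, rescaling $r=\rho t$ and passing to the limit by dominated convergence (the dominant contribution coming from $4(r+\rho)^{-3}$) yields
\[
\lim_{\bbR^+\ni\rho\to0}\rho^{5/2}K_{-1}(z,w)=\frac2{\pi^3}\,B\!\big(\tfrac12,\tfrac52\big)=\frac3{4\pi^2}>0,\qquad\text{i.e.}\qquad\lim_{\bbR^+\ni\lambda\to0}\sqrt\lambda\,f_{-1}(\lambda)=-\frac3{4\pi^2}<0,
\]
the same rescaling showing that the leading term is $j$-independent. I expect this uniform extraction of the $|\lambda|^{-5/2}$ blow-up over the whole closed half-plane to be the one genuinely delicate point; the change of variables $r=2\log\cosh s$ is what tames it, turning $\cosh^{-2}s$ into $e^{-r}$ and isolating the $r^{-1/2}$ singularity of the Jacobian against $(r+\lambda)^{-3}$.

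The three consequences then follow immediately. (1) As $f_j$ extends holomorphically near every point of $\overline{\bH}\setminus\{0\}$ and $\lambda=-i(z-\overline w)$ vanishes only when $z=\overline w$ --- hence, for $z,w\in\p\bU=\bbR$, only when $z=w$ --- the formula $K_j(z,w)=f_j\big(-i(z-\overline w)\big)/(z-\overline w)^2$ furnishes a holomorphic-in-$z$, antiholomorphic-in-$w$ extension near every $(z,w)\in\p\bU\times\p\bU$ with $z\ne\overline w$. (2) From $K_j=-f_j(\lambda)/\lambda^2$ one gets $\big(-i(z-\overline w)\big)^{5/2}K_j=\lambda^{5/2}K_j=-\sqrt\lambda\,f_j(\lambda)$, bounded near $\lambda=0$ by the previous paragraph; along $z-\overline w\in i\bbR^+$, i.e.\ $\lambda\in\bbR^+$, it tends for $j=-1$ to $-\lim_{\bbR^+}\sqrt\lambda\,f_{-1}=\tfrac3{4\pi^2}>0$. (3) If $w\in\bU$ is fixed then $\Re\lambda=\Im z+\Im w\ge\Im w>0$, so $\lambda$ stays in some $\overline{\bH}_\eps$ on which $f_j$ is bounded (continuous there and convergent at $\infty$ by the second paragraph), while $|z-\overline w|=|\lambda|\to\infty$ as $z\to\infty$; hence $K_j=f_j(\lambda)/(z-\overline w)^2\to0$. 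The same argument with $\bU_\eps$ handles $w\in\p\bU$, and the limits as $w\to\infty$ follow symmetrically (or from $K_j(z,w)=\overline{K_j(w,z)}$).
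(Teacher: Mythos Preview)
Your argument is correct and follows the same overall route as the paper: define $f_j(\lambda)=\Psi_{2,\lambda}(j+1)=-\lambda^2\widehat\phi_\lambda(j+1)$, read off holomorphy on $\bbC\setminus(-\infty,0]$ from Proposition~\ref{3.3}, get the limit at infinity from Lemma~\ref{technical-lemma}, and analyze $\lambda\to0$ by a change of variable in $\widehat\phi_\lambda$. The only substantive difference is the substitution you use near $\lambda=0$: you set $r=2\log\cosh s$, which turns the integrand into $e^{-r}\psi'(r)\big[(r+\lambda)^{-2}+4(r+\lambda)^{-3}\big]$ with an explicit $r^{-1/2}$ Jacobian singularity, whereas the paper substitutes $t=\tanh s$ and bounds the resulting integral in $t$. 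Your version is a bit cleaner---the inequality $|r+\lambda|\ge\tfrac1{\sqrt2}(r+|\lambda|)$ for $\Re\lambda\ge0$ immediately gives the uniform $|\lambda|^{-5/2}$ bound, and the rescaling $r=\rho t$ with dominated convergence extracts the exact leading constant $\tfrac{2}{\pi^3}B(\tfrac12,\tfrac52)=\tfrac{3}{4\pi^2}$, which the paper does not compute. One cosmetic remark: your limit $f_j\to\psi_2(j+1)=-\tfrac1{\pi^3}\tfrac{\pi(j+1)/2}{\sinh(\pi(j+1)/2)}$ agrees with the paper's own computation of $\psi_2$ but carries the opposite sign to the displayed limit in the theorem statement; this is a sign slip in the statement, not in your proof.
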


\noindent
{\bf Remark.}  Statement (1) above was already obtained in 
Proposition \ref{3.3} and we repeated it here for the sake of
completeness.  Statement (2) shows that $K_{-1}$ is singular as $z$ and $w$
tend to the same point on the boundary of $\bU$ and that for each $j$
the (possible)
singularity of $K_j(z,w)$ is not worse that
$\big(-i(z-\overline{w})\big)^{-5/2}$.  
Finally, (3)
describes the behavior of $K_j(z,w)$ as $\bU\ni z\to\infty$.\bigskip

\proof
Owing to Lemma \ref{technical-lemma}, in order to prove the first 
statement it suffices to set $f_j(\lambda) =\Psi_{2,\lambda}(j+1)$ 
and to compute $\psi_2(\xi) = - (1/[2\pi^3]) I_0(\xi)$. We observe
that $I_0(0) = \int_\bbR  1/[\cosh^2 s] \, ds = 2$. 
For all $\xi \in \bbR$ other than $0$,
we make use of the fact that the integrand in $I_0(\xi)$
extends to
$\bbC$ except the points $\left\{i k \frac \pi 2\right\}_{k \in
  \bbZ}$.
If we integrate along the rectangle through $-R,R,R+i\pi,-R+i\pi$
and we let $R\to+\infty$ in $\bbR$ we may conclude that 
$$
I_0(\xi) =\int_\bbR \frac{e^{-i\xi s}}{\cosh^2 s} ds = \frac {2\pi i}
{1-e^{\xi\pi}} \Res_{i \pi/2}\left(\frac{e^{-i\xi
      s}}{\cosh^2 s}\right). 
$$
Taking into account that $\cosh(z+i \frac \pi 2) = i \sinh z$ and
that $1/\sinh^2 z - 1/z^2$ is holomorphic near $z=0$,
we obtain that 
$$
\Res_{i \pi/2}\left(\frac{e^{-i\xi s}}{\cosh^2 s}\right)
=-e^{\xi \pi/ 2} \Res_0\left(\frac{e^{-i\xi z}}{\sinh^2 z}\right)
= -e^{\xi \pi/ 2}\Res_0\left(\frac{e^{-i\xi z}}{z^2}\right) 
= e^{\xi\pi/ 2} i \xi\, .
$$
Therefore
$$
\psi_2(\xi) 
= -\frac{1}{2\pi^3} I_0(\xi)
= -\frac{1}{\pi^3}\frac {\pi e^{\xi\pi/ 2} \xi} {e^{\xi\pi}-1} 
= -\frac{1}{\pi^3} \frac{\xi \pi/ 2}{\sinh(\xi\pi/ 2)}
$$
for all $\xi \in \bbR$.

As for the behavior of 
$f_j(\lambda) =\Psi_{2,\lambda}(j+1) = - \widehat A_{2,\lambda}(j+1)$ 
near the finite boundary, we observe that
$$
A_{2,\lambda}(s) = \frac{1}{2\pi^3\cosh^2s}\Bigg[\Big(1+\frac{2\log\cosh
      s}\lambda\Big)^{-2} + \frac 4 \lambda \Big(1+\frac{2\log\cosh
      s}\lambda\Big)^{-3} \Bigg]
$$
admits a transform even if $\Re\lambda = 0, \Im\lambda \neq 0$. 
Moreover, we shall prove that $\sqrt{\lambda}\, \widehat A_{2,\lambda}(\xi)$ 
stays bounded as $\lambda \to 0$ and that 
$\lim_{\bbR^+ \ni \lambda \to 0} \sqrt{\lambda}\,A_{2,\lambda}(0)>0$. 
As $\lambda \to 0$, the only relevant part in 
$\sqrt{\lambda}\, \widehat A_{2,\lambda}(\xi)$ is
\begin{align*}
&\frac{2}{\pi^3\sqrt{\lambda}}\int_\bbR 
\frac{e^{-i\xi s}}{\cosh^2 s}\Big(1+\frac{2\log\cosh s}\lambda\Big)^{-3} ds \\
=&\frac{4}{\pi^3\sqrt{\lambda}}\int_0^{+\infty} \frac{\cos(\xi  s)}{\cosh^2s}
\Big(1+\frac{2\log\cosh
      s}\lambda\Big)^{-3} ds \\
=& \frac{4}{\pi^3\sqrt{\lambda}}\int_0^{1}\cos(\xi \arctanh t) 
\Big(1-{\log(1-t^2)}/\lambda\Big)^{-3} dt.
\end{align*}
 Now $ -\log(1-t^2) = \sum_{n\geq 1}\frac{t^{2n}}{n}\geq t^2$ implies that
$$
\Big\vert1-{\log(1-t^2)}/\lambda\Big\vert^2 \ge 
\Bigg(1+ t^2\frac{\Re\lambda}{|\lambda|^2}\Bigg)^2 + 
\Bigg(t^2\frac{\Im\lambda}{|\lambda|^2}\Bigg)^2 \ge 1 
+ \frac{t^4}{|\lambda|^2}
$$
for all $t \in (0,1)$. Hence, for appropriate positive constants, 
\begin{align*}
\Big\vert \sqrt{\lambda}\, \widehat A_{2,\lambda}(\xi) \Big\vert 
& \le
 \frac{C}{\sqrt{\lambda}} \int_0^{1}\Big(1 + \frac{t^4}{|\lambda|^2}\Big)^{-\frac 3 2}\,  dt\\
& \le \frac{C}{\sqrt{\lambda}} \int_0^{\sqrt{|\lambda|}} dt 
+ \frac{C}{\sqrt{\lambda}} \int_{\sqrt{|\lambda|}}^1 
\Big(1 + \frac{t^4}{|\lambda|^2}\Big)^{-\frac 3 2}
\Big(\frac{t}{\sqrt{|\lambda|}} \Big)^3\, dt \\
& \le C + \frac{C} {4} \int_{0}^1\Big(1+
\tau\Big)^{-\frac 3 2} \, d\tau \\
& \le  C\, .
\end{align*}
Moreover, for $\lambda \in \bbR^+$ sufficiently small and $t \in
(0,\sqrt{\lambda})$, the function 
$$
1-{\log(1-t^2)}/\lambda  = 1+ \frac 1{\lambda}\sum_{n\geq 1}\frac{t^{2n}}{n} 
\le 1 + \sum_{n\geq 1}\frac{\lambda^{n-1}}{n}
$$
takes values in an interval $(0,\varepsilon)$ with $\varepsilon>0$, so that
\begin{align*}
\Big| \sqrt{\lambda}\, \widehat A_{2,\lambda}(0) \Big|
& = 
\frac{4}{\pi^3\sqrt{\lambda}}\int_0^{1}\Big(1-{\log(1-t^2)}/\lambda\Big)^{-3}
dt + o(\sqrt{\lambda}) \\
& 
\ge
\frac{4}{\pi^3\sqrt{\lambda}}\int_0^{\sqrt{\lambda}}\Big(1-{\log(1-t^2)}/\lambda\Big)^{-3}
dt + o(\sqrt{\lambda}) \\
& \ge C\, ,
\end{align*}
for an appropriate positive constant $C$. 
\qed
\medskip

\vskip24pt
\section{Back to the worm domain}
\vskip12pt

We can now express the Bergman kernel of the ``unwound'' worm
$\cU$ as a series.  In this part of the paper we write 
$z=(z_1,z_2)$, $w=(w_1,w_2)$
to denote points in $\bbC^2$.  This change of notation with respect to the
previous sections should cause no confusion.   Recall that $\cU$ is
defined in \eqref{U-def}.

\begin{prop}\label{cU-kernel}	\sl
The Bergman kernel of $\cU$ is given by 
\begin{align}
K_{\cU}(z,w) &= \frac 1 {z_2 \overline w_2} \sum_{j \in \bbZ} K_j
(z_1,w_1) \Big(e^{-\frac12 (z_1+\overline w_1)} z_2 \overline w_2\Big)^{j+1}\, ,
\end{align}
for  $z = (z_1,z_2)$, $w= (w_1,w_2)$ in $\cU$, where for each
$w\in\cU$ fixed (or $z\in\cU$ fixed) the series
converges in the $L^2(\cU)$-norm, absolutely and uniformly on compact
subsets of 
$\cU$.    
\end{prop}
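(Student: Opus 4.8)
The plan is to build the kernel of $\cU$ out of the fiberwise kernels $K_j$ using the orthogonal decomposition $A^2(\cU) = \bigoplus_{j\in\bbZ}\cH^j(\cU)$ established in Section~\ref{unwound}. First I would recall that, by the lemma on $L_j$ and the corollary on $M_j$, the composition $M_j\circ L_j$ is an isometric isomorphism $\cH^j(\cU)\to A^2(\bU,\alpha_j)$ sending $F(w_1,w_2)=f(w_1)w_2^j$ to $f(\zeta)e^{\frac{j+1}{2}\zeta}$. Pulling the reproducing kernel $K_j$ of $A^2(\bU,\alpha_j)$ back through this isometry, one gets that the reproducing kernel of the subspace $\cH^j(\cU)$ is
\begin{equation*}
K_{\cH^j}(z,w) = \frac{1}{z_2\overline{w_2}}\,K_j(z_1,w_1)\Big(e^{-\frac12(z_1+\overline{w_1})}z_2\overline{w_2}\Big)^{j+1}\,,
\end{equation*}
since pulling back a kernel under a unitary $U$ replaces $K(\zeta,\eta)$ by $\overline{U^{-1}}$ acting in $w$ composed with $U^{-1}$ acting in $z$; here $L_j$ contributes the factors $w_2^j$ (in $z$) and $\overline{w_2}^{j}$ (in $w$) while $M_j$ contributes $e^{\frac{j+1}{2}z_1}$ and $e^{\frac{j+1}{2}\overline{w_1}}$. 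Summing over $j$ then gives the claimed formula, provided we justify termwise summation of reproducing kernels of mutually orthogonal closed subspaces whose Hilbert sum is $A^2(\cU)$ — this is the standard fact $K_{\cU} = \sum_j K_{\cH^j}$ with convergence as described.

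Next I would nail down the convergence assertions. Fix $w\in\cU$ and note that $K_{\cH^j}(\cdot,w)$ is the orthogonal projection of $K_{\cU}(\cdot,w)$ onto $\cH^j(\cU)$, so $\sum_j\|K_{\cH^j}(\cdot,w)\|_{L^2(\cU)}^2 = \|K_{\cU}(\cdot,w)\|_{L^2(\cU)}^2 = K_{\cU}(w,w)<\infty$; this gives $L^2(\cU)$-convergence of the series for each fixed $w$ (and symmetrically for each fixed $z$). For absolute and locally uniform convergence on $\cU$, I would use the standard pointwise bound on a Bergman kernel restricted to a compact set: for $K$ a reproducing kernel and $z$ in a compact $E\subset\cU$, $|K_{\cH^j}(z,w)|\le \|K_{\cH^j}(\cdot,w)\|_{L^2}\cdot\sup_{z\in E}\|K^z_{\cH^j}\|_{L^2}^{1/2}$-type estimates, but more cleanly: $|F(z)|\le C_E\|F\|_{L^2(\cU)}$ for $F\in A^2(\cU)$, $z\in E$, applied to $F=K_{\cH^j}(\cdot,w)$, reduces the problem to summability of $\|K_{\cH^j}(\cdot,w)\|_{L^2(\cU)}$ uniformly for $w$ in a compact set. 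Combining with the Cauchy–Schwarz split $\sum_j a_j b_j \le (\sum a_j^2)^{1/2}(\sum b_j^2)^{1/2}$, where $a_j = \|K_{\cH^j}(\cdot,w)\|_{L^2}$ and $b_j$ controls the $z$-dependence on $E$, and with $\sum_j a_j^2 = K_{\cU}(w,w)$ bounded on compacta, yields the result. The genuinely quantitative input is Corollary~\ref{ptwise-estimate}: the root estimate $\limsup_j |K_j(z_1,w_1)|^{1/|j+1|}\le e^{-b_\lambda}$ with $\lambda=-i(z_1-\overline{w_1})$, together with the elementary observation that for $(z_1,w_2)\in\cU$ one has $\big|e^{-\frac12(z_1+\overline{w_1})}z_2\overline{w_2}\big| = e^{\frac{u_1-u_1'}{2}\cdot 0}\cdot$ (compute: $|z_2|^2 = e^{\log|z_2|^2}$ lies in the interval dictated by $\cU$'s definition, so $\big|e^{-\frac12 z_1}z_2\big|^2 = e^{-u_1}|z_2|^2 < e^{\arccos(e^{-v_1})}$), which shows the geometric weight in the $j$-th term is dominated by $e^{(b_\lambda-\delta)|j+1|}$ for suitable $\delta>0$, making the series converge geometrically.

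The main obstacle I anticipate is the careful bookkeeping in the second paragraph: verifying that the geometric factor $\big|e^{-\frac12(z_1+\overline{w_1})}z_2\overline{w_2}\big|$ is, for $z,w$ ranging over a compact subset of $\cU$, strictly smaller than $e^{b_\lambda}$ uniformly — i.e., matching the explicit description of $b_\lambda$ in \eqref{b} against the defining inequalities $\big|u-\log|z_2|^2\big|<\arccos(e^{-v})$ of $\cU$. Concretely one must check that $\Re\lambda = \Re\big(-i(z_1-\overline{w_1})\big) = v_1+v_1'$ and $\Im\lambda = -(u_1-u_1')$, and then that $\arccos(e^{-(v_1+v_1')/2})$ and $|u_1-u_1'|/2$ genuinely dominate the logarithm of the geometric weight; this is where the precise geometry of the unwound worm enters and where a sign error or a factor of $2$ would be fatal. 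Once this inequality is in hand, the $L^2$, absolute, and locally uniform convergence all follow from the orthogonality bound $\sum_j\|K_{\cH^j}(\cdot,w)\|^2 = K_{\cU}(w,w)$ combined with the interior estimate for $A^2$ functions, and the identification of the sum with $K_{\cU}$ is immediate from the Hilbert-space decomposition.
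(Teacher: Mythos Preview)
Your proposal is correct and follows the paper's route: pull back $K_j$ through the unitary $M_jL_j$ to get the kernel of $\cH^j(\cU)$, then sum over $j$. The $L^2$-convergence argument is essentially the same --- the paper phrases it as weak convergence of the partial sums plus orthogonality, which is equivalent to your observation that $K_{\cH^j}(\cdot,w)$ is the orthogonal projection of $K_\cU(\cdot,w)$ onto $\cH^j$.

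The one difference worth noting: you invoke Corollary~\ref{ptwise-estimate} and the geometric bound $\big|e^{-\frac12(z_1+\overline{w_1})}z_2\overline{w_2}\big|<e^{b_\lambda}$ to secure absolute and locally uniform convergence, but the paper does not use these here at all. It simply asserts that $L^2$ convergence yields absolute and locally uniform convergence on compacta; the implicit justification is the reproducing-kernel Cauchy--Schwarz bound
\[
\sum_j\big|K_{\cH^j}(z,w)\big|\le\Big(\sum_j K_{\cH^j}(z,z)\Big)^{1/2}\Big(\sum_j K_{\cH^j}(w,w)\Big)^{1/2}=K_\cU(z,z)^{1/2}K_\cU(w,w)^{1/2},
\]
which your own Cauchy--Schwarz sketch (with $a_j=\|K_{\cH^j}(\cdot,w)\|$ and $b_j=\|K_{\cH^j}(\cdot,z)\|$) already contains. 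The root-test estimate and the geometric inequality you flag as the ``main obstacle'' are therefore unnecessary for this proposition; the paper deploys exactly that inequality in the proof of the \emph{next} result, Theorem~\ref{cU-kernel-boundary}, to push convergence out to the boundary. So your concern is well-placed but belongs one step later.
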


\proof
Considering the decomposition $A^2(\cU)=\bigoplus_{j\in \bbZ}
\cH^j (\cU)$ and  the isometry $M_jL_j: \cH^j(\cU)\to
A^2(\bU,\alpha_j)$ given by
\begin{align*}
M_jL_jF(w_1,w_2) = F(w_1,w_2) w_2^{-j} e^{[(j+1)w_1]/2}\, ,
\end{align*}
we obtain that 
 the Bergman kernel  of $\cH^j(\cU)$ is given by
$$
U_j(z,w) = K_j (z_1,w_1) e^{- [(j+1)/2] (z_1+\overline w_1)} (z_2
\overline w_2)^j\, .
$$
We are going to show that  the sum 
$\sum_{j \in \bbZ} U_j(\cdot,w)$ converges to $K_\cU(\cdot,w)$ in
$L^2(\cU)$ for any $w\in\cU$ fixed.  This will imply that the series
converges also absolutely and uniformly on compact subsets.

It is easy to see that $\sum_{|j|\le n} U_j(\cdot,w)$ weakly converges
to $K_\cU(\cdot,w)$, as $n\to +\infty$, for $w\in\cU$ fixed.  For
let $\cP_\cU$ denote the Bergman projection on $\cU$ and
let $f\in L^2(\cU)$.  Then its projection on $A^2(\cU)$ is given by
$$
\cP_\cU f(w) = 
\la f,K_\cU (\cdot,w) \ra
=  \sum_{j\in\bbZ} f_j(w)
$$
with $f_j\in \cH^j$.   Now
$$
\big\la f, \sum_{|j|\le n} U_j(\cdot,w)\big\ra
=   \sum_{|j|\le n} \la f, U_j(\cdot,w)\ra = \sum_{|j|\le n} f_j(w)
\to \cP_\cU f(w) 
$$
as $n\to+\infty$.  Hence there exists $C>0$ independent of $n$ such
that 
$$
\sum_{|j|\le n} \| U_j(\cdot,w) \|_{L^2(\cU)}^2 =
\big\| \sum_{|j|\le n} U_j(\cdot,w) \big\|_{L^2(\cU)}^2 \le C\,.
$$
Therefore $\sum_{|j|\le n}  U_j(\cdot,w) $ converges in $L^2(\cU)$,
necessarily to 
$K_\cU (\cdot,w)$. 
\qed
\medskip \\

We now study the pointwise regularity of $K_{\cU}$ at the boundary. 
In the statement, $\bU_\varepsilon = \{\zeta: \Im \zeta >\varepsilon\}$ 
with $\varepsilon>0$.
Moreover, we set
\begin{multline}\label{Sigma}
\Sigma=
\Big\{ (z,w)\in \p\cU\times\p\cU: \exists\ v\geq 0 \mathrm{\ s.t.\ } \Im z_1= \Im w_1=v,\\
\Re z_1 -\log|z_2|^2= \Re w_1 -\log|w_2|^2 = \pm \arccos (e^{-v}),\, |\log|z_2|^2-\log|w_2|^2| \leq 2\arccos(e^{-v})
\Big\}\, .
\end{multline}

\noindent
{\bf Remark.} The set $\Sigma$ contains the diagonal $\Delta$ 
of $\p\cU\times\p\cU$; but also by other 
points $(z,w)$ of $\p\cU\times\p\cU$, e.g., those such that
$z_1 = w_1\in \p \bU$ and $|z_2|=|w_2|$. See Figure 2 for other cases.
\begin{figure}[h]
  \begin{center}
  \includegraphics[height=6cm]{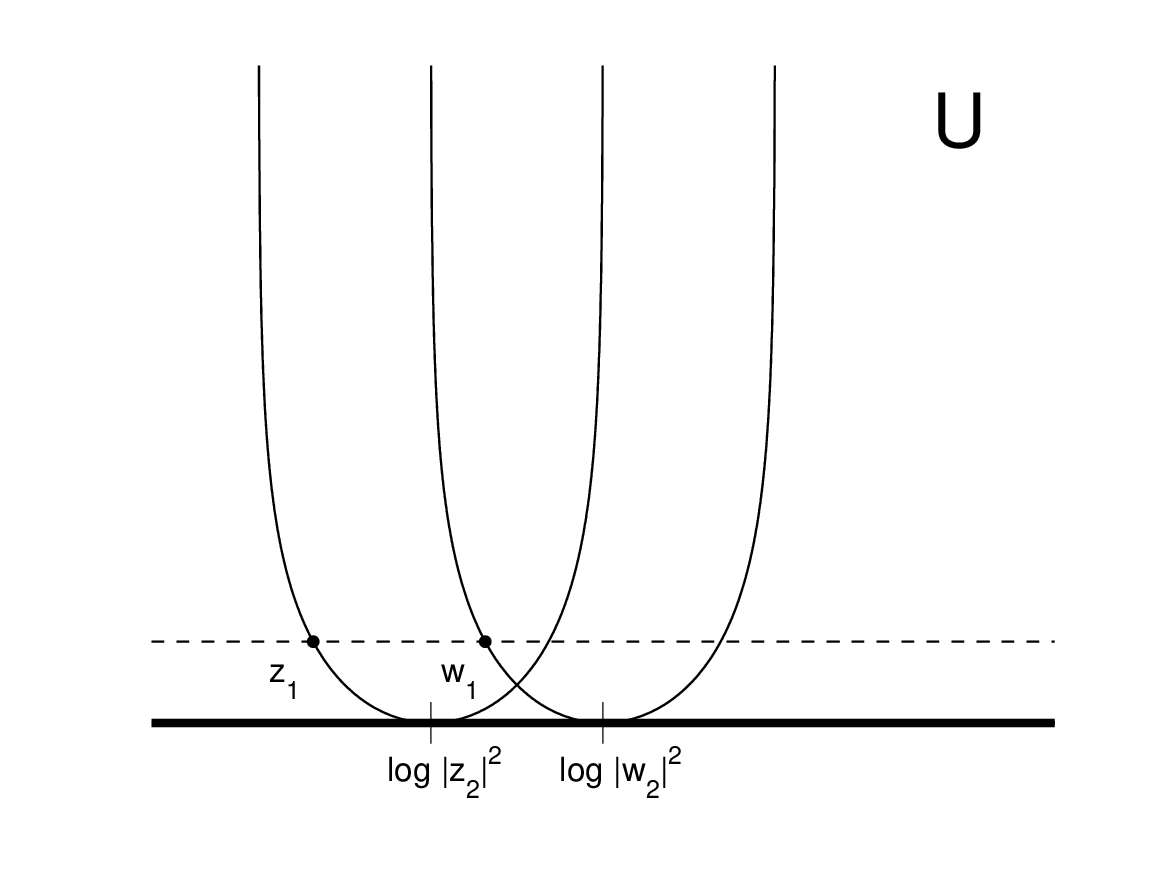}
\end{center}
  \caption{\footnotesize The set $\Sigma$ is defined to include $(z,w)$ if and only if: $z_1, w_1 \in \overline{\bU}$ lie on the same horizontal line; $z_1, w_1$ belong both to the left arcs (or both to the right arcs) of the boundaries of $\pi_1(\pi_2^{-1}(z_2)), \pi_1(\pi_2^{-1}(w_2))$; $w_1$ belongs to $\pi_1(\pi_2^{-1}(z_2))$ or $z_1 \in\pi_1(\pi_2^{-1}(w_2))$.}
\end{figure}
\bigskip

\begin{thm}\label{cU-kernel-boundary}	
(1) The kernel function $K_{\cU}(z, w)$ extends
holomorphically in $z$ and antiholomorphically in $w$ near each 
point $(z,w)$ in $\overline\cU \times \overline\cU\setminus \Sigma$. 

\noindent (2) There exist a holomorphic function
$G: \cU\times \cU \to \bbC$ with
\begin{align}
K_{\cU}(z,w)= \frac{G(z,w)}{z_2 \overline w_2 (z_1-\overline w_1)^2} \, ,
\end{align}
and a holomorphic function $g$ on 
$A:= \{\zeta : e^{- \pi/2}<|\zeta|<e^{\pi/ 2}\}$
such that:
\begin{itemize}
\item[$(a)$]
$G(z,w)$ stays bounded as either $z_1$ or $w_1$ tends to $\infty$;
\item[$(b)$] if $z_1 - \overline w_1 \to \infty$ within a half-plane
  $\bU_\varepsilon$ and if $e^{- \frac12 (z_1+\overline w_1)} z_2
  \overline w_2\to \zeta \in A$  then $G(z,w) \to g(\zeta)$;
\item[$(c)$] $g(\zeta) -
[e^{- \pi/ 2} \zeta]/[\pi^2(1-e^{-\pi/ 2}\zeta)^2]
- [e^{\pi/ 2} \zeta]/[\pi^2(1-e^{\pi/ 2}\zeta)^2]$ extends
holomorphically to a neighborhood of $\overline{A}$. 
\end{itemize}
As a consequence, $K_{\cU}$ 
tends to $0$ near each point $(z,w)$ or $(w,z)$ with 
$z_1=\infty, z_2\in \bbC^*\cup\{\infty\}, w \in \cU$. 
\end{thm}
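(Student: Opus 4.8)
The plan is to read everything off the series of Proposition~\ref{cU-kernel} combined with the one‑variable results of Theorem~\ref{kernels}. Put $\lambda:=-i(z_1-\overline w_1)$ and $\zeta:=e^{-\frac12(z_1+\overline w_1)}z_2\overline w_2$, both holomorphic in $z$ and anti‑holomorphic in $w$. Proposition~\ref{cU-kernel} and Theorem~\ref{kernels} give at once
\[
K_\cU(z,w)=\frac{G(z,w)}{z_2\overline w_2(z_1-\overline w_1)^2},\qquad G(z,w):=\sum_{j\in\bbZ}f_j(\lambda)\,\zeta^{j+1},
\]
so $G$ is holomorphic in $z$ and anti‑holomorphic in $w$ on $\cU\times\cU$ and the whole theorem becomes a statement about this series. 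The geometric lemma I would prove first is that, at every finite point of $\overline\cU\times\overline\cU$ (where automatically $z_2,w_2\neq0$, since $z_2\to0$ in $\cU$ forces $\Re z_1\to-\infty$),
\[
\bigl|\log|\zeta|\bigr|\ \le\ \tfrac12\bigl(\arccos(e^{-\Im z_1})+\arccos(e^{-\Im w_1})\bigr)\ \le\ \arccos\!\bigl(e^{-\Re\lambda/2}\bigr)\ \le\ b_\lambda:
\]
the first step uses $\log|\zeta|=\tfrac12[(\log|z_2|^2-\Re z_1)+(\log|w_2|^2-\Re w_1)]$ and the description of $\cU$ in Proposition~\ref{prop-U}; the second uses the strict concavity of $v\mapsto\arccos(e^{-v})$ on $[0,\infty)$ (whose second derivative is $-e^{2v}(e^{2v}-1)^{-3/2}$) together with $\Re\lambda=\Im z_1+\Im w_1$; the third is \eqref{b}. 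I would then check, by tracking the equality cases (both points on $\p\cU$; $\Im z_1=\Im w_1$; the two arc‑conditions with a common sign; $|\log|z_2|^2-\log|w_2|^2|\le 2\arccos(e^{-v})$), that $\bigl|\log|\zeta|\bigr|=b_\lambda$ holds \emph{exactly} on the set $\Sigma$ of \eqref{Sigma}, and that the points with $\lambda=0$ (which force $z_1=\overline w_1\in\bbR$ and both fibres to degenerate) also lie in $\Sigma$. Equivalently, $(z,w)\notin\Sigma$ iff $\lambda\neq0$ and $\bigl|\log|\zeta|\bigr|<b_\lambda$.

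For part~(1), fix $(z^0,w^0)\in\overline\cU\times\overline\cU\setminus\Sigma$, so $\lambda^0\neq0$ and $|\log|\zeta^0||<b_{\lambda^0}$. By the proof of Theorem~\ref{kernels}, $f_j(\lambda)=-\widehat{A_{2,\lambda}}(j+1)$ with $A_{2,\lambda}$ holomorphic in $\lambda$ on the slit plane $\bbC\setminus(-\infty,0]$, hence all the $f_j$ are holomorphic on a fixed complex neighbourhood of $\lambda^0$. Next I would upgrade Corollary~\ref{ptwise-estimate} to a locally uniform estimate: $\lambda\mapsto b_\lambda$ is continuous with $b_{\lambda^0}>0$, so one can take a small neighbourhood $W$ of $\lambda^0$ and a number $b$ with $|\log|\zeta^0||<b<\inf_{W}b_\lambda$ for which the $H^2(S_b)$‑norms of $\phi_\lambda$ and $\widetilde\phi_\lambda$ stay bounded uniformly over $\lambda\in W$; running the Paley--Wiener/Sobolev argument of Corollary~\ref{ptwise-estimate} uniformly then yields $\sup_{\lambda\in W}|f_j(\lambda)|\,e^{b|j+1|}\to0$. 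Shrinking the $(z,w)$‑neighbourhood so that $\lambda\in W$ and $|\log|\zeta||<b$ there, the series $\sum_jf_j(\lambda)\zeta^{j+1}$ converges uniformly on it and defines a function holomorphic in $z$, anti‑holomorphic in $w$; dividing by $z_2\overline w_2(z_1-\overline w_1)^2$, which is nonzero near $(z^0,w^0)$ since $\lambda^0\neq0$ and $z_2^0,w_2^0\neq0$, gives the asserted extension of $K_\cU$.

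For part~(2), the representation of $K_\cU$ with $G$ holomorphic on $\cU\times\cU$ is the display above. I would set $g(\zeta):=\frac1{\pi^3}\sum_{k\in\bbZ}\frac{\pi k/2}{\sinh(\pi k/2)}\zeta^{k}$, whose coefficients are the limits $\lim_{\overline{\bH}_\eps\ni\lambda\to\infty}f_{k-1}(\lambda)$ from Theorem~\ref{kernels} and whose $|k|$‑th roots tend to $e^{-\pi/2}$, so $g$ is holomorphic on $A$. Using $\frac{\pi k/2}{\sinh(\pi k/2)}=\pi k\sum_{m\ge1}e^{-\pi k(m-1/2)}$ for $k>0$ and evenness in $k$, summing the geometric‑type series gives
\[
g(\zeta)=\frac1{\pi^3}+\frac1{\pi^2}\,\frac{e^{-\pi/2}\zeta}{(1-e^{-\pi/2}\zeta)^2}+\frac1{\pi^2}\,\frac{e^{\pi/2}\zeta}{(1-e^{\pi/2}\zeta)^2}+h(\zeta),
\]
with $h$ holomorphic on $\{e^{-3\pi/2}<|\zeta|<e^{3\pi/2}\}\supset\overline A$, which is $(c)$. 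For $(a)$ and $(b)$ I would invoke Lemma~\ref{technical-lemma} with $N=3$: $f_j(\lambda)=c_j+(z_1-\overline w_1)^{-1}\Psi_{3,\lambda}(j+1)$ with $\Psi_{3,\lambda}\to\psi_3$ in $\cS(\bbR)$ as $\lambda\to\infty$ in $\overline{\bH}_\eps$, hence
\[
G(z,w)-g(\zeta)=\frac1{z_1-\overline w_1}\sum_{j\in\bbZ}\Psi_{3,\lambda}(j+1)\,\zeta^{j+1}.
\]
As $z_1-\overline w_1\to\infty$ within $\bU_\eps$ with $\zeta\to\zeta_0\in A$ — or, for $(a)$, as $z_1\to\infty$ with $w$ in a fixed compact subset of $\cU$, so $\zeta$ stays in a compact subset of $A$ because $\Im w_1$ is bounded — the remainder sum stays bounded while $(z_1-\overline w_1)^{-1}\to0$, yielding $G\to g(\zeta_0)$ in $(b)$ and boundedness of $G$ in $(a)$. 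The consequence is then immediate: near a point with $z_1=\infty$, $w\in\cU$ fixed, $z_2\in\bbC^*\cup\{\infty\}$, the numerator $G$ is bounded by $(a)$, $z_2\overline w_2$ stays away from $0$, and $(z_1-\overline w_1)^2\to\infty$, so $K_\cU\to0$.

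The step I expect to be the real obstacle is the pair of \emph{uniform‑in‑$\lambda$} decay estimates that both parts of the argument rest on: the locally uniform version of Corollary~\ref{ptwise-estimate} in part~(1), and the boundedness of $\sum_j\Psi_{3,\lambda}(j+1)\zeta^{j+1}$ as $\lambda\to\infty$ in part~(2). Each amounts to controlling the $H^2(S_b)$‑norm of $\phi_\lambda$ (respectively, an exponential‑decay bound for $\widehat{A_{3,\lambda}}$) uniformly in $\lambda$, and the awkward point is the interaction between the width $b$ one can afford and the location of $\lambda$: the admissible width is governed by $b_\lambda$, which depends on $\Re\lambda$ and $\Im\lambda$ through the piecewise formula \eqref{b}, so the estimates must be organised according to whether $\Re\lambda$ stays bounded or tends to infinity, and the Stirling and contour‑shift computations of the preceding section have to be re‑run with uniformity in mind. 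A secondary, bookkeeping obstacle is the exact matching of the equality case $|\log|\zeta||=b_\lambda$ with the defining conditions of $\Sigma$.
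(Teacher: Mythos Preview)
Your proposal is correct and follows essentially the same route as the paper: the geometric inequality $|\log|\zeta||\le b_\lambda$ via the concavity of $v\mapsto\arccos(e^{-v})$, the identification of the equality case with $\Sigma$, and the explicit computation of $g(\zeta)=\frac{1}{\pi^3}\sum_k\frac{k\pi/2}{\sinh(k\pi/2)}\zeta^k$ and its singularities on $\partial A$ are exactly what the paper does. The paper's proof of part~(2) simply cites the pointwise limits $f_j(\lambda)\to c_j$ from Theorem~\ref{kernels} and passes to the limit term-by-term in the series, leaving the interchange implicit; your use of the $N=3$ expansion from Lemma~\ref{technical-lemma} to write $G-g(\zeta)=(z_1-\overline w_1)^{-1}\sum_j\Psi_{3,\lambda}(j+1)\zeta^{j+1}$ is a mild refinement that makes the remainder control for $(a)$ and $(b)$ more transparent, but is not a different argument. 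Likewise, the locally-uniform upgrade of Corollary~\ref{ptwise-estimate} that you flag is indeed needed for the holomorphic extension in part~(1); the paper uses it tacitly, and the continuity of $\lambda\mapsto b_\lambda$ together with the explicit $H^2(S_b)$ bounds in the proof of that corollary supply it without new ideas.
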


\proof 
We wish to study the behavior of
$$
\sum_{j \in \bbZ} K_j(z_1,w_1) \Big(e^{-\frac12 (z_1+\overline w_1)}
  z_2 \overline w_2\Big)^{j+1} 
$$
as $z$ and $w$ in $\cU$ approach the boundary. It follows from Corollary 
\ref{ptwise-estimate} that for all $(z_1,w_1)\in
\overline{\bU}\times\overline{\bU}\setminus\Delta$
$$
\limsup_{j\to\pm\infty} |K_j(z_1,w_1)|^{1/|j+1|} \le e^{-b_\lambda}\, ,
$$
where $\lambda =-i(z_1-\overline w_1)$ and $b_\lambda$ is as in 
\eqref{b}. 
We will now complete the study of convergence, proving that
\begin{equation}\label{claim1}
e^{-b_\lambda}<\Big |e^{-\frac12 (z_1+\overline w_1)} z_2 \overline w_2 \Big| < 
e^{b_\lambda}\, ,
\end{equation}
for all $(z,w) \in \overline\cU \times \overline\cU\setminus\Sigma$.
For $(z,w)\in\cU\times\cU$ we have that
\begin{align*}
 e^{- \frac12 (z_1+\overline w_1)} z_2 \overline w_2 
&= 
e^{- \frac12  (z_1 +\overline w_1)}
e^{\frac12 (\log |z_2|^2+\log |w_2|^2)}
\frac{z_2\overline w_2}{|z_2\overline w_2|} \\ 
&= \exp\Big\{ {\textstyle\frac12}\big(\log|z_2|^2-\Re z_1 
+ \log|w_2|^2-\Re w_1 \big)
-{\textstyle \frac{i}{2}}\big( \Im z_1- \Im w_1\big)\Big\} 
\frac{z_2\overline w_2}{|z_2\overline w_2|}
\, , 
\end{align*}
where $\big|\log|z_2|^2 -\Re z_1\big| < \arccos(e^{-\Im z_1})$
and $\big|\log|w_2|^2-\Re w_1\big| < \arccos(e^{-\Im w_1})$.
Hence, using the concavity of the function $r \mapsto \arccos(e^r)$ we
obtain 
\begin{align}
\Big |e^{- \frac12 (z_1+\overline w_1)} z_2 \overline w_2 \Big|
& <
\exp\big\{ {\textstyle\frac12}
\big( \arccos(e^{-\Im z_1}) +\arccos(e^{-\Im w_1}) \big) \big\} \notag\\
& \le \exp\big\{\arccos\big (e^{-\frac12(\Im z_1+\Im w_1)}\big)
\big\}\notag \\
&  = \exp\big\{\arccos\big( e^{- \frac12\Re \lambda}\big)
\big\}\notag \\
& \leq e^{b_\lambda}
\, ; 
\end{align}
and similarly 
$\Big |e^{- \frac12 (z_1+\overline w_1)} z_2 \overline w_2 \Big|
> \exp\big\{-\arccos\big (e^{- \frac12\Re \lambda}\big)\big\}\geq 
e^{- b_\lambda}$ for all $(z,w)\in \cU \times \cU$.

The first inequality in the display above remains 
strict as either $z$ or $w$ tends to $\p\cU$ and if either $z_1$ or
$w_1$ tends to infinity.

Now let us consider $z$ and $w$ in $\p\cU$.  The equality 
$$
\Big |e^{- \frac12 (z_1+\overline w_1)} z_2 \overline w_2 \Big| = 
\exp\big\{\pm\arccos\big (e^{- \frac12\Re \lambda}\big)\big\}
$$ 
holds if and only if there exists $v\geq0$ such that
\begin{equation}\label{specialcase}
\Im z_1 = \Im w_1 = v \quad \text{and}\quad 
\log|z_2|^2 -\Re z_1 = \log|w_2|^2-\Re w_1 =
\pm \arccos(e^{-v})\, .
\end{equation}
According to formula \eqref{b}, 
$\arccos\big (e^{- [1/2]\Re \lambda}\big)=b_\lambda$
if and only if $\Im |\lambda|/2 \leq \arccos\big (e^{- [1/2]\Re \lambda}\big)$, 
which is equivalent in the special case \eqref{specialcase} to 
$\big|\log| z_2|^2 - \log |w_2|^2\big| \leq 2\arccos\big (e^{-v}\big)$. 
This proves \eqref{claim1} and also part (1) of the statement.

In order to prove (2) we further study the points at infinity by means of the expansion
\begin{align*}
K_{\cU}(z,w)
&=\sum_{j \in \bbZ} \frac {f_j(-i(z_1-\overline w_1))}{(z_1-\overline
  w_1)^2z_2 \overline w_2} \big(e^{- 1/2 (z_1+\overline w_1)}
  z_2 \overline w_2\big)^{j+1} \, , 
\end{align*}
where $f_j(\lambda) \to [k\pi/2]/[\pi^3 \sinh(k\pi/ 2)]$ as $\lambda
\to \infty$ within a half-plane 
$\bH_\eps$. If we set $G(z,w) = z_2 \overline w_2
(z_1-\overline w_1)^2 K_{\cU}(z,w)$, then 
\begin{align*}
&\lim_{e^{- \frac12 (z_1+\overline w_1)} z_2 \overline w_2\to \zeta}
G(z,w) = \sum_{j \in \bbZ} f_j(-i(z_1-\overline w_1)) \zeta^{j+1}  
\end{align*}
for 
$$
\exp\big\{
-\arccos\big(e^{-(\Im z_1+\Im w_1)/2}\big) \big\} <|\zeta|<\exp\big\{
\arccos\big(e^{-(\Im z_1+\Im w_1)/2}\big) \big\}\,.
$$

Moreover, $\sum_{j \in \bbZ} f_j(\lambda) \zeta^{j+1}$ tends
to $g(\zeta)=\frac{1}{\pi^3}\sum_{k \in \bbZ} \frac{k
  \pi/2}{\sinh(k\frac\pi 2)} \zeta^k$ as $\lambda\to \infty$   
within a half-plane $\bH_\eps$. 
We have that
\begin{align*}
\sum_{k>0} \frac{k\pi/ 2}{\sinh(k\pi/ 2)} \zeta^k 
&= 
\pi \zeta\frac{\p}{\p \zeta}\sum_{k>0} \frac 1
{e^{k\pi/ 2} - e^{-k\pi/ 2}}  \zeta^k = \pi
\zeta\frac{\p}{\p \zeta}\sum_{k>0} \frac 1 {1 - e^{-k\pi}}
(e^{-\pi/ 2}\zeta)^k\\ 
&= \pi \zeta\frac{\p}{\p \zeta}\sum_{k>0, m \geq 0}
e^{-km\pi}  (e^{- \pi/ 2}\zeta)^k 
= \pi \zeta\frac{\p}{\p \zeta}\sum_{m \geq 0} \frac {1}
{1-e^{-(m + 1/2)\pi}\zeta}\\ 
&= \pi \zeta \sum_{m \geq 0}
\frac{e^{-(m+ 1/2)\pi}}{(1-e^{-(m + 1/2)\pi}\zeta)^2} =
\frac{\pi e^{- \pi/ 2} \zeta}{(1-e^{- \pi/ 2}\zeta)^2} +
f(\zeta) \, , 
\end{align*}
where all the series converge absolutely and uniformly on compact sets
in the annulus $A$ and $f$ is holomorphic in a neighborhood of
$\overline{A}$. Thus 
\begin{align*}
g(\zeta)
& = \frac{e^{- \pi/ 2} \zeta}{\pi^2(1-e^{- \pi/2}\zeta)^2} +
\frac{f(\zeta)}{\pi^3} + \frac1{\pi^3} + 
\frac{e^{- \pi/ 2} \zeta^{-1}}{\pi^2(1-e^{- \pi/2}\zeta^{-1})^2} + \frac {f(\zeta^{-1})}{\pi^3} \\ 
& =  \frac{e^{- \pi/ 2} \zeta}{\pi^2(1-e^{- \pi/ 2}\zeta)^2} +
\frac{e^{\pi/ 2} \zeta}{\pi^2(1-e^{\pi/ 2}\zeta)^2} +
\frac{f(\zeta) + 1 + f(\zeta^{-1})}{\pi^3}, 
\end{align*}
which concludes the proof.
\qed
\medskip \\

Now we turn back to the unbounded worm domain $\cW$ via the
biholomorphism $\Phi(z) = (\ell(z),z_2)$,  where $\ell(z) = -i (L(z)
-\log 2)$ and $L(z)$ is given by \eqref{def-L},  and via the
isometric isomorphism
\begin{align*}
T^{-1}: A^2(\cU) &\to A^2(\cW)\\
T^{-1}f(z)& = \frac 1{ iz_1} f\big(\ell(z),z_2\big)\,.
\end{align*}
Recall also that we set $E_\eta(z)=e^{\eta L(z)}$ in \eqref{def-E}.
The next result follows at once from Proposition \ref{cU-kernel}.

\begin{thm} 
The Bergman kernel $K$ of $A^2(\cW)$ can be 
computed at each $(z,w) \in \cW\times \cW$ as
\begin{align}
K(z,w) 
&=( z_1\overline w_1z_2 \overline w_2)^{-1}\sum_{j\in \bbZ}
K_j(\ell(z),\ell(w))\Big( E_{i/2} (z)
z_2\overline{E_{i/2} (w)w_2}\Big)^{j+1} \,.
\end{align}
In particular, when $(z,w) \in
\cW_{\pi/ 2}\times\cW_{\pi/ 2}$, the kernel function takes the form
$$
K(z,w)  =(z_1\overline w_1z_2 \overline w_2)^{-1}\sum_{j\in \bbZ}
K_j\Big(-i\log z_1/2,-i\log w_1/2\Big)
\Big({z_1^{i/2}}z_2\overline{w_1^{i/ 2} w_2} \Big)^{j+1}\, .
$$
\end{thm}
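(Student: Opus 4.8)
The plan is to deduce both formulas from Proposition~\ref{cU-kernel} by pulling back the series for $K_\cU$ along the biholomorphism $\Phi$ of \eqref{Phi-def}, using the functorial behaviour of the Bergman kernel under biholomorphic maps. Writing $J(z)=\det\Phi'(z)$, the isometric isomorphism $T^{-1}\colon A^2(\cU)\to A^2(\cW)$, $T^{-1}f(z)=J(z)\,f(\Phi(z))$ (whose Jacobian factor $J(z)=1/(iz_1)$ was recorded just before the statement), intertwines reproducing kernels, so that
\begin{equation*}
K(z,w)=J(z)\,\overline{J(w)}\;K_\cU\big(\Phi(z),\Phi(w)\big),\qquad z,w\in\cW.
\end{equation*}
I would establish this identity first, either by invoking the standard transformation law for Bergman kernels under biholomorphisms or by the short direct computation with $T^{-1}$ and the reproducing property.

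Next I would evaluate the right-hand side. For the Jacobian, from $E_1=e^{L}$ and $E_1(z)=z_1$ in \eqref{def-E} one has $e^{L(z)}=z_1$ identically on $\cD$, hence $\p_{z_1}L=1/z_1$ and $\p_{z_2}L=0$; thus $\Phi'(z)=\operatorname{diag}(-i/z_1,1)$, $J(z)=1/(iz_1)$, and $J(z)\overline{J(w)}=(z_1\overline w_1)^{-1}$. Substituting the series of Proposition~\ref{cU-kernel} at $\Phi(z)=(\ell(z),z_2)$ and $\Phi(w)=(\ell(w),w_2)$, the prefactor $(z_2\overline w_2)^{-1}$ is unchanged, and inside the series I would simplify the exponential weight: since $\ell(z)=-i(L(z)-\log 2)$ one gets $e^{-\ell(z)/2}=2^{-i/2}E_{i/2}(z)$ and, conjugating, $e^{-\overline{\ell(w)}/2}=2^{i/2}\,\overline{E_{i/2}(w)}$, so the two unimodular constants $2^{\mp i/2}$ cancel and $e^{-\frac12(\ell(z)+\overline{\ell(w)})}z_2\overline w_2=E_{i/2}(z)z_2\cdot\overline{E_{i/2}(w)w_2}$. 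Combining these three facts yields the first formula; the convergence statements (in $L^2(\cW)$ and locally uniformly on $\cW$) transfer from Proposition~\ref{cU-kernel} because $T^{-1}$ is an isometry and because composition with $\Phi$ together with multiplication by the nowhere-vanishing holomorphic factor $J$ preserve both modes of convergence.

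Finally, for the clause concerning $\cW_{\pi/2}\times\cW_{\pi/2}$ I would observe that on $\cW_{\pi/2}$ one has $L(z)=\log z_1$ with the principal branch: indeed $z_1e^{-i\log|z_2|^2}\in\Delta(1,1)$ keeps $\arg\big(z_1e^{-i\log|z_2|^2}\big)\in(-\pi/2,\pi/2)$, while the defining inequality of $\cW_{\pi/2}$ (for the chosen $\eta$) confines $\log|z_2|^2$ to the range in which $\arg\big(z_1e^{-i\log|z_2|^2}\big)+\log|z_2|^2$ still lies in $(-\pi,\pi)$, so that this sum is the principal argument of $z_1$. Then $\ell(z)=-i\log(z_1/2)$ and $E_{i/2}(z)=z_1^{i/2}$ (principal branch), and substituting these into the first formula gives the second. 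I do not anticipate a genuine obstacle here: the theorem is essentially a change of variables applied to Proposition~\ref{cU-kernel}. The only points calling for attention are the branch identity $e^{L}=z_1$ — which is precisely what renders $E_\eta$, and hence the whole formula, unambiguous — together with the bookkeeping that makes the constants $2^{\mp i/2}$ disappear, and, for the last clause, the verification that $L=\log z_1$ throughout $\cW_{\pi/2}$, which uses the specific choice of $\eta$ corresponding to $\mu=\pi/2$.
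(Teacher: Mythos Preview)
Your proposal is correct and is exactly the argument the paper intends: the paper's own proof is the single sentence ``The next result follows at once from Proposition~\ref{cU-kernel},'' and you have faithfully unpacked that sentence via the transformation law $K(z,w)=J(z)\overline{J(w)}\,K_\cU(\Phi(z),\Phi(w))$ with $J=1/(iz_1)$, together with the algebraic identity $e^{-\frac12(\ell(z)+\overline{\ell(w)})}=E_{i/2}(z)\,\overline{E_{i/2}(w)}$. Your observation that the final clause depends on the choice of $\eta$ ensuring $L=\log z_1$ on $\cW_{\pi/2}$ is a fair caveat that the paper leaves implicit.
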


As in the case of $\cU$ we study the boundary behavior of $K$.

\begin{prop}\label{boundarybehavior}
The Bergman kernel $K(z,w)$ of $A^2(\cW)$ extends 
holomorphically in $z$ and antiholomorphically in $w$ near each 
point $(z,w)$ of the boundary of $\cW \times \cW$ except: 
\begin{itemize}
\item[(i)]  when $z_1=0$ or $w_1=0$;
\item[(ii)] when $z_2=0$ or $w_2=0$;
\item[(iii)] when, for some $r \in (0,2]$, 
we have
$$
z_1=r e^{i\log|z_2|^2\pm i\arccos(r/2)}\, ,\quad 
w_1=re^{i\log|w_2|^2\pm i\arccos(r/2)} \quad\text{and}\quad 
\big|\log|z_2|^2 - \log|w_2|^2\big| \leq 2\arccos(r/2)\, .
$$
\end{itemize}
For case (i), we note that there exist a holomorphic function
$H: \cW\times \cW \to \bbC$ with
\begin{align}\label{integrability}
K(z,w) =
\frac{H(z,w)}{z_1\overline w_1z_2 
\overline w_2 \big(\ell(z)-\overline{\ell(w)}\big)^2}
\end{align}
and a holomorphic function $g$ on $A:= \{\zeta : e^{- \pi/2}<|\zeta|<e^{\pi/ 2}\}$
such that:
\begin{itemize}
\item[$(a)$] $H(z,w)$ stays bounded as either $z_1$ or $w_1$ tends to $0$;
\item[$(b)$] if $z_1\to 0$ or $w_1\to 0$ and if 
$E_{i/2} (z)z_2 \overline{E_{i/2}(w) w_2}\to \zeta \in A$ then 
$H(z,w) \to g(\zeta)$;
\item[$(c)$] $g(\zeta) -
[e^{- \pi/ 2} \zeta]/[\pi^2(1-e^{-\pi/ 2}\zeta)^2]
- [e^{\pi/ 2} \zeta]/[\pi^2(1-e^{\pi/ 2}\zeta)^2]$ extends
holomorphically to a neighborhood of $\overline{A}$. 
\end{itemize}
As a consequence, $K$ is singular at all points $(z,w)$ of the
boundary with $z_1=0,z_2 \in \bbC$ or $w_1=0, w_2 \in
\bbC$. 
\end{prop}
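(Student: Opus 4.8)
The plan is to transfer everything we already know about $K_\cU$ from Theorem \ref{cU-kernel-boundary} to $K$ via the biholomorphism $\Phi$ and the isometry $T^{-1}$, tracking carefully how the exceptional set $\Sigma$ and the various auxiliary functions pull back. Since $K(z,w) = (z_1\overline w_1)^{-1} K_\cU(\Phi(z),\Phi(w)) \overline{(\cdots)}$ — more precisely, from the transformation law for Bergman kernels under biholomorphisms, $K(z,w) = \overline{J\Phi(w)}\,J\Phi(z)\,K_\cU(\Phi(z),\Phi(w))$ where $J\Phi(z) = \ell'(z)$ in the first variable times $1$ in the second, and $\ell'(z)$ contributes the factor $1/(iz_1)$ — the holomorphic (resp. antiholomorphic) extendability of $K$ near a boundary point $(z,w)$ is governed by three things: the extendability of $K_\cU$ near $(\Phi(z),\Phi(w))$, and the finiteness of the Jacobian factors $1/(iz_1)$ and $1/(\overline{-i w_1})$. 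First I would make precise the statement that $\Phi$ extends to a biholomorphism between neighborhoods of the relevant boundary points, noting that $\ell(z) = -i(\log(z_1 e^{-i\log|z_2|^2}) - \log 2) + \log|z_2|^2$ extends holomorphically in $z$ wherever $z_1 \neq 0$ and $z_2 \neq 0$, and that this is exactly why cases (i) and (ii) must be excluded — at such points $\Phi$ itself (or its inverse, or the Jacobian) degenerates.

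Next I would identify the pullback of $\Sigma$. Under $\Phi$, a point $(z_1,z_2)\in\p\cW$ with $z_1 = re^{i\log|z_2|^2 + i\theta}$ (where $\theta = \pm\arccos(r/2)$, since the boundary of the fiber is $\{|\theta - \log|z_2|^2| = \arccos(r/2)\}$) maps to $\Phi(z)_1 = -i(\log(re^{i\theta}) - \log 2) + \log|z_2|^2 = -i(\log(r/2) + i\theta) + \log|z_2|^2 = \theta + i\log(2/r) + \log|z_2|^2$, so $\Im\Phi(z)_1 = \log(2/r) = -\log(r/2)$ and $\Re\Phi(z)_1 - \log|z_2|^2 = \theta = \pm\arccos(r/2) = \pm\arccos(e^{-\Im\Phi(z)_1})$ — which is precisely the condition $v = \Im\Phi(z)_1$, $\Re\Phi(z)_1 - \log|z_2|^2 = \pm\arccos(e^{-v})$ defining $\Sigma$ in \eqref{Sigma}, with the same sign for $z$ and $w$, and $|\log|z_2|^2 - \log|w_2|^2| \le 2\arccos(e^{-v}) = 2\arccos(r/2)$. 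Thus the set described in case (iii) is exactly $\Phi^{-1}\times\Phi^{-1}$ applied to $\Sigma$ (for $r\in(0,2]$, i.e. avoiding $z_1=0$), and part (1) of Theorem \ref{cU-kernel-boundary} together with the non-vanishing of the Jacobian factors off cases (i)–(ii) gives the extendability assertion. I would also recall the Remark that $\Sigma\supseteq\Delta$, so case (iii) includes the diagonal singularity of $K$.

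For case (i), I would use part (2) of Theorem \ref{cU-kernel-boundary}: there $K_\cU(Z,W) = G(Z,W)/(Z_2\overline W_2 (Z_1 - \overline W_1)^2)$. Pulling back with $Z=\Phi(z)$, $W=\Phi(w)$, we have $Z_1 = \ell(z)$, $Z_2 = z_2$, and the Jacobian factor $1/(iz_1)\cdot\overline{1/(iw_1)}$; also $Z_1 - \overline W_1 = \ell(z) - \overline{\ell(w)}$. Collecting, $K(z,w) = H(z,w)/(z_1\overline w_1 z_2\overline w_2(\ell(z)-\overline{\ell(w)})^2)$ where $H(z,w) = c\cdot G(\Phi(z),\Phi(w))$ for the appropriate constant $c = |i|^{-2}$ absorbed — and in fact $H = G\circ(\Phi\times\overline\Phi)$ up to such a constant, which I would fix by comparing the two displayed formulas for $K$. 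Then: property $(a)$ follows from property $(a)$ of $G$ once I check that $z_1\to 0$ forces $\ell(z)_1 \to\infty$ (indeed $\Im\ell(z) = \log(2/|z_1|)\to+\infty$, staying in a half-plane $\bU_\eps$ eventually), hence $\Phi(z)_1\to\infty$; property $(b)$ follows from property $(b)$ of $G$ after checking that $E_{i/2}(z)z_2\overline{E_{i/2}(w)w_2}$ is exactly the quantity $e^{-\frac12(\Phi(z)_1+\overline{\Phi(w)_1})}z_2\overline w_2 = e^{-\frac12(\ell(z)+\overline{\ell(w)})}z_2\overline w_2$ — this is a direct computation using $\ell(z) = -i(L(z)-\log2)$, so $e^{-\frac{i}{2}L(z)} e^{\frac i2\log 2} = E_{i/2}(z)\cdot(\text{const})$, and the constant $2^{i/2}\overline{2^{i/2}} = 2^{i/2}2^{-i/2}=1$ cancels; property $(c)$ is literally property $(c)$ of $g$, the same function. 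Finally the "As a consequence" clause follows since at $z_1=0$ we have $\ell(z)-\overline{\ell(w)}$ bounded but $1/z_1$ blowing up while $H$ stays bounded (by $(a)$) and, via $(b)$–$(c)$, generically nonzero — so $K$ genuinely blows up.

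The main obstacle I expect is bookkeeping: verifying the Bergman-kernel transformation law with the correct (complex-conjugated) Jacobian factors and confirming that the constant absorbed into $H$ versus $G$ is exactly $1$ (or at worst a harmless positive constant), and — the genuinely substantive point — checking that $z_1\to 0$ really does send $\Phi(z)_1\to\infty$ \emph{within a half-plane $\bU_\eps$} (not merely to the boundary at infinity in some uncontrolled way), since property $(b)$ of Theorem \ref{cU-kernel-boundary} is only available under that uniformity. This requires noting that $\Re\ell(z) = \arg(z_1 e^{-i\log|z_2|^2}) + \log|z_2|^2$ stays bounded on bounded pieces of $\p\cW$ while $\Im\ell(z) = \log(2/|z_1|)\to+\infty$, so indeed $\Phi(z)_1\to\infty$ in $\bU_\eps$ for any fixed $\eps < \Im\ell(z)$; the case $w_1\to 0$ with $z$ fixed (or in $\overline\cW$) is symmetric. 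The rest is a clean translation of already-proven facts.
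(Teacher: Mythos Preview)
Your approach is the same as the paper's: pull back Theorem \ref{cU-kernel-boundary} through $\Phi$ and the Jacobian factor $1/(iz_1)$, identify case (iii) as $\Phi^{-1}\times\Phi^{-1}(\Sigma)$, and set $H=G\circ(\Phi\times\Phi)$ (the constant is exactly $1$, since $(-i/z_1)\overline{(-i/w_1)}=1/(z_1\overline{w_1})$). Your computations of the pullback of $\Sigma$ and of the identity $e^{-\frac12(\ell(z)+\overline{\ell(w)})}z_2\overline{w_2}=E_{i/2}(z)z_2\overline{E_{i/2}(w)w_2}$ are correct.

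There is one slip in your treatment of the ``As a consequence'' clause. You write that at $z_1=0$ ``we have $\ell(z)-\overline{\ell(w)}$ bounded'', but this is false: $\Im\ell(z)=\log(2/|z_1|)\to+\infty$. What is true is that the \emph{product} $z_1\overline{w_1}z_2\overline{w_2}\big(\ell(z)-\overline{\ell(w)}\big)^2$ tends to $0$ as $z_1\to 0$ with $z_2,w$ bounded, since $|z_1|\,(\log|z_1|)^2\to 0$; this is what the paper invokes. Also, your ``generically nonzero'' for $H$ needs the argument the paper gives: by $(c)$, $g$ extends meromorphically to a neighborhood of $\overline A$ and hence has only finitely many zeros there, so one can choose a circle $|\zeta|=e^t$ free of zeros and approach the boundary point along a sequence on which $E_{i/2}(z)z_2\overline{E_{i/2}(w)w_2}\to\zeta$ with $|\zeta|=e^t$, forcing $H\to g(\zeta)\neq 0$. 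With these two fixes your proof is complete and matches the paper's.
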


\noindent
{\bf Remark.} Case (iii) of Proposition \ref{boundarybehavior} comprises all points $(z,z)$ of the diagonal of $\p\cW\times\p\cW$; but also other points $(z,w)$ of $\p\cW\times\p\cW$, e.g., those such that $z_1= w_1 \in \partial \Delta(0,2)$ and $|z_2|=|w_2|$. See Figure 3 for other cases.
\begin{figure}[h]
  \begin{center}
  \includegraphics[height=6cm]{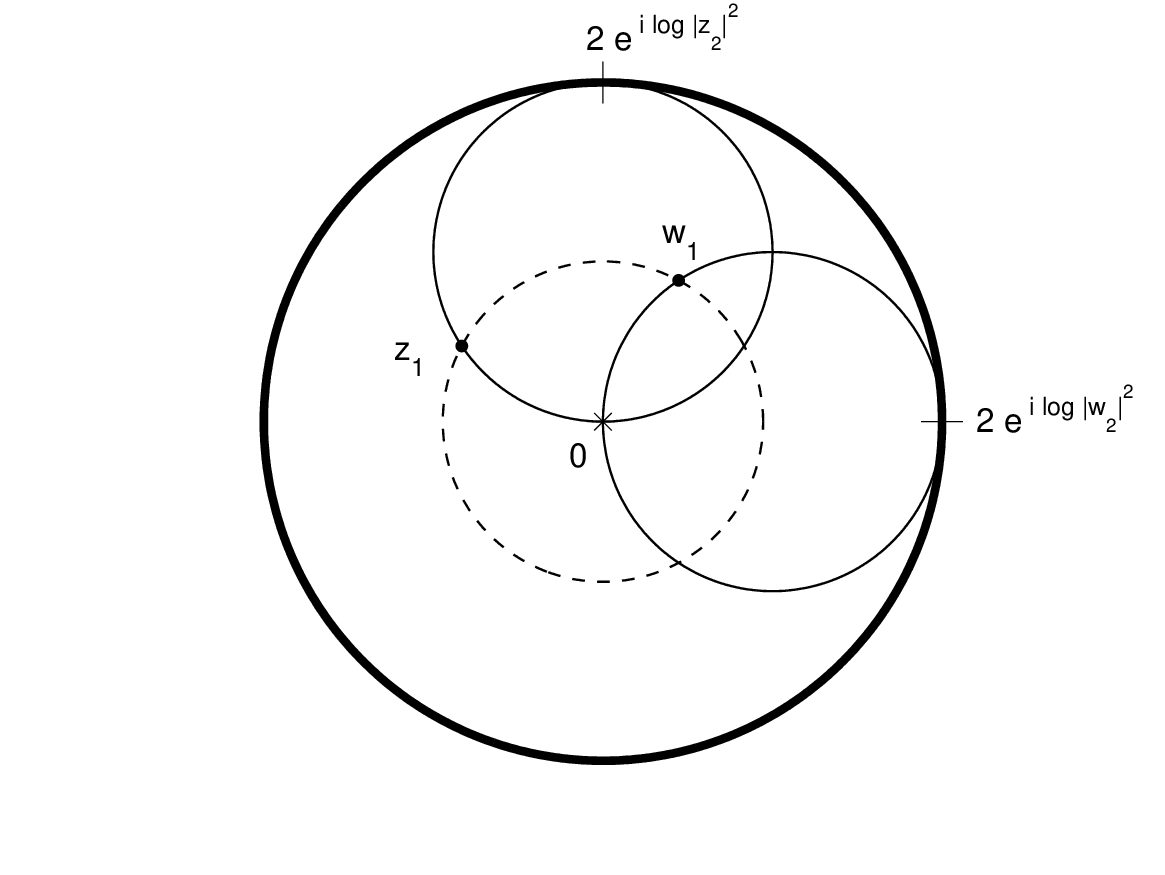}
\end{center}
  \caption{\footnotesize Case (iii) of Prop. \ref{boundarybehavior} regards those points $(z,w)$ such that: (1) the points $z_1,w_1 \in \overline{\Delta(0,2)} \setminus \{0\}$ both lie on some circle $\mathscr{C}=\{\zeta \in \bbC: |\zeta|=r\}$ (dashed) and, respectively, on the boundaries $\mathscr{C}_{z_2},\mathscr{C}_{w_2}$ of the discs $\pi_1(\pi_2^{-1}(z_2)), \pi_1(\pi_2^{-1}(w_2))$ (solid); (2) when circling along $\mathscr{C}$ from point $r$ with an orientation such that $z_1$ is the first point of $\mathscr{C}_{z_2}$ encountered, then $w_1$ is the first point of $\mathscr{C}_{w_2}$ encountered; (3) $\big|\log|z_2|^2 - \log|w_2|^2\big| \leq 2\arccos(r/2)$ (which implies that, but is not equivalent to, $w_1 \in \pi_1(\pi_2^{-1}(z_2))$ or $z_1 \in \pi_1(\pi_2^{-1}(w_2))$).
  }
\end{figure}

\proof[Proof of Proposition \ref{boundarybehavior}]
The first and second statements are direct consequences of Theorem
\ref{cU-kernel-boundary}, taking into account that $\ell$ extends 
holomorphically to a neighborhood of each point $z$ of 
$\overline{\cW}$ except for those with vanishing $z_1$ or $z_2$.

As for the last statement, we begin by noting that the function 
$z_1\overline w_1z_2 \overline w_2 (\ell(z)-\overline{\ell(w)})^2$ tends to
$0$ as $z_1\overline w_1$ approaches $0$ while $z_2 \overline w_2$
stays bounded; and that $|z_1\overline w_1z_2 \overline w_2|
|\ell(z)-\overline{\ell(w)}|^2$ tends to $+\infty$ as $z_2 \overline w_2 \to
\infty$.

Furthermore, since $g$ extends to a meromorphic function on a 
neighborhood of $\overline{A}$, it can only have finitely many zeros 
in $\overline{A}$. Let $t \in (-\pi/2,\pi/2)$ be such that the circle 
$|\zeta| = e^t$ does not include any zero of $g$. For every $(z,w)$ 
with $z_1=0$ or $w_1=0$, one can easily construct a sequence of 
points tending to $(z,w)$ such that the corresponding values of $H$ 
tend to $g(\zeta)$ with $|\zeta| = e^t$ (hence with $g(\zeta) \neq 0$).
\qed
\medskip

\begin{cor}   \sl  
For $\mu \in
(0,\infty]$ and fixed $w \in \cW$,  the following properties hold:
\begin{itemize}
\item[(1)]
$K(\cdot,w)\not\in
L^p(\cW_\mu)$ for any $p>2$; 
\smallskip
\item[(2)]
$K(\cdot,w)\not\in
W^s(\cW_\mu)$ for any $s>0$. 
\end{itemize}
\end{cor}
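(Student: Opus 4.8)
The plan is to deduce from Proposition~\ref{boundarybehavior} a sharp blow-up rate for $K(\cdot,w)$ along the critical annulus $\cA=\{0\}\times\bbC^*$, which lies in the \emph{smooth} part of $\p\cW_\mu$ for every $\mu\in(0,\infty]$, and then to integrate. Fix $w\in\cW$, so that $w_1,w_2\neq0$ and $\Im\ell(w)>0$, and recall from Proposition~\ref{boundarybehavior} the representation
\begin{align*}
K(z,w)=\frac{H(z,w)}{z_1\,\overline w_1\,z_2\,\overline w_2\,\big(\ell(z)-\overline{\ell(w)}\big)^2}\, ,
\end{align*}
where $H$ is bounded near $\{z_1=0\}$ and $H(z,w)\to g(\zeta)$ as $z_1\to0$ with $E_{i/2}(z)z_2\overline{E_{i/2}(w)w_2}\to\zeta$; here $g$ is holomorphic and not identically zero on $A=\{e^{-\pi/2}<|\zeta|<e^{\pi/2}\}$ (the coefficient of $\zeta$ in its Laurent expansion is $\tfrac1{\pi^3}\tfrac{\pi/2}{\sinh(\pi/2)}\neq0$). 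Writing $z_1=re^{i\theta}$ and $\psi=\arg\!\big(z_1e^{-i\log|z_2|^2}\big)\in(-\tfrac\pi2,\tfrac\pi2)$, a direct computation from \eqref{def-L}--\eqref{def-E} gives $\ell(z)=\big(\psi+\log|z_2|^2\big)+i\log(2/r)$ and $\big|E_{i/2}(z)\,z_2\big|=e^{-\psi/2}$, whence
\begin{align*}
\big|E_{i/2}(z)\,z_2\,\overline{E_{i/2}(w)\,w_2}\big|=e^{-\psi/2}\,\big|E_{i/2}(w)\,w_2\big|
\end{align*}
\emph{depends on $z$ only through $\psi$}, while $\Im\ell(z)=\log(2/r)\to+\infty$ and hence $\Re\big(\!-i(\ell(z)-\overline{\ell(w)})\big)=\Im\ell(z)+\Im\ell(w)\to+\infty$ as $r\to0$, and $\big|\ell(z)-\overline{\ell(w)}\big|^2\le C\big(\log(1/r)\big)^2$ for $r$ small.

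Next I would isolate a solid sectorial region near $\cA$ on which $H$ is bounded \emph{below}. Choose $(0,z_2^0)\in\cA$ with $\big|\log|z_2^0|^2\big|<\mu$ (possible since $\mu>0$). The requirement that $\psi_0:=-\arg\!\big(w_1e^{-i\log|w_2|^2}\big)-2t_0$ lie in $(-\tfrac\pi2,\tfrac\pi2)$ confines $t_0$ to a nonempty open subinterval of $(-\tfrac\pi2,\tfrac\pi2)$; since $g\not\equiv0$ has at most countably many zeros in $A$, I may pick such a $t_0$ with the circle $\{|\zeta|=e^{t_0}\}$ free of zeros of $g$, and then $\delta_0>0$ so small that the closed sub-annulus $\overline A_0:=\{e^{t_0-\delta_0}\le|\zeta|\le e^{t_0+\delta_0}\}$ is contained in $A$ and free of zeros of $g$ (so $c_0:=\min_{\overline A_0}|g|>0$), and that $(\psi_0-2\delta_0,\psi_0+2\delta_0)\subset(-\tfrac\pi2,\tfrac\pi2)$. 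Put
\begin{align*}
S=\Big\{(z_1,z_2):\ |z_2-z_2^0|<\tfrac{r_0}{2},\ 0<|z_1|<\rho,\ \arg\!\big(z_1e^{-i\log|z_2|^2}\big)\in(\psi_0-2\delta_0,\psi_0+2\delta_0)\Big\}\, ,
\end{align*}
with $r_0,\rho$ small enough that $S\subset\cW_\mu$ and $S\subset B$, where $B$ is a small ball centered at $(0,z_2^0)$: indeed $\cW_\mu$ coincides with $\cW$ in a neighborhood of $(0,z_2^0)$, and the defining inequality $\cos\psi>|z_1|/2$ holds on $S$. By the displayed modulus identity, $\zeta(z):=E_{i/2}(z)z_2\overline{E_{i/2}(w)w_2}\in\overline A_0$ throughout $S$, and $-i(\ell(z)-\overline{\ell(w)})\to\infty$ inside $\{\Re\lambda\ge\Im\ell(w)\}$ as $|z_1|\to0$; so, after shrinking $\rho$ further, the convergence $H(z,w)\to g(\zeta(z))$ is uniform on $S$ and gives $|H(z,w)|\ge c_0/2$ there. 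Combining with $\big|\ell(z)-\overline{\ell(w)}\big|^2\le C\big(\log(1/|z_1|)\big)^2$ and $\big|z_1\overline w_1z_2\overline w_2\big|\le C'|z_1|$ on $S$, we obtain
\begin{align*}
|K(z,w)|\ \ge\ \frac{c}{|z_1|\,\big(\log(1/|z_1|)\big)^2}\qquad\text{on }S\, .
\end{align*}

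Integrating in the coordinates $(z_2,r,\theta)$ on $S$ (for $z_2$ fixed, $\theta\mapsto\psi$ is a rotation and $dV(z_1)=r\,dr\,d\theta$) gives, for every $p>2$,
\begin{align*}
\int_{\cW_\mu}|K(z,w)|^p\,dV\ \ge\ c_p\int_0^{\rho}\frac{dr}{r^{p-1}\big(\log(1/r)\big)^{2p}}=c_p\int_{\log(1/\rho)}^{\infty}u^{-2p}e^{(p-2)u}\,du=+\infty
\end{align*}
with $c_p>0$; this proves (1). (For $p=2$ the same integral equals $c_2\int^{\infty}u^{-4}\,du<\infty$, consistently with $K(\cdot,w)\in A^2(\cW)\subseteq L^2(\cW_\mu)$.) For (2), note that $B\cap\cW_\mu$ is a bounded domain with smooth boundary, hence a Sobolev extension domain; for every $s>0$ one has $W^s(B\cap\cW_\mu)\hookrightarrow L^{q(s)}(B\cap\cW_\mu)$ for some $q(s)>2$ — e.g. $q(s)=\tfrac{8}{4-2s}=\tfrac{4}{2-s}$ when $0<s<2$ (the real dimension being $4$), and $q(s)=4$ when $s\ge2$ via $W^s\subseteq W^1\hookrightarrow L^4$. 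Were $K(\cdot,w)\in W^s(\cW_\mu)$, its restriction to $B\cap\cW_\mu$ would lie in $L^{q(s)}(B\cap\cW_\mu)$, contradicting $\int_{S}|K(z,w)|^{q(s)}\,dV=+\infty$ (which holds because $q(s)>2$ and $S\subset B$). The step I expect to require most care is the claimed \emph{uniformity} of $H(z,w)\to g(\zeta(z))$ over the whole two-parameter family $S$ of directions: since $H(z,w)=\sum_{j\in\bbZ}f_j\big(\!-i(\ell(z)-\overline{\ell(w)})\big)\,\zeta(z)^{j+1}$, one must estimate the tail of this series uniformly on $S$, i.e. balance the exponential growth $|\zeta(z)^{j+1}|\le e^{(\pi/2)|j+1|}$ against the decay and the rate of convergence of $f_j(\lambda)$ (from the Schwartz-space convergence $\Psi_{2,\lambda}\to\psi_2$ of Lemma~\ref{technical-lemma} and the estimates of Corollary~\ref{ptwise-estimate}) as $\Re\lambda\to+\infty$ — precisely the regime for which those results were established.
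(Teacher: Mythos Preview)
Your argument for (1) is essentially the paper's: the same representation \eqref{integrability}, the same lower bound $|K(z,w)|\gtrsim |z_1|^{-1}\big(\log(1/|z_1|)\big)^{-2}$ on a solid sector abutting the critical annulus, and the same divergent integral for $p>2$. The paper phrases the sector slightly differently and writes the log factor as $\big[(\log(|z_1|/2)+c_1)^2+c_2\big]^{-1}$, but this is cosmetic.

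For (2) you take a genuinely different route. The paper invokes Ligocka's characterization (valid for smooth bounded domains and $s\in(0,\tfrac12)$): $f\in W^s(\cW_\mu)$ iff $\rho^{-s}f\in L^2(\cW_\mu)$, and then checks directly that $\int \rho^{-2s}|K(\cdot,w)|^2$ diverges on the same sector. Your approach instead localizes to $B\cap\cW_\mu$ and uses the Sobolev embedding $W^s\hookrightarrow L^{q(s)}$, $q(s)>2$, reducing (2) to the computation already done for (1). This is a clean and self-contained alternative; it avoids the external reference and handles $\mu=\infty$ without a separate restriction step. One small correction: $B\cap\cW_\mu$ does \emph{not} have smooth boundary (the sphere $\p B$ meets $\p\cW_\mu$ in a corner), but it is Lipschitz, which suffices for the extension property and the embedding.

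Your self-flagged concern about the uniformity of $H(z,w)\to g(\zeta(z))$ over $S$ is legitimate but not a gap. Since $\zeta(z)$ stays in the compact sub-annulus $\overline{A}_0\subset A$ on $S$, a routine compactness argument (extract a subsequence with $\zeta(z^{(n)})\to\zeta_0\in A$, apply Proposition~\ref{boundarybehavior}(b), and use continuity of $g$) upgrades the pointwise convergence to uniformity on $S$. The paper treats this step with the same informality.
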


\proof
We begin by refining our remarks concerning the function $g$ 
that appears in the previous proposition. As we mentioned in the 
previous proof, $g$ can only have finitely many zeros in 
$\overline{A}$. Fix $w \in \cW$ and set $a:=E_{i/2}(w) w_2$. 
For some $-\pi/ 2<\alpha<\beta< \pi/ 2$, the 
function $z \mapsto \big|g\big(E_{i/ 2}(z)z_2\overline a\big)\big|$ 
is bounded from below by a constant for $z_1$ in the sector
$S(e^{i\log|z_2|^2},\eps) =\{re^{i(t+\log|z_2|^2)}:
\alpha<t<\beta, 0<r<\eps\}$ for all $\eps$ small enough
that $S(e^{i\log|z_2|^2},\eps)\subset\Delta(e^{i\log|z_2|^2},1)$. 

Now, for fixed $\mu \in (0,+\infty)$, let us consider the smooth worm
$\cW_\mu$. We recall that a defining function for $\cW_\mu$ is
$\rho(z) = \big|z_1-e^{i\log|z_2|^2}\big|^2-1+\eta_\mu(\log|z_2|^2) =
|z_1|^2 - 2 {\rm Re}\, (z_1e^{-i\log|z_2|^2}) +\eta_\mu(\log|z_2|^2)$,
where $\eta_\mu$ is an appropriately chosen function such that
 $\eta_\mu^{-1}(0)=[-\mu,\mu]$. As a consequence, $\cW_\mu$ always includes
$\bigcup_{-\mu< \log|z_2|^2<\mu}\Delta(e^{i\log|z_2|^2},1)$. 
Notice that
\begin{align*}
& |\ell(z)- \overline{\ell(w)}|^2 = |L(z)+\overline{L(w)}-2\log2|^2 \\
& = \Big( \log(|z_1|/2)  +\log(|w_1|/2)\Big)^2 +  
\Big( \arg\big(z_1e^{-i\log|z_2|^2} \big)  +\log|z_2|^2 
-\arg\big(w_1e^{-i\log|w_2|^2} \big)  -\log|w_2|^2  \Big)^2 \\
&  \le  \big( \log(|z_1|/2)  +c_1\big)^2 +c_2\,,
\end{align*}
where $c_1 = \log (|w_1|/2)<0$ and $c_2\le (\pi+2\mu)^2$.

Owing to formula \eqref{integrability}, there exist $\eps,C>0$
so that, for all $z \in \bigcup_{-\mu<
  \log|z_2|^2<\mu}S(e^{i\log|z_2|^2},\eps)\times\{z_2\}$, 
\begin{align*}
|K(z,w)| \geq \frac{C}{|z_1| |\ell(z)-\overline{\ell(w)}|^2}  \geq \frac{C}{|z_1|}
 \frac{1}{ \big(\log (|z_1|/2) + c_1\big)^2+c_2} \,.
\end{align*}

Therefore
\begin{align*}
\Vert K(\cdot,w)\Vert_{L^p(\cW_\mu)}^p &\geq \int_{-\mu<
  \log|z_2|^2<\mu} \int_{S(e^{i\log|z_2|^2},\eps)}
\frac{C^p}{|z_1|^p \big[ \big(\log (|z_1|/2) 
+ c_1\big)^2+c_2\big]^{p}} \, dV(z_1)dV(z_2) \\ 
&=\int_{-\mu< \log|z_2|^2<\mu} \int_{S(1,\eps)}
\frac{C^p}{|\zeta|^p \big[ \big(\log (|\zeta|/2) 
+ c_1\big)^2+c_2\big]^{p}} \, dV(\zeta)dV (z_2)\\ 
&=C_\mu
\int_{0}^{\eps} \frac{1}{r^{p-1} \big[\big(\log (r/2) +c_1\big)^2
 + c_2\big]^{p}}\,  dr \, , 
\end{align*}
where the inner integral diverges when $p>2$.

The last statement will be proved for all $s>0$ if we can prove it for
all $s \in (0, \frac 1 2)$. In the latter case, according to
\cite{Li}, the function $K(\cdot,w)$ belongs to the Sobolev space
$W^s(\cW_\mu)$ if and only if $\rho(\cdot)^{-s}K(\cdot,w)$ is in
$L^2(\cW_\mu)$. But 
\begin{align*}
&\Vert \rho(\cdot)^{-s}K(\cdot,w)\Vert_{L^2(\cW_\mu)}^2\\
&\geq \int_{-\mu< \log|z_2|^2<\mu}
\int_{S(e^{i\log|z_2|^2},\eps)}  \frac{C^2}{\big||z_1|^2 - 2 \Re
  (z_1e^{-i\log|z_2|^2})\big|^{s}|z_1|^2 \big[ \big(\log (|z_1|/2) +
  c_1\big)^2+c_2\big]^2}
\, dV(z_1)dV (z_2) \\ 
&=\int_{-\mu< \log|z_2|^2<\mu} \int_{S(1,\eps)}  \frac{C^2}{\big||\zeta|^2 - 2\Re
  (\zeta)\big|^{s}|\zeta|^2 \big[ \big(\log (|\zeta|/2) +
  c_1\big)^2+c_2\big]^2}\,
dV(\zeta)dV (z_2)\\ 
&= C\int_{\alpha}^{\beta} \int_{0}^{\eps}  \frac{1}{\big|r
  -2 \cos t\big|^{s} r^{1+s} \big[\big(\log (r/2) +c_1\big)^2
 + c_2\big]^2}\,  dr dt 
\end{align*}
where the inner integral diverges when $s>0$, for all $t \in (\alpha,\beta)$.
\qed
\medskip \\

\proof[Proof of Theorem \ref{irregularity}]    
We saw in the previous theorem that $K_w = K(\cdot,w)$ does not belong
to $W^s(\cW)$ nor to $L^p(\cW)$ for any $s>0$ or $p>2$. Since $K_w$ can be
obtained as the projection $\cP(\chi_w)$ of a smooth cut-off
function $\chi_w \in C^\infty_0$ supported in a compact neighborhood
of $w$ (see \cite{Ker}), the inclusion $\cP(W^s(\cW)) \subseteq W^s(\cW)$
implies $s\leq0$ and $\cP(L^p(\cW)) \subseteq L^p(\cW)$ implies
$p\leq 2$. 

We complete the proof by showing that $\cP(L^p(\cW)) \subseteq
L^p(\cW)$ implies $p\geq 2$. This part of the proof makes use of the duality 
between $L^p$ and $L^{p'}$ with $\frac{1}{p} + \frac{1}{p'} =1$. We observe 
that, since $\cP f(w) = \la f,K_w\ra$, 
\begin{align*}
\Vert K_w\Vert_{L^{p'}} &=\sup_{\Vert f\Vert_{L^p}=1} \big|
  \int_{\cW} f(z)\overline{K_w(z)}\, dV(z) \big|  =
\sup_{\Vert f\Vert_{L^p}=1} |\cP f(w)|\\ 
&\leq \sup_{\Vert f\Vert_{L^p}=1} \big|\frac 1 {V(B)}
  \int_B \cP f(z)\, dV(z)\big| \leq C \sup_{\Vert
  f\Vert_{L^p}=1} \Vert \cP f \Vert_{L^p} \leq C', 
\end{align*}
which implies $p' \leq 2$, hence that $p\geq2$ as desired.
\qed

\section{Concluding Remarks}

We have studied the worm now for several years and met with some
success in analyzing the unbounded (sometimes non-smooth)
worm. See for instance \cite{KrPe2},
\cite{KrPe3}, \cite{KrPe}. Our ultimate goal, however, is to study the
original worm domain $\cW_\mu$ of Diederich and Forn\ae ss
\cite{DFo1}.

The approach used in the present paper allows, even in the case of
 $\cW_\mu$, to reduce the study of the Bergman space of to a family 
 of weighted Bergman spaces on a planar domain. In this case the 
 planar domain is not a half-plane anymore and the weight depends 
 on both real variables, two facts which prevent from 
 computing the kernel with the technique used for $\cW$. However, 
 the reduction to a planar domain may shed some light on the 
 challenging problem of writing down a complete system for the 
 Bergman space of $\cW_\mu$. We intend to explore these matters 
 in a forthcoming paper.

We also intend to apply the approach used in the present paper to the 
higher-dimensional version of the worm domain introduced and studied 
by Barrett and S.~\c{S}ahuto\u{g}lu in \cite{BaSa}. Namely, for $n\ge3$ 
they defined the domain
\begin{equation}\label{higher-dim-worm}
\Omega_{\alpha\beta} 
= \big\{ (z_1, z', z_n) \in \bbC^n : \, r(z_1, z', z_n)< 0\big\}
\end{equation}
where
$$
r(z_1, z', z_n) = \big|z_1-e^{i\alpha\log|z_n|^2}\big|^2 +|z'|^2
-1+\sigma(|z_n|^2-\beta)+\sigma(1-|z_n|^2) \, ,
$$
$z_1,z_n\in \bbC$, $z'\in \bbC^{n-2}$, $\alpha>0$, $\beta>1$ and
$\sigma(t)=M\chi_{(0,+\infty)}(t) e^{-1/t}$, for some $M>0$.  They
proved that the Bergman projection on $\Omega_{\alpha\beta}$ is
irregular on the Sobolev space $W^{s,p}(\Omega_{\alpha\beta})$  
when $1\le p<\infty$ and
$s\ge \frac{\pi}{2\alpha\log\beta}+ n\big(\frac1p-\frac12\big)$.
Here  $W^{s,p}(\Omega_{\alpha\beta})$  denotes the space of 
functions whose derivatives up to order $s$ are $L^p$-integrable.
In particular, our approach may apply to study the unbounded 
domain obtained from $\Omega_{\alpha\beta}$ by letting 
$\beta\to+\infty$.
\bigskip \bigskip \\

\bibliography{worm-mathscinet}
\bibliographystyle{amsalpha}
\vskip12pt

\end{document}